\documentclass[12pt,reqno]{amsart}
\usepackage{latexsym,amsmath,amssymb, amsthm, mathscinet}
\usepackage{cases, verbatim}

\usepackage{amsfonts,amsmath,amsthm, amssymb}
\usepackage{cases}
\usepackage[usenames]{color}
\usepackage{enumerate}

\newtheorem{thm}{Theorem}[section]

\newtheorem{defn}[thm]{Definition}

\newtheorem{ex}[thm]{Example}
\newtheorem{lem}[thm]{Lemma}
\newtheorem{cor}[thm]{Corollary}
\newtheorem{prop}[thm]{Proposition}

\numberwithin{equation}{section}

\newcommand{\N}{\mathbb{N}}

\newcommand{\R}{\mathbb{R}}

\newcommand{\T}{\mathbb{T}}

\newcommand{\BUC}{{\rm BUC\,}}

\newcommand{\ep}{\varepsilon}

\newcommand{\ol}{\overline}

\newcommand{\diam}{{\rm diam}\,}

\begin{document}
\title[A PDE formulation of Lyapunov stability]{A PDE formulation of Lyapunov stability for contact-type Hamilton--Jacobi equations}

\author{Panrui Ni}
\address[P. Ni]{
Department 1: Department of Mathematics, School of Fundamental Science and Engineering, Waseda University, Tokyo 169-8555, Japan; Department 2: Shanghai Center for Mathematical Sciences, Fudan University, Shanghai 200438, China}
\email{panruini@gmail.com}

\author{Jun Yan}
\address[J. Yan]{
School of Mathematical Sciences, Fudan University, Shanghai 200433, China}
\email{yanjun@fudan.edu.cn}

\makeatletter
\@namedef{subjclassname@2020}{\textup{2020} Mathematics Subject Classification}
\makeatother

\date{\today}
\keywords{Contact-type Hamilton--Jacobi equations; Viscosity solutions; Lyapunov stability; Large-time behavior}
\subjclass[2020]{35F21, 37J51, 35B40}

\begin{abstract}
We study the Lyapunov stability of stationary solutions to contact-type Hamilton--Jacobi equations on a compact manifold. Previous works typically assume $C^3$ Tonelli Hamiltonians and characterize stability in terms of Mather measures. In this paper, we consider continuous, convex and coercive Hamiltonians and establish verifiable PDE-type criteria for both stability and instability. In particular, the dynamical conditions involving Mather measures are replaced by conditions expressed in terms of the critical value of the Hamiltonian and viscosity subsolutions. This provides a PDE-based framework for stability analysis and reveals connections with various asymptotic behaviors of viscosity solutions.
\end{abstract}

\maketitle



\section{Introduction}

In this paper, we consider the following contact-type Hamilton--Jacobi equation
\begin{equation}\label{ee}\tag{eHJ}
  \left\{
   \begin{aligned}
   &u_t(x,t)+H(x,Du(x,t),u(x,t))=0\quad \text{for}\ (x,t)\in M\times(0,+\infty),
   \\
   &u(x,0)=\varphi(x) \quad\text{for} \ x\in M,
   \end{aligned}
   \right.
\end{equation}
where $M$ is a compact, connected, smooth manifold without boundary, $H:T^*M\times\R\to\R$ is the Hamiltonian, and $\varphi\in C(M)$. Throughout the paper, all solutions are understood in the viscosity sense, and the term “viscosity” will often be omitted. The large-time behavior of $u(x,t)$ can be complicated when the monotonicity of the map $u\mapsto H(x,p,u)$ fails; see \cite{WYZ1,WYZ2}. Such non-monotone dependence in $u$ appears, for example, in certain models of dislocation dynamics; see \cite{IM,MNT}. For $t>0$, let $T^-_t:C(M)\to C(M)$ be the nonlinear operator defined in \eqref{T-} below. Then the map $(x,t)\mapsto T^-_t\varphi(x)$ gives the solution of \eqref{ee}. In this way, $(T^-_t,C(M))$ can be regarded as an infinite-dimensional dynamical system. A fundamental notion in dynamical systems is the Lyapunov stability of fixed points. In \cite{RWY}, the authors investigate the Lyapunov stability of fixed points of $T^-_t$. These fixed points correspond to solutions of
\begin{equation}\label{se}\tag{sHJ}
H(x,Du(x),u(x))=0\quad \text{for }x\in M.
\end{equation}
The existence of such solutions is well established; see \cite{JMT,WWY1}. We now recall the definition of Lyapunov stability in the present setting.
\begin{defn}\cite[Definition 1.1]{RWY}.
Let $u\in C(M)$ be a solution of \eqref{se}.
\begin{itemize}
\item [(1)] The function $u$ is said to be Lyapunov stable if for any $\ep>0$, there exists $\delta>0$ such that for any $\varphi\in C(M)$ satisfying $\|\varphi-u\|_\infty\leq \delta$, one has
\[\|T^-_t\varphi-u\|_\infty<\ep\quad \text{for all }t>0.\]
Otherwise, $u$ is said to be Lyapunov unstable.

\item [(2)] The function $u$ is said to be asymptotically stable if it is Lyapunov stable and there exists $\delta>0$ such that
\[\lim_{t\to+\infty}\|T^-_t\varphi-u\|_\infty=0\]
for any $\varphi\in C(M)$ with $\|\varphi-u\|_\infty<\delta$. If $\delta$ can be taken as $+\infty$, then $u$ is called asymptotically stable.
\end{itemize}
\end{defn}
In \cite{RWY}, the Hamiltonian $H(x,p,u)$ is assumed to be $C^3$ Tonelli, that is, strictly convex and superlinear in $p$. Under conditions formulated in terms of Mather measures associated with contact Hamiltonian systems, the Lyapunov stability and instability of fixed points of $T^-_t$ are characterized. These assumptions arise naturally from a dynamical systems viewpoint. Related results in the smooth contact Hamiltonian setting can be found in \cite{XYZ}. The main contributions of this paper can be summarized as follows:
\begin{itemize}
\item[(1)] to establish stability results under PDE-type assumptions, extending the theory initiated in \cite{RWY};
\item[(2)] to replace conditions expressed in terms of Mather measures by conditions formulated from a PDE viewpoint. In this way, it is no longer necessary to consider contact Hamiltonian flows and the associated Mather measures.
\end{itemize}
More precisely, we assume that the Hamiltonian takes the form
\begin{equation}\label{H=}
H(x,p,u)=G(x,p)+W(x,u),
\end{equation}
and satisfies the following conditions:
\begin{itemize}
\item[(A1)] The function $G(x,p)$ is continuous, convex in $p$, and \[\lim_{|p|\to+\infty}\inf_{x\in M}G(x,p)=+\infty.\]
\item[(A2)] Both $W(x,u)$ and its partial derivative $\partial_u W(x,u)$ are continuous. Moreover, there exists a constant $\Lambda>0$ such that \[|\partial_u W(x,u)|\leq \Lambda\quad \text{for all }(x,u)\in M\times\R.\]
\end{itemize}
It is natural to consider continuous Hamiltonian $H(x,p,u)$ in this setting. As will be shown in Lemma \ref{>A} below, we introduce the Hamiltonian \[H_-(x,p):=G(x,p)+W(x,u_-(x)),\] where $u_-(x)$ is a solution of \eqref{se} and may fail to be smooth. Moreover, from an ODE perspective, it is also natural to assume that $\partial_u W(x,u)$ is continuous and satisfies $|\partial_u W(x,u)|\leq \Lambda$. The latter is analogous to the standard Lipschitz-type condition in ODE theory, which ensures uniqueness of solutions. The differentiability of $W$ with respect to $u$ plays a role analogous to a $C^1$-regularity assumption in ODE theory, which is essential for deriving linearized equations and classifying fixed points.

To investigate the Lyapunov stability and instability of fixed points of $T^-_t$ under PDE-type assumptions, the main difficulty arises from the low regularity of the Hamiltonian, which prevents the construction of a contact Hamiltonian flow and, in particular, the derivation of estimates on the velocities of minimizing curves. To address this difficulty, we restrict ourselves to Hamiltonians of the form \eqref{H=}. The treatment of general Hamiltonians is left for future work. The proofs rely on techniques developed in \cite{NWY,NW}.

When $H(x,p,u)$ is independent of $u$, the solution $u(x,t)$ of \eqref{ee} converges uniformly, as $t\to+\infty$, to a solution of \eqref{se}; see \cite{F2,NR}. This convergence result has been further extended to second-order equations and to weakly coupled systems of Hamilton--Jacobi equations \cite{CGM,CLL}. Extending the results of the present paper to these more general settings will be addressed in future work. When $H(x,p,u)$ is nondecreasing in $u$, the solution $u(x,t)$ still converges uniformly to a solution of \eqref{se} as $t\to+\infty$; see \cite{S}. In contrast, when $H(x,p,u)$ is decreasing in $u$, such convergence generally fails, and the large-time behavior becomes more complicated; see \cite{WWY,WYZ1,WYZ2}. We also refer the reader to \cite{NW,XYZ} for further results on the large-time behavior of \eqref{ee} in the absence of monotonicity.

In \cite{CF,Da2}, the authors study the vanishing discount problem under assumptions involving the integral of $\partial_u H$ with respect to all Mather measures. These conditions are closely related to \eqref{A} derived in the present paper. Their results show that such assumptions play an important role in the asymptotic analysis of viscosity solutions for the vanishing discount problem. Moreover, in \cite[Theorems 1.7 and 1.8]{CF}, such conditions are crucial for the uniqueness of solutions, and in \cite[Theorem 3.1]{Da2}, they are shown to be important for the existence of solutions. The results of the present paper provide a PDE perspective on these assumptions. We also note that in \cite{NYZ}, the authors use large-time behavior to establish the convergence of viscosity solutions in the vanishing discount problem, further highlighting the connection with the main topic of this paper. In addition, we replace the condition on the integral of $\partial_u H$ with respect to all Mather measures by using the critical value, or equivalently, the Mather $\alpha$-function on cohomology classes introduced in \cite{M}. This approach reveals a deeper connection between Aubry--Mather theory and the asymptotic analysis of viscosity solutions of Hamilton--Jacobi equations. Finally, in Proposition \ref{homo}, we also employ large-time behavior to obtain a convergence rate for a homogenization problem. These results illustrate connections between Aubry--Mather theory and various aspects of asymptotic analysis of viscosity solutions, including large-time behavior, the vanishing discount problem, and homogenization.

\medskip

We define the Lagrangian $L:TM\to\R\cup\{+\infty\}$ associated with $G$ by
\[L(x,v):=\sup_{p\in T^*_xM}\big(\langle p,v\rangle _x-G(x,p)\big),\]
where $\langle \cdot,\cdot\rangle _x$ denotes the dual pairing between $T_xM$ and $T^*_xM$. It is well known that $L(x,v)$ is lower semicontinuous on $TM$ and continuous on its domain \[{\rm Dom}(L):=\{(x,v)\in TM:\ L(x,v)<+\infty\}.\] Moreover, $L(x,v)$ is uniformly superlinear in $v$ and may take the value $+\infty$.

Recall that there exists a unique constant $c$ such that
\begin{equation}\label{G=c}
G(x,Du(x))=c\quad \text{for }x\in M
\end{equation}
admits solutions. This constant is called the critical value of $G$ and is denoted by $c(G)$. It can also be characterized by
\[c(G):=\inf\big\{c\in\R:\ G(x,Du)=c \text{ admits a viscosity subsolution}\big\}.\]

\medskip

Let $u_-\in C(M)$ be a solution of \eqref{se}. To study Lyapunov stability and instability of the solution $u_-$, we define
\[c(\ep):=c\big(H(x,p,u_-(x)+\ep)\big),\]
that is, the critical value associated with the Hamiltonian $H(x,p,u_-(x)+\ep)$. To obtain Lyapunov stability, we impose the following assumptions. These conditions can be viewed as Lyapunov-type differential inequalities along the associated Hamilton--Jacobi flow, and will later be complemented by analogous conditions for instability and uniqueness.
\begin{prop}\label{A3}
Assume {\rm (A1)--(A2)}. Then $c(\ep)$ is Lipschitz continuous, and the left derivative $D^-c(\ep)|_{\ep=0}$ exists. Consider the following conditions:
\begin{itemize}
\item[(A3)$_1$] There exists a constant $\zeta>0$ such that
\[c\big(H(x,p,u_-(x))-\zeta \cdot \partial_u W(x,u_-(x))\big)<0.\]

\item[(A3)$_2$] There exist a constant $\zeta>0$ and a Lipschitz continuous function $w(x)$ such that
\[H(x,Dw,u_-(x))-\zeta \cdot \partial_u W(x,u_-(x))<0 \quad \text{for a.e. } x\in M.\]

\item[(A3)$_3$] The left derivative satisfies $D^-c(\ep)|_{\ep=0}>0$.
\end{itemize}
Then {\rm (A3)$_1$} and {\rm (A3)$_2$} are equivalent, and each of them implies {\rm (A3)$_3$}.
\end{prop}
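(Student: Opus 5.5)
We will work throughout with the following facts about the critical value. For a continuous Hamiltonian $K(x,p)$, convex and coercive in $p$, $c(K)$ is the infimum of those $c$ for which there is a Lipschitz $w$ with $K(x,Dw)\le c$ a.e. Subsolutions are automatically Lipschitz by coercivity, and for convex $K$ the a.e. and viscosity notions of subsolution coincide. Moreover, $c(K)$ is monotone in $K$ and $1$-Lipschitz with respect to the sup norm: if $K_1\le K_2+a$, then $c(K_1)\le c(K_2)+a$. Write
\[
H_\ep(x,p):=G(x,p)+W(x,u_-(x)+\ep),\qquad c(\ep)=c(H_\ep).
\]
By (A2) we have $\|H_\ep-H_{\ep'}\|_\infty\le \Lambda|\ep-\ep'|$, so $c(\ep)$ is $\Lambda$-Lipschitz.

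\emph{Existence of $D^-c(0)$.} We use that $c$ is a min-max quantity:
\[
c(\ep)=\inf_{w\in \mathrm{Lip}}\ \operatorname*{ess\,sup}_x H_\ep(x,Dw).
\]
Dually, $c(\ep)=\max_{\mu}\big(-\int L_\ep\,d\mu\big)$ over closed measures $\mu$, where $L_\ep(x,v)=L(x,v)-W(x,u_-+\ep)$. Since $W$ enters additively, this gives
\[
c(\ep)=\max_\mu\Big(-\int L\,d\mu+\int W(x,u_-(x)+\ep)\,d\mu\Big).
\]
This is a maximum of functions of $\ep$, each $C^1$ with derivative $\int \partial_uW(x,u_-+\ep)\,d\mu$. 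A Danskin-type argument then applies: the maximizing set is weak-$*$ compact, and upper semicontinuity comes from the superlinearity of $L$ and lower semicontinuity of $\int L\,d\mu$. It yields one-sided derivatives, and in particular
\[
D^-c(0)=\min_{\mu\in\mathfrak M_0}\int \partial_uW(x,u_-(x))\,d\mu,
\]
where $\mathfrak M_0$ is the set of maximizers at $\ep=0$, i.e. the Mather measures. If one prefers to avoid closed measures, the same result can be obtained by a direct PDE convexity argument on subsolutions, but the duality route is the one I will try first. The main obstacle is justifying this duality for a merely continuous $G$ with $L$ possibly infinite, namely compactness of the minimizing measures and the Danskin step. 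To handle it I would approximate, or directly invoke the known formula $c(K)=-\min_\mu\int L_K\,d\mu$ over closed probability measures, which holds for continuous convex coercive $K$.

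\emph{(A3)$_1\Leftrightarrow$(A3)$_2$.} Set $K(x,p):=H_0(x,p)-\zeta\,\partial_uW(x,u_-(x))$. This $K$ is continuous, convex and coercive. If $c(K)<0$, choose $c(K)<c<0$; by the characterization above there is a Lipschitz $w$ with $K(x,Dw)\le c<0$ a.e., which is (A3)$_2$. Conversely, suppose (A3)$_2$ holds with a Lipschitz $w$. The condition is only a.e. and strict, so we need a uniform gap. Take a sup-convolution or mollification $w_\delta$ of $w$. By convexity of $K$ in $p$ (Jensen) and uniform continuity of $K$ on compact sets,
\[
K(x,Dw_\delta)\le \int K(y,Dw(y))\rho_\delta\,dy+o(1)<0,
\]
but this is not yet uniform. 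To get uniformity, use instead that the set of $p$ with $K(x,p)<0$ is open and convex; I expect the cleanest route is via the dual formula. Since $-\int L_K\,d\mu\le \operatorname*{ess\,sup}K(x,Dw)$, we get $c(K)\le \sup_x K(x,Dw)$, but the supremum may equal $0$. Better: for each closed $\mu$, since $K(x,Dw)<0$ a.e. and by the Fenchel inequality,
\[
-\int L_K\,d\mu<0 .
\]
The maximum over $\mu$ is attained, so $c(K)<0$. Strictness at a.e. point passes to each measure after smoothing $w$ along $\mu$; this is a technical point that I would handle by the standard approximation of closed measures, or alternatively by a compactness argument on the sublevel sets.

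\emph{(A3)$_1\Rightarrow$(A3)$_3$.} Take any Mather measure $\mu\in\mathfrak M_0$. Then $-\int L_{H_0}\,d\mu=c(0)=0$, since $u_-$ solves \eqref{se}. Using the dual formula,
\[
0>c(K)\ge -\int L_{H_0}\,d\mu+\zeta\int \partial_uW(x,u_-)\,d\mu=\zeta\int\partial_uW(x,u_-)\,d\mu .
\]
Hence $\int\partial_uW(x,u_-)\,d\mu>0$ for every $\mu\in\mathfrak M_0$. By compactness of $\mathfrak M_0$ the minimum is positive, so $D^-c(0)>0$ by the formula above.

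Alternatively, avoiding measures: from (A3)$_2$ one would compare $H_{-\ep}$ with $H_0$ via the estimate
\[
W(x,u_-(x)-\ep)\le W(x,u_-(x))-\ep\,\partial_uW(x,u_-(x))+o(\ep),
\]
and combine with the convex combination $(1-\theta)v+\theta w$ of a solution $v$ of $H_0=0$ and $w$. This gives $c(-\ep)\le -\kappa\ep+o(\ep)$ with $\kappa>0$, hence $D^-c(0)\ge\kappa>0$. I would present this PDE route as the main argument, with $\theta=\ep/\zeta$-type scaling chosen so that
\[
(1-\theta)\cdot 0+\theta\big(\zeta\,\partial_uW\big)=\ep\,\partial_uW .
\]
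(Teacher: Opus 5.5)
\textbf{Gap: the step {\rm (A3)}$_2\Rightarrow${\rm (A3)}$_1$ fails, and so does {\rm (A3)}$_2\Rightarrow${\rm (A3)}$_3$.}

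You claim $-\int L_K\,d\mu<0$ for every closed measure $\mu$. This cannot be settled by ``smoothing $w$ along $\mu$''. Closed measures, Mather measures included, can be carried by Lebesgue-null sets, for instance a Dirac mass on the zero section. On such a set an a.e.\ inequality for $K(x,Dw(x))$ gives no information.

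In fact the implication is false under the literal a.e.\ reading. Take $M=\R/2\pi\mathbb Z$, $G(x,p)=|p|^2$, $W(x,u)=g(x)u$ with $g(x)=1-\cos x$, and $u_-\equiv 0$, which solves \eqref{se}.
\begin{itemize}
\item With $w\equiv 0$ and any $\zeta>0$ you get $H(x,0,0)-\zeta\,\partial_uW(x,0)=-\zeta g(x)<0$ for all $x\neq 0$. So {\rm (A3)}$_2$ holds.
\item Yet $c(|p|^2-\zeta g)=0$ for every $\zeta>0$. A Lipschitz $w$ with $|Dw|^2\le c+\zeta g$ a.e.\ and $c<0$ would need $|Dw|^2<0$ near $x=0$. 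Correspondingly, $\delta_{(0,0)}$ is closed and $-\int L_K\,d\delta_{(0,0)}=0$. So {\rm (A3)}$_1$ fails.
\item Since $c(\ep)=\max_x \ep g(x)=2\max\{\ep,0\}$, you get $D^-c(0)=0$, so {\rm (A3)}$_3$ fails too.
\end{itemize}

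Your convex-combination alternative has the same defect. With only a.e.\ strictness it yields $c(-\ep)\le o(\ep)$, that is, $D^-c(0)\ge 0$ and nothing more.

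The paper's own argument has the identical hole. It applies Lemma \ref{lem:G=c} ``for a.e.\ $x$ in the projected Aubry set'', which is vacuous when that set is null; in this example the set is $\{0\}$.

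\textbf{What can be proved.} The version of (A3)$_2$ with a uniform gap, $H(x,Dw,u_-)-\zeta\,\partial_uW(x,u_-)\le-\eta<0$ a.e., gives $c(K)\le-\eta$ at once from the inf-characterization, with no duality needed.

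For {\rm (A3)}$_1\Rightarrow${\rm (A3)}$_3$, your PDE route is correct once you use the uniform gap that {\rm (A3)}$_1$ supplies. It is a measure-free alternative to the paper's Lemmas \ref{>A} and \ref{D-c}. Take $w$ with $K(x,Dw)\le c(K)<0$ a.e., set $\theta=\ep/\zeta$ and $z=(1-\theta)u_-+\theta w$. Convexity and the expansion of $W$ give $H_{-\ep}(x,Dz)\le \frac{\ep}{\zeta}c(K)+\omega(\ep)\ep$ a.e. Hence $D^-c(0)\ge -c(K)/\zeta>0$.

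\textbf{The other parts are sound.}
\begin{itemize}
\item Your Lipschitz bound via $c(K_1)\le c(K_2)+\|K_1-K_2\|_\infty$ is correct, and more direct than the paper's discounted-approximation and comparison argument.
\item The Danskin argument for $D^-c(0)$ is essentially the paper's proof of Lemma \ref{D-c}.
\item Your proof of {\rm (A3)}$_1\Rightarrow${\rm (A3)}$_2$ matches the paper's.
\item One sign needs fixing. $L_K=L_{H_0}+\zeta\,\partial_uW(x,u_-(x))$, so for a Mather measure $-\int L_K\,d\mu=-\zeta\int\partial_uW\,d\mu$. As displayed, your inequality would give $\int\partial_uW\,d\mu<0$, the opposite of what you conclude.
\end{itemize}
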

\begin{thm}\label{thm1}
Let $u_-\in C(M)$ be a solution of \eqref{se}. Assume {\rm (A1)--(A2)} and one of {\rm (A3)$_1$--(A3)$_3$}. Then $u_-$ is locally asymptotically stable. More precisely, there exists $\Delta>0$ such that for any $\varphi\in C(M,\mathbb{R})$ with $\|\varphi-u_-\|_\infty\le \Delta$, we have
\[\limsup_{t\to\infty}\frac{\ln \|u(\cdot,t)-u_-(\cdot)\|_\infty}{t}\leq -A\]
for some constant $A>0$ depending on $H$, where $u(x,t)$ denotes the solution of \eqref{ee}.
\end{thm}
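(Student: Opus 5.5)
By Proposition \ref{A3} it is enough to work under (A3)$_2$, since (A3)$_1$ and (A3)$_2$ are equivalent and each implies (A3)$_3$. The condition (A3)$_3$ alone would require a separate route through critical values; I say more about it below. So fix $\zeta>0$ and a Lipschitz $w$ with
\[
G(x,Dw)+W(x,u_-(x))-\zeta\,\partial_uW(x,u_-(x))<0 \quad \text{a.e.}
\]
By compactness and a standard mollification argument (convexity of $G$), I may assume the left side is $\le -\eta<0$ a.e. for some $\eta>0$.

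The strategy is to build explicit exponentially decaying super- and subsolutions of \eqref{ee} that sandwich $u_-$, and then apply the comparison principle for \eqref{ee}. Comparison holds because $W$ is Lipschitz in $u$ by (A2); after the change $u\mapsto e^{-\Lambda' t}u$ the equation becomes monotone.

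\textbf{Supersolution.} For small $\theta>0$ and $\lambda\in(0,1)$, consider
\[
v^+(x,t)=u_-(x)+\theta e^{-At}\Big(1+\frac{\lambda\,(w(x)-\min w)}{\zeta}\Big)-\theta e^{-At}\cdot(\text{const}).
\]
A cleaner version uses a convex combination at the level of gradients,
\[
v=u_-+\theta e^{-At}\phi, \qquad \phi=\zeta^{-1}(w-u_-)+K,
\]
so that $Dv=(1-s)Du_-+sDw$ with $s=\theta e^{-At}/\zeta$ small. Convexity of $G$ then gives
\[
G(x,Dv)\le (1-s)G(x,Du_-)+sG(x,Dw).
\]
Using the equation $G(x,Du_-)=-W(x,u_-)$ and the Taylor expansion
\[
W(x,v)=W(x,u_-)+\partial_uW(x,u_-)\,\theta e^{-At}\phi+o(\theta),
\]
where the $o(\theta)$ is uniform by continuity of $\partial_uW$ on compacts, I get
\[
v_t+G(x,Dv)+W(x,v)\le s\big[G(x,Dw)+W(x,u_-)-\zeta\partial_uW\big]+\theta e^{-At}\big[-A\phi+\partial_uW\,(\phi+\zeta^{-1}u_--\zeta^{-1}w)\cdot\ldots\big].
\]
The point is that the $w$-part of $\phi$ combines with the convexity term to produce $\le -s\eta$. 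The remaining terms are $\theta e^{-At}\big(-A\phi+\partial_uW\cdot(\text{bounded})\big)+o(\theta e^{-At})$, which are absorbed by $s\eta=\theta e^{-At}\eta/\zeta$ once $A$ and $\theta$ are small. The signs must be arranged so that $\phi\ge 1$ and the constant $K$ contributes $\partial_uW\cdot K$. To handle this, I would choose the combination so that the $u$-direction perturbation is exactly $\zeta\cdot s$ plus a multiple of $(w-u_-)$, matching the form $H(x,Dw,u_-)-\zeta\partial_uW$. Since $u_-$ is only a viscosity solution, convexity is applied in the viscosity sense: a convex combination of a viscosity subsolution ($u_-$, which is Lipschitz by coercivity) and a Lipschitz a.e. subsolution is an a.e. subsolution, hence a viscosity subsolution for convex $G$. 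Here "supersolution" must be treated carefully. I expect the upper barrier is handled by the subsolution inequality, via the semigroup representation $T^-_t$ and its order-preserving property, and by a symmetric argument with $s<0$ for the lower barrier.

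\textbf{Barriers.} This yields functions $v^\pm$ with
\[
|v^\pm-u_-|\le C\theta e^{-At} \quad\text{and}\quad v^-(\cdot,0)\le\varphi\le v^+(\cdot,0) \quad\text{when } \|\varphi-u_-\|_\infty\le\Delta:=c\theta.
\]
Comparison gives $\|u(\cdot,t)-u_-\|_\infty\le C\theta e^{-At}$, which implies Lyapunov stability, convergence, and the stated $\limsup$ bound.

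The main obstacle is the barrier on the side where convexity works against me. Convex combinations only preserve subsolutions, so the other-side barrier cannot be built this way. For that side I would instead use the variational (Lax--Oleinik) representation of $T^-_t$ from \cite{NWY,NW}: the implicit action gives
\[
u(x,t)-u_-(x)\approx \int \partial_uW\,(u-u_-)
\]
along minimizing curves of $u_-$. I would also use (A3)$_1$ in the form of a positive discount along calibrated curves, giving a Gronwall estimate of the form $d(t)\le d(0)e^{-At}$ for $d(t)=\max(u-u_-)$ and $\min(u-u_-)$ separately. This step, controlling $u-u_-$ from the non-convex side without velocity bounds on minimizers, is where I would spend the most effort.
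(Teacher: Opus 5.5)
Your proposal has genuine gaps on both sides of the sandwich. For orientation, the paper builds no PDE barriers. It compares $T^-_t\varphi$ with $T^-_t(u_-\pm\delta)$ and runs a Gronwall argument: along $u_-$-calibrated curves for the upper side (Step~1), along minimizers of $T^-_{t_c}(u_--\delta)$ for the lower side (Step~2), and then iterates (Step~3).

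Your convexity barrier fails as written. With $v=u_-+\theta e^{-At}\phi$, $\phi=\zeta^{-1}(w-u_-)+K$ and $s=\theta e^{-At}/\zeta$, the zeroth-order perturbation is $s(w-u_-+\zeta K)$. Convexity and $G(x,Du_-)=-W(x,u_-)$ a.e.\ then give
\[
v_t+G(x,Dv)+W(x,v)\le s\Big[G(x,Dw)+W(x,u_-)+\big(w-u_-+\zeta K\big)\big(\partial_uW(x,u_-)-A\big)\Big]+o(s).
\]
The term $(w-u_-+\zeta K)\,\partial_uW(x,u_-)$ is of the same order as the gain $s\eta$ and has no sign, so shrinking $A$ and $\theta$ cannot absorb it. To invoke (A3)$_2$, the $u$-perturbation must equal $-\zeta$ times the local weight of $Dw$. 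That quantity is negative, and your linear ansatz cannot produce it unless $w-u_-$ is constant. So convexity only yields subsolutions, which serve only as lower barriers. Your ``$s<0$'' version is an extrapolation $(1-s)Du_-+sDw$ with $1-s>1$, where convexity gives no upper bound on $G$.

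The lower barrier can be repaired with an exponential weight. Set $v=u_--\epsilon e^{-At}e^{(u_--w)/\zeta}$ and $\sigma=\epsilon e^{-At}e^{(u_--w)/\zeta}/\zeta$. Then $Dv=(1-\sigma)Du_-+\sigma Dw$ and $v-u_-=-\zeta\sigma$, so $v_t+H(x,Dv,v)\le\sigma\big(-\eta+A\zeta+o(1)\big)$ a.e. The uniform margin $\eta$ should come from (A3)$_1$, via a solution of \eqref{HWc}, not from compactness.

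The other side is only announced, and that is where the real content lies. What is needed is a time-averaged discount, not a pointwise one. $\partial_uW(x,u_-(x))$ may be negative along calibrated curves, so $d(t)\le d(0)e^{-At}$ is false in general. The paper proves in Lemma \ref{tc}, from \eqref{A} and a compactness argument on empirical measures of long calibrated segments, that $\frac1{t_c}\int_{-t_c}^0\partial_uW(\gamma,u_-(\gamma))\,ds>A-c$ for every calibrated $\gamma$. This gives decay only over blocks of length $t_c$, with a multiplicative constant, and one must iterate while keeping the deviation below the threshold $\bar\delta$ of Lemma \ref{bardelta}.

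Moreover, calibrated curves of $u_-$ are admissible competitors in the infimum defining $T^-_t$, so they bound only $\max(u-u_-)$. For $\min(u-u_-)$ you must follow minimizers of $T^-_t(u_--\delta)$, along which $u_-$ is merely dominated. That is exactly where the paper needs Lemma \ref{d<ep}: these minimizers stay close to calibrated curves.

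Finally, you never treat (A3)$_3$. The paper covers it because Lemma \ref{D-c} identifies $D^-c(0)$ with $\inf_{\widetilde{\mathfrak M}_-}\int\partial_uW(x,u_-(x))\,d\tilde\mu$, so (A3)$_3$ gives \eqref{A}, which is all the proof uses.

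Under (A3)$_2$ your hybrid idea can be completed. Use the repaired lower barrier together with the upper estimate along calibrated curves. The averaged bound
\[
\int_a^b\partial_uW(\gamma,u_-(\gamma))\,ds\ge\zeta^{-1}\big(\eta(b-a)-\mathrm{osc}(w-u_-)\big)
\]
then follows directly by subtracting the calibration identity for $u_-$ from the domination inequality for $w$. A bootstrap keeps the deviation below $\bar\delta$. This route would bypass both Mather measures and Lemma \ref{d<ep}.
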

We now present an example where the equation is not necessarily monotone in the unknown function. This example provides an explicit situation in which (A3)$_2$ can be verified directly.
\begin{ex}\label{ex}
Let $\zeta,\theta>0$ and let $\phi\in C^1(M)$. Consider
\[-(|D\phi|^2-\theta)u+\zeta|Du|^2+(|D\phi|^2-\theta)\phi-\zeta|D\phi|^2=0\quad \text{for }x\in M.\]
Then $u_-=\phi$ is a solution of the above equation and is locally asymptotically stable.
\end{ex}

To characterize Lyapunov instability, we introduce the following assumptions.
\begin{prop}\label{A4}
Assume {\rm (A1)--(A2)}. Consider the following conditions:
\begin{itemize}
\item[(A4)$_1$] There exists a constant $\zeta>0$ such that 
\[c\big(H(x,p,u_-(x))+\zeta \cdot \partial_u W(x,u_-(x))\big)<0.\]

\item[(A4)$_2$] There exist a constant $\zeta>0$ and a Lipschitz continuous function $w(x)$ such that
\[H(x,Dw,u_-(x))+\zeta \cdot \partial_u W(x,u_-(x))<0 \quad \text{for a.e. } x\in M.\]

\item[(A4)$_3$] The right derivative satisfies $D^+c(\ep)|_{\ep=0}<0$.
\end{itemize}
Then {\rm (A4)$_1$} and {\rm (A4)$_2$} are equivalent, and each of them implies {\rm (A4)$_3$}.
\end{prop}
\begin{thm}\label{thm2}
Let $u_-\in C(M)$ be a solution of \eqref{se}. Assume {\rm (A1)--(A2)} and one of {\rm (A4)$_1$--(A4)$_3$}. Then $u_-$ is Lyapunov unstable. More precisely, there exists $\Delta>0$ such that for any $\ep\in (0,1)$, there exists $\varphi_\ep\in C(M)$ satisfying $\|\varphi_\ep-u_-\|_\infty\leq \ep$ and
\[\limsup_{t\to+\infty}\|T^-_t\varphi_{\ep}-u_-\|_\infty\geq \Delta.\]
\end{thm}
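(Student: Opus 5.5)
By Proposition \ref{A4}, it suffices to assume (A4)$_3$, i.e. $D^+c(\ep)|_{\ep=0}<0$. The plan is to perturb $u_-$ upward by a small constant and show, by comparison with an explicit subsolution, that the solution is pushed a definite distance away from $u_-$.

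\emph{Step 1: a small upward subsolution.} Since $D^+c(\ep)|_{\ep=0}<0$ and $c(0)=0$ (because $u_-$ solves \eqref{se}), there is $\ep_0\in(0,1)$ with $c(\ep)<0$ for all $\ep\in(0,\ep_0]$. By the Lipschitz continuity of $c$ from Proposition \ref{A3}, we may take $c(\ep)\le -a\ep$ for some $a>0$ on this range. For each such $\ep$ choose a Lipschitz subsolution $w_\ep$ of
\[G(x,Dw_\ep)+W(x,u_-(x)+\ep)\le c(\ep)/2<0 \quad \text{for a.e. } x\in M.\]
Subsolutions of a convex coercive equation are Lipschitz with a bound depending only on $G$, $W$ and $\|u_-\|_\infty$, so the constant is uniform in $\ep$. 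The function $w_\ep$ will not in general be $\ep$-close to $u_-$, so it cannot be used directly as initial data. Instead, set $\varphi_\ep:=u_-+\ep$ and exploit the strict inequality.

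\emph{Step 2: comparison.} Since $\partial_uW$ is continuous, $u\mapsto W(x,u)$ is $C^1$, and the map $\ep\mapsto H(x,p,u_-+\ep)$ is controlled by $\Lambda$. I would show that $u(x,t):=T^-_t\varphi_\ep$ stays above $u_-+\ep$ for all $t>0$. The mechanism is that the function $v=u_-+\ep$ satisfies
\[v_t+H(x,Dv,v)=W(x,u_-+\ep)-W(x,u_-),\]
and the sign of the right-hand side must be extracted in an averaged sense from $c(\ep)<0$. Concretely, I would use the variational (implicit Lax--Oleinik) representation of $T^-_t$ along minimizing curves. I would also use the fact that $c(\ep)<0$ allows one to build, for the Hamiltonian frozen at $u_-+\ep$, a function that is a strict subsolution. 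Combining these, $T^-_t\varphi$ is monotone nondecreasing in $t$ whenever $\varphi$ is a strict subsolution of $H(x,D\varphi,\varphi)\le 0$ lying above $u_-$. Rather than $u_-+\ep$ itself, the initial datum should be the strict subsolution $\varphi_\ep:=\max\{u_-,\,w_\ep+k_\ep\}$, with $k_\ep$ chosen so that $\|\varphi_\ep-u_-\|_\infty\le\ep$ and $\varphi_\ep\ne u_-$. Because $w_\ep+k_\ep$ need not stay in the band $[u_-,u_-+\ep]$ where its subsolution property holds, this requires a truncation argument using $|\partial_uW|\le\Lambda$.

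\emph{Step 3: escape.} Since $\varphi_\ep$ is a subsolution, $t\mapsto T^-_t\varphi_\ep$ is nondecreasing. It is bounded, or else it escapes, which is fine. If it stays within $\Delta$ of $u_-$, it converges to some solution $\bar u\ge\varphi_\ep$ of \eqref{se} with $\|\bar u-u_-\|_\infty\le\Delta$. One then has $\bar u\ge u_-$, $\bar u\ne u_-$, and $\bar u$ is close to $u_-$. From $c(\ep)<0$ for all small $\ep>0$, deduce a contradiction: the strict negativity of the critical value at levels between $u_-$ and $\bar u$ is incompatible with $\bar u$ being a solution. This can be seen by comparing $\bar u$ with the solution of the frozen equation via a maximum-point argument at $\max(\bar u-u_-)$ and the implicit variational formula. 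Taking $\Delta=\ep_0$ then gives $\limsup_{t}\|T^-_t\varphi_\ep-u_-\|_\infty\ge\Delta$.

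The main obstacle is Step 2/3: converting the averaged information $c(\ep)<0$, which concerns the frozen Hamiltonian $H(x,p,u_-+\ep)$ with a constant shift, into a genuine inequality for the nonlinear flow, where the shift $\bar u-u_-$ is a nonconstant function. I would first try to handle this using the Lipschitz bound $\Lambda$ together with the techniques of \cite{NWY,NW} on minimizing curves.
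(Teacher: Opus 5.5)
\textbf{There is a genuine gap.} Your Step 3 relies on a rigidity claim that is false under (A4): a bounded nondecreasing orbit above $u_-$ may converge to another solution $\bar u\ge u_-$ with $\bar u\ne u_-$, arbitrarily close to $u_-$.

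Take $M=\R/2\pi\mathbb{Z}$, $H(x,p,u)=\frac12|p|^2-\lambda u$ with $\lambda>0$, and $u_-\equiv0$. Then (A4)$_1$ holds, since $c(\frac12|p|^2-\zeta\lambda)=-\zeta\lambda<0$. Yet for every finite $K\subset M$, the function $\bar u=\frac{\lambda}{2}d(x,K)^2$ is a viscosity solution of \eqref{se}. It is smooth away from the midpoints of $K$, and at those points it has concave kinks with empty subdifferential.

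Your own initial datum can be trapped by such a $\bar u$. Let $a=\sqrt{\ep/\lambda}$ and $w_\ep=-\sqrt{\lambda\ep}\,d(x,x_1)$. This $w_\ep$ satisfies $\frac12|Dw_\ep|^2-\lambda\ep\le c(\ep)/2=-\lambda\ep/2$, and with $k_\ep=\ep$ you get $\varphi_\ep=\max\{0,\ep-\sqrt{\lambda\ep}\,d(x,x_1)\}$. Now choose $K\ni x_1\pm2a$ with gaps at most $4a$. Then $\varphi_\ep\le\bar u\le2\ep$, so $0\le T^-_t\varphi_\ep\le\bar u$ and $\|T^-_t\varphi_\ep-u_-\|_\infty\le2\ep$ for all $t$.

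The same example shows your truncation in Step 2 cannot close. Where $0<\varphi_\ep<\ep/2$, one has $\frac12|D\varphi_\ep|^2-\lambda\varphi_\ep>0$, so $\varphi_\ep$ is not a subsolution. In general, the gain $|c(\ep)|/2\le\Lambda\ep/2$ is of the same order as the error $W(x,w_\ep+k_\ep)-W(x,u_-+\ep)\approx-\ep\,\partial_uW(x,u_-)$ near the set where $w_\ep+k_\ep$ meets $u_-$. Moreover, (A4) controls $\partial_uW$ only on average over Mather measures, not pointwise. So no soft nonexistence argument can replace a quantitative growth estimate for a well-chosen perturbation.

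\textbf{The missing idea is the paper's growth mechanism, which works in the opposite direction.} Take $\varphi_\ep=u_--\ep$, and for $x\in M$ take a backward $u_-$-calibrated curve $\gamma$ ending at $x$. Along $\gamma$, $u_-$ satisfies the action identity, while $T^-_s(u_--\ep)$ satisfies only the inequality coming from the infimum in \eqref{T-}. Hence $\bar v(s)=u_-(\gamma(s-\hat t))-T^-_s(u_--\ep)(\gamma(s-\hat t))\ge0$ obeys $\dot{\bar v}\ge-\bar v\int_0^1\partial_uW(\gamma,u_--\theta\bar v)\,d\theta$, which is a lower bound on the deviation.

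The averaged condition is then transferred to calibrated curves as follows:
\begin{enumerate}
\item By Lemma \ref{D-c}, (A4)$_3$ means $\int\partial_uW(x,u_-(x))\,d\tilde\mu_-<0$ for every Mather measure of $H_-$.
\item A closed-measure compactness argument turns this into $\frac1{t_0}\int_{-t_0}^0\partial_uW(\gamma,u_-(\gamma))\,ds\le-A$, uniformly over calibrated curves, for suitably long $t_0$ (Lemma \ref{tc2}).
\item Lemma \ref{bardelta} keeps the linearization error below $A/2$ as long as $\bar v\le\bar\delta$.
\end{enumerate}
If the deviation stayed below $\Delta=\bar\delta$ after some time $t_1$, then $\bar v$ would grow by a factor $e^{At_0/2}$ on $[t_1,t_1+t_0]$, a contradiction.

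With your upward perturbation, calibrated curves of $u_-$ give only an \emph{upper} bound on $T^-_t(u_-+\ep)-u_-$. That is exactly why your plan stalls at the obstacle you identify.
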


Finally, to obtain the uniqueness result and the global Lyapunov stability, we impose the following assumptions.
\begin{prop}\label{A5}
Assume {\rm (A1)--(A2)}. In addition, suppose:
\begin{itemize}
\item[(A1)'] $p\mapsto G(x,p)$ is strictly convex, and there exists a superlinear function $\Theta:[0,+\infty)\to \R$ such that $G(x,p)\geq \Theta(|p|)$.
\end{itemize}
Consider the following conditions:
\begin{itemize}
\item[(A5)$_1$] For any subsolution $w(x)$ of \eqref{se}, there exists a constant $\zeta>0$ such that
\[c\big(H(x,p,w(x))-\zeta \cdot \partial_u W(x,w(x))\big)<0.\]

\item[(A5)$_2$] For any subsolution $w(x)$ of \eqref{se}, there exist a constant $\zeta>0$ and a Lipschitz continuous function $\tilde w(x)$ such that
\[H(x,D\tilde w,w(x))-\zeta \cdot \partial_u W(x,w(x))<0 \quad \text{for a.e. } x\in M.\]
\end{itemize}
Then {\rm (A5)$_1$} and {\rm (A5)$_2$} are equivalent.
\end{prop}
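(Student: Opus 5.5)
The plan is to reduce Proposition \ref{A5} to the argument that gives the equivalence (A3)$_1\Leftrightarrow$(A3)$_2$ in Proposition \ref{A3}. That argument uses nothing about $u_-$ beyond continuity, so it should go through for each fixed subsolution $w$. Fix a subsolution $w$ of \eqref{se}, so that $w$ is Lipschitz by coercivity in (A1), and fix $\zeta>0$. Set
\[
\tilde G(x,p):=G(x,p)+W(x,w(x))-\zeta\,\partial_u W(x,w(x)).
\]
This $\tilde G$ is continuous, convex in $p$ and coercive, since $x\mapsto W(x,w(x))-\zeta\partial_uW(x,w(x))$ is continuous by (A2). So $c(\tilde G)$ is well defined, and (A5)$_1$ for $w$ with constant $\zeta$ reads $c(\tilde G)<0$. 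It therefore suffices to prove, for any continuous, convex, coercive $\tilde G$ on a compact manifold,
\[
c(\tilde G)<0 \iff \text{there is a Lipschitz } \tilde w \text{ with } \tilde G(x,D\tilde w)<0 \text{ a.e.}
\]
Because the same $\zeta$ works in both directions, the quantifier structure "for any $w$ there exists $\zeta$" is preserved.

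For (A5)$_1\Rightarrow$(A5)$_2$, suppose $c(\tilde G)<0$ and pick $c'\in(c(\tilde G),0)$. By the infimum characterization of the critical value, the equation $\tilde G(x,Du)=c'$ has a viscosity subsolution $\tilde w$. Coercivity makes $\tilde w$ Lipschitz. Convexity in $p$ gives that viscosity subsolutions coincide with a.e.\ subsolutions, so $\tilde G(x,D\tilde w)\le c'<0$ a.e.

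For the converse, suppose $\tilde G(x,D\tilde w)<0$ a.e. The only real point is upgrading this pointwise strict inequality to a uniform one, $\tilde G(x,D\tilde w)\le -\eta$ a.e.\ for some $\eta>0$; once that holds, $\tilde w$ is a subsolution at level $-\eta$ by convexity, and hence $c(\tilde G)\le-\eta<0$. I expect this to be the main obstacle, since an a.e.\ strict inequality does not by itself give a uniform gap.

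To get the gap I would follow the proof in Proposition \ref{A3}. There, in the formulation of (A3)$_2$, the strict inequality is presumably paired with the critical-value characterization. One route is the alternative characterization
\[
c(\tilde G)=\inf_{u\in \mathrm{Lip}}\ \operatorname*{ess\,sup}_x \tilde G(x,Du).
\]
If this infimum is attained, as happens with critical subsolutions, then strictness can be obtained by a strictly-convex-combination argument. Concretely, I would take a critical subsolution $u_c$ and consider $\lambda u_c+(1-\lambda)\tilde w$. If $c(\tilde G)=0$, then $\tilde G(x,Du_c)\le 0$ and $\tilde G(x,D\tilde w)<0$ a.e. Using compactness of $M$ and the Aubry set (the set where no strict subsolution exists), I would then derive a contradiction: any subsolution strict a.e.\ forces the Aubry set to be empty, which forces $c(\tilde G)<0$. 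This is the step to take directly from Proposition \ref{A3}, invoked verbatim with $u_-$ replaced by $w$.

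Assumption (A1)$'$ is not needed for this equivalence itself. It matters later for uniqueness, and here it only guarantees the standard weak KAM facts, such as nonemptiness of the Aubry set at the critical level.
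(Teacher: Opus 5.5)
Your reduction to a single continuous, convex, coercive $\tilde G$ and your proof of (A5)$_1\Rightarrow$(A5)$_2$ are fine. They match the paper, which only says that the argument of Lemma \ref{A31A32} carries over with $u_-$ replaced by $w$.

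\textbf{The gap.} It sits in the converse, exactly at the obstacle you flagged. It is not true that a Lipschitz $\tilde w$ with $\tilde G(x,D\tilde w)<0$ a.e.\ forces the Aubry set to be empty. Lemma \ref{lem:G=c} only says that $\tilde G(y,p)=c(\tilde G)$ for $p\in\partial^-\tilde w(y)$ with $y\in\mathcal A$. Since $\mathcal A$ can be Lebesgue-null, an a.e.\ inequality may carry no information on it. The convex-combination idea does not help, because it only produces another a.e.-strict subsolution. Your own characterization $c(\tilde G)=\inf_u\operatorname{ess\,sup}_x\tilde G(x,Du)$ shows that (A5)$_1$ amounts to $\operatorname{ess\,sup}_x\tilde G(x,D\tilde w)<0$, which is strictly stronger than $<0$ a.e.

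\textbf{Counterexample.} No argument can close the gap, because the implication is false as written. On $M=\T^1$ take $G(x,p)=|p|^2$ and $W(x,u)=a(x)u$ with $a(x)=\sin^2(\pi x)$; then (A1), (A2) and (A1)' all hold.
\begin{itemize}
\item Every subsolution $w$ of \eqref{se} is Lipschitz with $|Dw|^2+a(x)w\le 0$ a.e. Hence (A5)$_2$ holds with $\tilde w=w$ and $\zeta=1$: the left-hand side is $\le -a(x)<0$ for $x\neq 0$.
\item But $w\equiv 0$ is a subsolution, and $c\big(|p|^2-\zeta a(x)\big)=\max_x(-\zeta a(x))=0$ for every $\zeta>0$. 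Constants give $\le$, and $|Du|^2\ge 0$ together with continuity of $a$ gives $\ge$. So (A5)$_1$ fails.
\end{itemize}
In this example the Aubry set is $\{0\}$, a null set.

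\textbf{The paper's lemma and the fix.} The paper's Lemma \ref{A31A32}, which you propose to invoke verbatim, has the same defect. Its step ``for almost every $x$ contained in the projected Aubry set'' is vacuous in this example, so it yields no contradiction. What your argument does prove is the equivalence with (A5)$_2$ strengthened to a uniform gap: $H(x,D\tilde w,w(x))-\zeta\,\partial_uW(x,w(x))\le-\eta$ a.e.\ for some $\eta>0$. Under that hypothesis the converse is immediate and needs no Aubry set. By convexity, $\tilde w$ is a viscosity subsolution of $\tilde G(x,Du)=-\eta$, hence $c(\tilde G)\le-\eta<0$. Your forward direction already delivers such a gap, since it gives $\tilde G(x,D\tilde w)\le c'<0$ a.e. Incidentally, nonemptiness of the Aubry set already follows from (A1) by Lemma \ref{lem:G=c}, so (A1)' plays no role in this equivalence.
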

\begin{thm}\label{thm3}
Assume {\rm (A1)--(A2)}, {\rm (A1)'} and one of {\rm (A5)$_1$--(A5)$_2$}. Then \eqref{se} admits at most one solution. Moreover, if such a solution $u_-$ exists, then it is globally asymptotically stable, that is, for all $\varphi\in C(M)$, we have $\lim_{t\to+\infty}u(x,t)=u_-(x)$ uniformly.
\end{thm}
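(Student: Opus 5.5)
The strategy is to compare two solutions through their difference, using the linearization supplied by the differentiability of $W$ in $u$. Condition (A5)$_2$ is applied to a suitable subsolution to manufacture a strict subsolution of the linearized problem, which then serves as a barrier. Uniqueness comes first, and global stability is then reduced to uniqueness plus a compactness argument for the semigroup $T^-_t$.

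\textbf{Uniqueness.} Suppose $u_1,u_2$ are two solutions of \eqref{se}. By (A1) and (A1)', solutions of \eqref{se} are Lipschitz and semiconcave-type, so I work with $w:=\max\{u_1,u_2\}$ and $v:=\min\{u_1,u_2\}$. By convexity of $G$, the pointwise maximum of two subsolutions is again a subsolution, so $w$ is a subsolution of \eqref{se}.

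Apply (A5)$_2$ to $w$ to obtain $\zeta>0$ and a Lipschitz $\tilde w$ with
\[G(x,D\tilde w)+W(x,w)-\zeta\,\partial_uW(x,w)<0 \quad\text{a.e.}\]
By compactness and continuity of $\partial_u W$ this inequality holds with a margin $-\eta$. For small $s>0$, form the convex combination $\psi_s:=(1-s)w+s\tilde w$ on the gradient side. By convexity of $G$,
\[G(x,D\psi_s)\le (1-s)G(x,Dw)+sG(x,D\tilde w)\le -(1-s)W(x,w)-sW(x,w)+s\zeta\partial_uW(x,w)-s\eta.\]
Now compare with the level $w-s\zeta$. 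By the mean value theorem and the continuity of $\partial_uW$,
\[W(x,w-s\zeta)=W(x,w)-s\zeta\,\partial_uW(x,w)+o(s).\]
Hence, after subtracting a suitable constant, $\psi_s-s\zeta$ is a strict subsolution of \eqref{se} with margin of order $s\eta$, up to the normalization of the constant. This is the main obstacle: $\psi_s$ has the wrong constant level, because $W(x,\cdot)$ is evaluated at $w$ rather than at $\psi_s$. The fix I intend is to use $\psi_s:=w+s(\tilde w-\zeta-C)$, where $C$ shifts $\tilde w$ so that $\tilde w-C\le 0$ is small relative to $\zeta$. This requires a uniform bound on $\|\tilde w\|$, which can be obtained by re-choosing $\zeta$ large. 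I expect to have to handle this carefully via the positive homogeneity of the $\zeta$-scaling, and the order of choosing $C$, $\zeta$ and $s$ is the delicate point.

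With a strict subsolution $\psi_s\le w$ that is close to $w$, the comparison principle for strict subsolutions against the supersolution $u_i$ applies. Near $\psi_s$ the equation is locally monotone, because the Lipschitz constant $\Lambda$ is dominated by the strict margin, and this gives $\psi_s\le u_i$ for $i=1,2$. Letting $s\to0$ yields $w\le u_i$, hence $u_1=u_2$. Strict convexity from (A1)' is used to pass from an a.e. strict inequality to a viscosity strict subsolution via mollification, since convexity in $p$ lets a.e. Lipschitz subsolutions be viscosity ones.

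\textbf{Global stability.} For arbitrary $\varphi$, I would first show that $T^-_t\varphi$ is bounded uniformly in $t$. The upper bound comes from comparison with $u_-+K$; the lower bound comes from the strict subsolution $\psi$ constructed above, shifted, together with a Gronwall-type estimate using $|\partial_uW|\le\Lambda$. By (A1)' the family $\{T^-_t\varphi\}_{t\ge1}$ is then equi-Lipschitz, so the $\omega$-limit set is nonempty and compact. Using the Lyapunov-type functional $t\mapsto\|T^-_t\varphi-u_-\|_\infty$, which is nonincreasing outside a neighbourhood by the same strict-subsolution barrier, every $\omega$-limit point is an invariant set contained in sub- and supersolution envelopes of \eqref{se}. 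Those envelopes are solutions, and by uniqueness they equal $u_-$. Finally, the local exponential estimate of Theorem \ref{thm1}, whose hypothesis (A3)$_1$ is implied by (A5)$_1$ with $w=u_-$, upgrades this to uniform convergence.
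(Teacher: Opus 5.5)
Your uniqueness argument has a genuine gap, at exactly the point you call the main obstacle, and it is not a normalization issue. With $\psi_s=(1-s)w+s(\tilde w-K)$, convexity (using $G(x,Dw)\le -W(x,w)$ and the (A5) inequality for $\tilde w$) and the expansion of $W$ give
\[
G(x,D\psi_s)+W(x,\psi_s)\le s\Big[\big(\tilde w-w+\zeta-K\big)\,\partial_uW(x,w)-\eta\Big]+o(s)\quad\text{a.e.}
\]
So you need $\|(\tilde w-w+\zeta-K)\,\partial_uW(x,w)\|_\infty<\eta$.

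A constant $K$ cannot absorb the oscillation of $\tilde w-w$. That oscillation is of the same order as the margin and multiplies a coefficient $\partial_uW(x,w(x))$ of no fixed sign. Enlarging $\zeta$ does not help either: the critical value in (A5)$_1$ is at least $\min_pG(x_0,p)+W(x_0,w(x_0))-\zeta\,\partial_uW(x_0,w(x_0))$ for every $x_0$. It is therefore positive for large $\zeta$ whenever $\partial_uW(\cdot,w(\cdot))$ takes a negative value. Your variant $w+s(\tilde w-\zeta-C)$ has gradient $Dw+sD\tilde w$, which is not a convex combination, so the convexity bound is lost altogether.

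This reflects what (A5) is: an averaged condition, namely positivity of $\int\partial_uW\,d\tilde\mu$ on Mather measures (Lemma \ref{>A}). The paper only uses it through time averages along calibrated curves (Lemma \ref{tc}), never as a pointwise barrier.

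Moreover, even a genuine strict subsolution $\psi_s$ with margin $m=O(s)$ would not compare with $u_i$. At a maximum point of $\psi_s-u_i$ with value $\mu>0$ you only get $W(x,u_i+\mu)-W(x,u_i)\le -m$. This is consistent with $|\partial_uW|\le\Lambda$ as soon as $\mu\ge m/\Lambda$. For the index with $u_i\neq w$, $\mu\approx\max(w-u_i)>0$ stays fixed while $m\to0$. For equations not monotone in $u$ this comparison is exactly what is unavailable, so the step ``$\psi_s\le u_i$'' assumes what is to be proved.

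The stability part has the same defect. The function $u_-+K$ is a supersolution only if $W(x,u_-+K)\ge W(x,u_-)$, i.e.\ only under monotonicity. For instance, with $G=|p|^2$, $W=-u$, $u_-=0$ one gets $T^-_tK=Ke^t$. Shifted subsolutions are not subsolutions, Gronwall only yields $e^{\Lambda t}$ bounds, and the claimed decay of $\|T^-_t\varphi-u_-\|_\infty$ outside a neighbourhood is unsupported. Boundedness of $T^-_t\varphi$ is a real consequence of (A5), and your upper bound never uses it.

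The missing idea is the paper's dynamical route. Applying (A5)$_1$ to any solution gives (A3)$_1$ there, so every solution is locally asymptotically stable by Theorem \ref{thm1}. Combining this with the forward semigroup and $T^+_t\circ T^-_t\varphi\le\varphi\le T^-_t\circ T^+_t\varphi$, the paper shows $T^+_tu_\delta$ is unbounded below (Lemma \ref{ubd}). Otherwise the envelope of Lemma \ref{Mt} gives a forward weak KAM solution $v_+<u_-$, and $v_-=\lim_{t}T^-_tv_+$ contradicts either $T^+_t\circ T^-_t\le\mathrm{id}$ or the local stability of $v_-$.

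Hence $T^+_su_\delta\le\min\varphi$ for some $s$ (Lemma \ref{Q}). This gives $T^-_tu_\delta\le T^-_{t+s}\varphi\le u_-$, and so convergence for every $\varphi\le u_-$. Uniqueness follows by applying this to $\min\{u_-,w_-\}$. Convergence from above uses uniqueness plus a forward-semigroup argument to exclude unbounded growth.

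Note also that (A1)' is needed for the equi-Lipschitz bounds of Proposition \ref{equi}, not for mollification. Convexity alone makes a.e.\ Lipschitz subsolutions viscosity subsolutions.
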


In Theorem \ref{thm3}, assumption (A1)' is mainly used to ensure that the family $\{u(\cdot,t)\}_{t>1}$ is equi-Lipschitz continuous, which is guaranteed by Proposition \ref{equi} below. Another way to obtain such equi-continuity is to assume that $p\mapsto G(x,p)$ has a polynomial growth; see \cite[Theorems 1.3 and 1.4]{MNT}. When $H(x,p,u)=G(x,p)$ satisfying (A1), it is known that the solutions of \eqref{ee} are equi-Lipschitz continuous for all $t\geq 0$, provided that the initial data $\varphi$ is Lipschitz continuous; see, for example, \cite[Proposition 4.2]{CSC}. However, this property generally fails for Hamiltonians $H(x,p,u)$ that are non-monotone in $u$; see \cite[Lemma 2.3]{MN} and \cite[Lemma 2.2]{NWY}. It is therefore natural to ask whether any regularity can still be obtained in this setting, or whether there exist examples showing that solutions fail to be equi-continuous.

As an application of Theorem \ref{thm3}, we obtain the following corollary.
\begin{cor}\label{a>0A}
Assume {\rm (A1), (A1)'} and let $a(x)\in C(M)$. Let $\mathcal A$ be the projected Aubry set associated with $G$, defined in \eqref{defA}. Assume $a(x)\geq 0$. If $a(x)>0$ for all $x\in \mathcal A$, then
\begin{equation}\label{au}
a(x)u+G(x,Du)=c(G)\quad \text{for }x\in M
\end{equation}
admits a unique solution, which is globally asymptotically stable.
\end{cor}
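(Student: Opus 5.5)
Corollary \ref{a>0A} follows by applying Theorem \ref{thm3} to
\[H(x,p,u)=G(x,p)+W(x,u),\qquad W(x,u):=a(x)u-c(G).\]
Then $\partial_u W=a(x)$ is continuous and bounded by $\Lambda:=\|a\|_\infty+1$, so (A2) holds. Together with (A1) and (A1)', it remains to verify (A5)$_2$ and the existence of a solution.

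\emph{Verification of (A5)$_2$.} Let $w$ be any subsolution of \eqref{au}, i.e. $G(x,Dw)\le c(G)-a(x)w$ in the viscosity sense. By coercivity $w$ is Lipschitz, so this holds a.e. The required inequality reads
\[G(x,D\tilde w)+a(x)w(x)-c(G)-\zeta a(x)<0\quad\text{a.e.}\]
It suffices to find a Lipschitz $\tilde w$ with $G(x,D\tilde w)<c(G)-a(x)w+\zeta a(x)$. By (A5)$_1\Leftrightarrow$(A5)$_2$ this is the same as
\[c\big(G(x,p)+a(x)(w-\zeta)\big)<c(G)\quad\text{for some }\zeta>0.\]

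I would build $\tilde w$ as a convex combination $\tilde w=\lambda w+(1-\lambda)v$, where $v$ is a strict subsolution of $G=c(G)$ off the Aubry set. Such a $v$ exists by the standard Fathi--Siconolfi result: $G(x,Dv)\le c(G)$ everywhere, and $G(x,Dv)\le c(G)-\eta(x)$ a.e. with $\eta>0$ continuous on $M\setminus\mathcal A$. By convexity,
\[G(x,D\tilde w)\le \lambda\big(c(G)-aw\big)+(1-\lambda)\big(c(G)-\eta\big)=c(G)-\lambda a w-(1-\lambda)\eta .\]
We need this to be $<c(G)-aw+\zeta a$, that is,
\[(1-\lambda)a w-(1-\lambda)\eta<\zeta a .\]
Near $\mathcal A$ we have $a>0$, so it suffices that $\zeta>(1-\lambda)\|w\|_\infty$. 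Away from $\mathcal A$, $a$ may vanish, but $\eta\ge\eta_0>0$ there. The inequality then follows if $(1-\lambda)\|a\|_\infty\|w\|_\infty<(1-\lambda)\eta_0$, which can fail.

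To fix this, shift $w$: in the choice above, replace $w$ by $w-K$ only in the convex combination (a constant does not change $Dw$), so the $aw$ terms are handled by choosing $\zeta$ large. The cleanest route is to choose $\lambda=1-\theta$ with $\theta$ small. The condition becomes
\[\theta(a w-\eta)<\zeta a .\]
On the compact neighborhood $U$ of $\mathcal A$ where $a\ge a_0>0$, take $\zeta>\theta\|w\|_\infty$. On $M\setminus U$ we have $\eta\ge\eta_0$ and $a\ge0$, so it suffices that $\theta(aw-\eta_0)<\zeta a$. Take $\zeta\ge\theta\|w\|_\infty$; then $\theta aw\le\zeta a$ and the strict inequality comes from $-\theta\eta_0<0$. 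Thus any $\theta\in(0,1)$ and $\zeta>\theta\|w\|_\infty$ work.

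The main obstacle is this step: it requires the strict-subsolution function $v$ with strictness off $\mathcal A$, and the matching of neighborhoods. Here I would need care that $\eta$ is bounded below on $M\setminus U$, which holds because $\eta$ is continuous and positive on that compact set.

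\emph{Existence.} A solution of \eqref{au} exists by a Perron argument. Since $a\ge0$, the equation is monotone in $u$. A subsolution is $v-C$ for large $C$: indeed $a(v-C)+G(x,Dv)\le a(v-C)+c(G)\le c(G)$ once $C\ge\|v\|_\infty$. A supersolution comes similarly from a critical solution $u_0$ of $G(x,Du_0)=c(G)$ plus a large constant: $a(u_0+C)+c(G)\ge c(G)$. Alternatively, existence follows from the monotone large-time convergence result of \cite{S} applied to \eqref{ee}, using bounds from these sub- and supersolutions. Theorem \ref{thm3} then yields uniqueness and global asymptotic stability.
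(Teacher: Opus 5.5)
Your argument is correct and is essentially the paper's proof. Both apply Theorem~\ref{thm3} with $W(x,u)=a(x)u-c(G)$, get existence by Perron's method from shifted critical sub/supersolutions, and verify {\rm (A5)$_2$} via the Fathi--Siconolfi subsolution $v$ that is strict off $\mathcal A$, with $\zeta$ large enough to absorb $a(x)w(x)$ where $a>0$. The paper simply takes $\tilde w=v$, which is your case $\theta=1$ and is covered by your estimate, so the convex combination with $w$ is unnecessary; it chooses $\zeta=C_0+1$ with $C_0$ an upper bound for $w$. Your splitting into a neighborhood of $\mathcal A$ and its compact complement also gives a uniform negative margin in {\rm (A5)$_2$}, the form that transfers cleanly to {\rm (A5)$_1$}.
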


The conclusion of Corollary \ref{a>0A} can also be deduced by combining the convergence results for $a(x)\geq 0$ (see \cite{S}) with the uniqueness result in \cite[Proposition 4.1]{Z}. However, our proof is completely independent and relies solely on solution semigroups.

We now present an application of the large-time behavior established above to a homogenization problem. This result can be viewed as a nonlinear generalization of \cite[Theorem 1.2]{HJ} and a multiscale extension of the problem studied in \cite{LTZ}. In \cite{E}, only convergence (without rate) was obtained for this nonlinear case. The curve-cutting approach, originally introduced in \cite{TY}, has played a central role in the quantitative homogenization of Hamilton--Jacobi equations. In contrast to \cite{HJ}, we do not have an explicit representation formula for the solution $u^\ep$ of \eqref{eep}, and therefore the curve-cutting argument cannot be applied directly to derive a convergence rate. The result in \cite[Theorem 1.1]{MN}, which relies on the curve-cutting method, is used in the analysis. Together with an additional ingredient from the large-time behavior, we obtain a convergence rate in the present nonlinear setting. Moreover, unlike \cite{LTZ}, the multiscale nature of \eqref{eep} prevents the use of rescaling arguments and the comparison principle to obtain an $O(\ep)$ rate.
\begin{prop}\label{homo}
Let $H\in \BUC(\R^n\times\T^n\times B(0,R)\times\R)$ for each $R>0$. Assume that:
\begin{itemize}
\item[(1)] $x\mapsto H(x,y,p,u)$ is globally Lipschitz continuous,
\item[(2)] $p\mapsto H(x,y,p,u)$ is convex,
\item[(3)] $\lim_{|p|\to+\infty}\inf_{x\in \R^n,y\in \T^n}H(x,y,p,0)=+\infty$.
\item[(4)] there exist constants $\Lambda_1,\Lambda_2>0$ such that
\begin{equation}\label{L1L2}
\Lambda_1\leq \partial_uH(x,y,p,u)\leq \Lambda_2\quad \text{for a.e. }(x,y,p,u)\in \R^n\times\T^n\times \R^n\times\R.
\end{equation}
\end{itemize}
Let $u^\ep$ be the unique solution in $\BUC(\R^n)$ of
\begin{equation}\label{eep}
H\Big(x,\frac{x}{\ep},Du^\ep,u^\ep\Big)=0\quad \text{for }x\in\R^n.
\end{equation}
Define the effective Hamiltonian as follows: for each $x,p\in\R^n$ and $c\in \R$, there is a unique constant $\ol H(x,p,c)$ such that
\[H(x,y,p+D_y w,c)=\ol H(x,p,c)\quad \text{for }y\in\T^n\]
admits solutions. Then there exists a unique solution $\ol u\in \BUC(\R^n)$ of
\begin{equation}\label{olH}
\ol H(x,D\ol u,\ol u)=0\quad \text{for }x\in\R^n,
\end{equation}
and there exists a constant $C>0$, depending only on $H$, such that
\[\|u^\ep-\ol u\|_\infty\leq C\sqrt{\ep}.\]
\end{prop}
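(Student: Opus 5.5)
Our plan is to compare $u^\ep$ and $\ol u$ by treating \eqref{eep} as a stationary problem whose solution is the large-time limit of the evolution problem, and to use a contraction coming from \eqref{L1L2}. Step 1: well-posedness. Since $\partial_u H\ge\Lambda_1>0$, \eqref{eep} is strictly monotone in $u$, so the standard comparison principle in $\BUC(\R^n)$ gives uniqueness of $u^\ep$. Existence follows by Perron's method, using constant sub/supersolutions $\pm C_0$ with $C_0=\sup_{x,y}|H(x,y,0,0)|/\Lambda_1$. For $\ol H$, we would show that it inherits the structure: it is convex and coercive in $p$, Lipschitz in $x$, and satisfies $\Lambda_1\le \partial_c\ol H\le\Lambda_2$. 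The bound on $\partial_c\ol H$ comes from comparing the cell problems at $c$ and $c+h$: a corrector at level $c$ is a sub/supersolution of the problem at level $c+h$ after a shift of $\ol H$ by at most $\Lambda_2 h$ (respectively at least $\Lambda_1 h$). Existence and uniqueness of $\ol u$ then follow as for $u^\ep$. Uniform coercivity also gives a Lipschitz bound $\|Du^\ep\|_\infty+\|D\ol u\|_\infty\le C$ independent of $\ep$, because a subsolution with $|u|\le C_0$ satisfies $H(x,x/\ep,Du,0)\le \Lambda_2 C_0$.

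Step 2: reduce to a time-dependent homogenization estimate. Let $u^\ep(x,t)$ and $\ol u(x,t)$ solve
\[u_t+H(x,x/\ep,Du,u)=0,\qquad \ol u_t+\ol H(x,D\ol u,\ol u)=0,\]
both with the same Lipschitz initial datum $\varphi=\ol u$. By monotonicity in $u$, comparison gives the exponential contraction
\[\|u^\ep(\cdot,t)-u^\ep(\cdot)\|_\infty\le e^{-\Lambda_1 t}\|\varphi-u^\ep\|_\infty\le 2C_0e^{-\Lambda_1 t}.\]
This is the large-time ingredient. Its proof is a comparison of $u^\ep(x,t)$ with $u^\ep(x)\pm e^{-\Lambda_1 t}K$, which works because $\partial_uH\ge\Lambda_1$. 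Meanwhile $\ol u(x,t)\equiv\ol u$. Hence
\[\|u^\ep-\ol u\|_\infty\le 2C_0e^{-\Lambda_1 T}+\|u^\ep(\cdot,T)-\ol u(\cdot,T)\|_\infty.\]

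Step 3: finite-time rate. We would invoke \cite[Theorem 1.1]{MN}, which by the curve-cutting method gives a rate of the form $\|u^\ep(\cdot,t)-\ol u(\cdot,t)\|_\infty\le C(1+t)e^{\Lambda_2 t}\sqrt{\ep}$ (or some polynomial or exponential growth in $t$) for contact-type equations with Lipschitz $u$-dependence and $x$-dependence. Inserting this into Step 2 would give $C e^{-\Lambda_1 T}+Ce^{CT}\sqrt\ep$. Optimizing over $T$ only yields $\ep^{\gamma}$ with $\gamma<1/2$.

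The main obstacle is therefore to avoid the growth in $T$ and obtain exactly $\sqrt\ep$. To do this, we would instead compare the stationary problems directly. Fix $T=1$ and iterate: write $v_k=u^\ep(\cdot,k)$. Using the contraction property of both semigroups together with the one-step error $\|S^\ep_1 f-\ol S_1 f\|_\infty\le C\sqrt\ep$ for Lipschitz $f$ with a uniform Lipschitz constant (this is \cite{MN} at $t=1$, uniform thanks to the equi-Lipschitz bounds), we obtain
\[\|S^\ep_k\ol u-\ol u\|\le e^{-\Lambda_1}\|S^\ep_{k-1}\ol u-\ol u\|+C\sqrt\ep.\]
Summing the geometric series gives a bound $C\sqrt\ep/(1-e^{-\Lambda_1})$ uniformly in $k$. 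Letting $k\to\infty$ and using Step 2 yields $\|u^\ep-\ol u\|_\infty\le C\sqrt\ep$. The delicate points are the uniform Lipschitz bounds for the iterates $S^\ep_k\ol u$, for which we would rely on coercivity and the $u$-bounds, and checking that the constant in \cite{MN} depends only on $H$ and the Lipschitz constant of the datum.
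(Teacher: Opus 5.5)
Your argument is correct and is essentially the paper's: exponential contraction toward $u^\ep$ coming from $\partial_uH\ge\Lambda_1$, combined with the fixed-time $\sqrt{\ep}$ rate from \cite{MN} for the evolution problem with datum $\ol u$. The one real difference is that the paper needs neither your iteration nor uniform Lipschitz bounds on the iterates. It keeps $\|\ol u-u^\ep\|_\infty$ on the right of the contraction estimate instead of replacing it by $2C_0$, and absorbs it:
\[
\|u^\ep-\ol u\|_\infty\le e^{-\Lambda_1 t}\|u^\ep-\ol u\|_\infty+C(t)\sqrt{\ep},\qquad t=\log 2/\Lambda_1.
\]
This is your recursion collapsed into a single step. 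Splitting through $S^\ep_1\ol u$ shows that \cite{MN} only ever has to be applied with the datum $\ol u$.
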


\subsection*{Notation}
Let $X$ and $Y$ be metric spaces or smooth manifolds. We denote by $C(X)$ (resp. $\BUC(X)$, $C_{\rm b}(X)$ and $C^1(X)$) the spaces of continuous (resp. bounded uniformly continuous, bounded continuous and $C^1$) functions on $X$. We denote by $W^{m,p}(X)$ (resp. $W^{m,p}(X,Y)$) the Sobolev space of functions from $X$ to $\R$ (resp. from $X$ to $Y$) with weak derivatives up to order $m$ in $L^p(X)$ (resp. $L^p(X,Y)$), and by $\|\cdot\|_{W^{m,p}}$ the corresponding norm. The set $B(x,r)\subset \R^n$ stands for the open ball centered at $x$ with the radius $r$. For $x,y\in M$, we denote by $d(x,y)$ the Riemannian distance between $x$ and $y$. We write $\diam(M)$ for the diameter of $M$.

\subsection*{Organization}
In Section \ref{Sec2}, we collect several results from weak KAM theory for \eqref{G=c}, as well as techniques of contact-type Hamilton--Jacobi equations \eqref{ee} and \eqref{se}. In Section \ref{Sec3}, we prove Proposition \ref{A3}, Theorem \ref{thm1} and Example \ref{ex}. Section \ref{Sec4} is devoted to the proof of Theorem \ref{thm2}. Finally, in Section \ref{Sec5}, we prove Theorem \ref{thm3}, Corollary \ref{a>0A} and Proposition \ref{homo}.

\section{Preliminaries}\label{Sec2}

We first recall several results from the appendix of the arXiv version of \cite{Da1}. For weak KAM theory associated with \eqref{G=c} under standard PDE assumptions, we also refer the reader to \cite{FS,F}. Throughout this part, we assume (A1) and consider \eqref{G=c}. Since all subsolutions of \eqref{G=c} are equi-Lipschitz continuous, one may modify $G$ outside a sufficiently large ball so that the map $p\mapsto G(x,p)$ becomes uniformly superlinear. We then introduce the minimal action function
\[h_t(x,y):=\inf\bigg\{\int_{-t}^0L(\gamma(s),\dot\gamma(s))\, ds:\ \gamma\in{\rm AC}([-t,0],M),\ \gamma(-t)=x,\ \gamma(0)=y\bigg\},\]
where ${\rm AC}([-t,0],M)$ denotes the set of absolutely continuous curves defined on $[-t,0]$ with values in $M$. The Peierls barrier is defined by
\[h(x,y):=\liminf_{t\to+\infty}h_t(x,y).\]
The projected Aubry set is the closed set defined by
\begin{equation}\label{defA}
\mathcal A:=\{y\in M:\ h(y,y)=0\}.
\end{equation}
\begin{lem}\label{lem:G=c}
The projected Aubry set $\mathcal A$ is non-empty. Let $y\in\mathcal A$. Then, for every subsolution $w$ of \eqref{G=c}, one has
\[G(y,p)=c(G)\quad \text{for every }p\in \partial^-w(y),\]
where $\partial^-$ denotes the subdifferential.
\end{lem}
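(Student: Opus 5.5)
We would first reduce to $c(G)=0$ by replacing $G$ with $G-c(G)$ and $L$ with $L+c(G)$. As in the preliminaries, we may also assume that $G$ is uniformly superlinear in $p$, since all subsolutions of \eqref{G=c} are equi-Lipschitz. Both parts of the lemma rest on one elementary inequality. If $w$ is a Lipschitz subsolution of $G(x,Dw)\le 0$ that is \emph{strict} on an open set $U$, meaning $G(x,Dw)\le -\delta$ a.e. in $U$, then for every absolutely continuous $\gamma:[-t,0]\to M$ we have
\[
w(\gamma(0))-w(\gamma(-t))\le \int_{-t}^0 L(\gamma,\dot\gamma)\,ds-\delta\,\big|\{s:\gamma(s)\in U\}\big| .
\]
This follows from Fenchel's inequality $\langle Dw,\dot\gamma\rangle\le L+G$. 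Along a curve it is justified in the usual way: mollify $w$ in charts and use the convexity of $G$ (Jensen) to keep the sub/strict-sub property up to an arbitrarily small error. Taking $\delta=0$ on $U=\emptyset$ gives $w(y)-w(x)\le h_t(x,y)$ for all $t$, and hence $h(x,y)\ge w(y)-w(x)$. In particular $h(y,y)\ge 0$.

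\emph{Second part.} Fix $y\in\mathcal A$, a subsolution $w$ and $p\in\partial^-w(y)$. The inequality $G(y,p)\le 0$ is routine. Since $w$ is Lipschitz, $\partial^-w(y)$ is contained in the Clarke gradient of $w$ at $y$, which is the closed convex hull of limits of differentials $Dw(x_k)$ with $x_k\to y$ at points of differentiability. Each such limit satisfies $G\le 0$ by continuity of $G$, and convexity of $G$ in $p$ passes this to the convex hull.

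For the reverse inequality, suppose $G(y,p)<0$. Take $\varphi\in C^1$ touching $w$ from below at $y$ with $D\varphi(y)=p$. Set
\[
\tilde w:=\max\big(w,\ \varphi+\eta-K d(x,y)^2\big),
\]
with $K$ large and $\eta>0$ small (in a chart). Then:
\begin{itemize}
\item near $y$ the second function dominates, and its gradient is close to $p$, so $G\le -\delta$ on a small ball $U=B(y,r)$;
\item away from $B(y,r/2)$ we have $\tilde w=w$;
\item the maximum of two subsolutions is a subsolution.
\end{itemize}
So $\tilde w$ is a subsolution that is strict on $U$.

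Now take loops $\gamma_n$ from $y$ to $y$ on $[-t_n,0]$, $t_n\to\infty$, with action tending to $h(y,y)=0$, and apply the displayed inequality to $\tilde w$. It gives $\delta\,|\{s:\gamma_n(s)\in U\}|\le \int L\to 0$. On the other hand, each $\gamma_n$ starts at $y$, so it spends at least time $\tau$ in $U$, or it exits $B(y,r/2)$ in time less than $\tau$. In the second case superlinearity of $L$ forces action at least $K'r/2-C(K')\tau$ on that initial piece. The remaining piece of the loop has action bounded below by $-C$ uniformly, because $w$ is a subsolution: its action from $x$ to $y$ is at least $w(y)-w(x)\ge -\operatorname{Lip}(w)\diam(M)$. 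The delicate point is that the action near the endpoint returning to $y$ must be bounded below by a \emph{positive} amount, not just $-C$. I would handle this by applying the same dichotomy at both ends of the loop and bounding the middle piece using $w$ itself, since $w(\gamma(\cdot))$ telescopes: the total action is at least $\big[w(\gamma(-t+\sigma_1))-w(y)\big]+\big[w(y)-w(\gamma(-\sigma_2))\big]$ plus the strictly positive end contributions, where $\sigma_1,\sigma_2$ are the exit and re-entry times. Choosing $K'$ large relative to $\operatorname{Lip}(w)$ and then $\tau$ small, either branch yields $\liminf_n\int L\ge c_0>0$. This contradicts $h(y,y)=0$, and I expect this bookkeeping to be the main obstacle.

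\emph{Nonemptiness.} Suppose $\mathcal A=\emptyset$. Then $h(y,y)>0$ for every $y$, and I would use the standard Fathi--Siconolfi fact that $h(y,\cdot)$ is a subsolution which is strict in a neighbourhood $U_y$ of $y$. This is the converse direction of the loop argument above: if it were not strict near $y$, one could concatenate near-optimal curves to build loops at $y$ with action tending to $0$. Cover $M$ by finitely many $U_{y_i}$ and set $w=\frac1N\sum_i h(y_i,\cdot)$. By convexity of $G$, $w$ satisfies $G(x,Dw)\le-\delta/N$ a.e. on $M$. Hence $c(G)\le -\delta/N<0$, contradicting the normalization $c(G)=0$.
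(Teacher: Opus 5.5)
The paper gives no proof of this lemma; it quotes it from the appendix of the arXiv version of Davini's paper. So I compare your attempt with the standard Fathi--Siconolfi argument.

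\textbf{Second claim: sound.} The bump $\max(w,\varphi+\eta-Kd(\cdot,y)^2)$ is the usual way to turn a subgradient with $G(y,p)<c(G)$ into a subsolution that is strict near $y$. The loop dichotomy then forces $h(y,y)>0$. Two small fixes are needed.
\begin{enumerate}
\item The strict set must be a small ball $U=B(y,r')$ inside the open set where the bump dominates $w$, not $B(y,r)$. On $B(y,r)\setminus B(y,r/2)$ you have $\tilde w=w$, which need not be strict.
\item The end-piece bookkeeping needs only one end. Suppose the loop leaves $U$ before time $\tau$ at a point $x$ with $d(x,y)=r'$. Then the rest of the loop costs at least $w(y)-w(x)\ge-\operatorname{Lip}(w)\,r'$, so the total action is at least $(K'-\operatorname{Lip}(w))r'-C(K')\tau$.
\end{enumerate}

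\textbf{Nonemptiness: genuine gap.} The ``standard fact'' you invoke is false: $h(y,y)>0$ does not make $h(y,\cdot)$ a subsolution strict near $y$.
\begin{itemize}
\item Normalize $c(G)=0$ and set $S(y,x):=\inf_{t>0}h_t(y,x)$. Then $S(y,\cdot)$ is a viscosity solution on $M\setminus\{y\}$, so it is strict on no neighbourhood of $y$. The barrier $h(y,\cdot)$ behaves the same way.
\item Concretely, take $\T^1$ with $G=\frac12p^2+V(x)$, $V\le0$, $V^{-1}(0)=\{0\}$. Then $c(G)=0$ and $\mathcal A=\{0\}$. For $y\neq0$ one has $h(y,y)>0$, yet $h(y,\cdot)=S(y,0)+S(0,\cdot)$ satisfies $|Dh(y,\cdot)|=\sqrt{-2V}$ a.e., so $G(x,D_xh(y,x))=0$ a.e.
\end{itemize}
Your justification (``if it were not strict near $y$, concatenate near-optimal curves to get loops with action $\to0$'') therefore fails for this candidate. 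More importantly, it hides the real issue. Producing \emph{some} subsolution strict near $y$ from $h(y,y)>0$ is exactly the nontrivial converse of your loop argument, and it needs an extra ingredient.

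\textbf{How to close it.} The standard ingredient is calibrated curves.
\begin{enumerate}
\item Suppose $S(y,\cdot)$ were also a supersolution at $y$. Then it would be a fixed point of the Lax--Oleinik semigroup.
\item It would then admit a backward calibrated curve $\gamma:(-\infty,0]\to M$ with $\gamma(0)=y$. Finite-time minimizers exist by Lemma \ref{lem:ls} and superlinearity, and a diagonal argument gives the infinite curve.
\item Join $y$ to $\gamma(-t)$ by a path of action at most $S(y,\gamma(-t))+1/n$, then follow $\gamma|_{[-t,0]}$, whose action is $-S(y,\gamma(-t))$. This gives loops at $y$ of duration $\ge t$ and action $\le 1/n$, so $h(y,y)=0$.
\item Hence $h(y,y)>0$ forces $S(y,\cdot)$ to fail the supersolution test at $y$. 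That gives $p\in\partial^-S(y,\cdot)(y)$ with $G(y,p)<0$.
\item Your own bump construction applied to $S(y,\cdot)$ now yields the local strict subsolution.
\end{enumerate}
After that, your covering and convex-combination step correctly gives a global strict subsolution, contradicting $c(G)=0$.

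An equivalent route: take a critical solution with a backward calibrated curve and show that its $\alpha$-limit points lie in $\mathcal A$.
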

\begin{defn}
For a Lipschitz continuous function $u$, define
\[\mathcal L(u):=\{(x,v)\in TM:\ \partial^+_{\rm C}u(x,v)=L(x,v)+c(G)\},\]
where
\[\partial^+_{\rm C}u(x,v):=\sup\{p(v):\ p\in\partial_{\rm C} u(x)\},\]
and $\partial_{\rm C}$ denotes the Clarke generalized gradient. The Aubry set is then defined by
\[\widetilde {\mathcal A}:=\bigcap_{u}\mathcal L(u),\]
where the intersection is taken over all subsolutions of \eqref{G=c}.
\end{defn}

\begin{defn}
A Borel probability measure $\tilde{\mu}$ on $TM$ is said to be closed if
\begin{itemize}
\item [(1)] $\int_{TM}|v|\, d\tilde{\mu}(x,v)<+\infty$;
\item [(2)] for every function $f\in C^1(M)$, one has $\int_{TM}\langle Df(x),v\rangle_x \, d\tilde{\mu}(x,v)=0$.
\end{itemize}
\end{defn}

\begin{prop}\label{Mather}
The following holds
\begin{equation}\label{muc}
\min_{\tilde{\mu}}\int_{TM}L(x,v)\, d\tilde{\mu}=-c(G),
\end{equation}
where the minimum is taken among all closed measures on $TM$. Measures achieving the minimum are called Mather measures. We denote by $\widetilde{\mathfrak M}$ the set of all Mather measures, which is convex and compact in the weak topology. All Mather measures are supported on $\widetilde{\mathcal A}$. The Mather set is defined as
\[\widetilde{\mathcal M}=\overline{\bigcup_{\tilde{\mu}\in\tilde{\mathfrak M}}\textrm{supp}(\tilde{\mu})}.\]
The projected Mather set is $\mathcal M=\pi(\widetilde{\mathcal M})$.
\end{prop}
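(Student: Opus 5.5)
The plan is to prove the two inequalities in \eqref{muc} separately, then get compactness from superlinearity, and finally obtain the support property by integrating the Fenchel inequality against a Mather measure. Throughout, $G$ is modified outside a large ball so that $L$ is finite-valued near the relevant velocities and uniformly superlinear, as explained at the start of Section \ref{Sec2}.

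\emph{Lower bound.} Let $\tilde\mu$ be a closed measure with $\int L\,d\tilde\mu<+\infty$; otherwise there is nothing to prove. Fix $\delta>0$. Since $G$ is convex and coercive, a subsolution of \eqref{G=c} can be regularized: local mollification in charts, glued by a partition of unity, gives $w_\delta\in C^1(M)$ with $G(x,Dw_\delta)\le c(G)+\delta$. The gluing errors are controlled by the uniform continuity of $G$ on compact sets and the equi-Lipschitz bound on subsolutions. The Fenchel inequality gives
\[L(x,v)\ge \langle Dw_\delta(x),v\rangle_x-c(G)-\delta .\]
Integrating against $\tilde\mu$, closedness kills the first term, so $\int L\,d\tilde\mu\ge -c(G)-\delta$. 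Letting $\delta\to0$ gives $\ge -c(G)$.

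\emph{Attainment.} Since $\mathcal A\neq\emptyset$ by Lemma \ref{lem:G=c}, choose $y\in\mathcal A$. Then there are $t_n\to\infty$ and closed curves $\gamma_n:[-t_n,0]\to M$ with $\gamma_n(-t_n)=\gamma_n(0)=y$ and
\[\int_{-t_n}^0 \bigl(L(\gamma_n,\dot\gamma_n)+c(G)\bigr)\,ds\to0 .\]
Here $h(y,y)=0$ is read with the standard normalization, i.e. $L$ replaced by $L+c(G)$. Let $\tilde\mu_n$ be the time-averaged occupation measures $\frac1{t_n}(\gamma_n,\dot\gamma_n)_\#ds$. Because $\gamma_n$ is closed, $\int\langle Df,v\rangle\,d\tilde\mu_n=0$ exactly. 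The integrals $\int L\,d\tilde\mu_n$ are bounded above, and superlinearity then gives tightness and uniform integrability of $|v|$. A weak limit $\tilde\mu$ is therefore closed, and lower semicontinuity of $L$ yields $\int L\,d\tilde\mu\le -c(G)$. Combined with the lower bound, the minimum is attained and equals $-c(G)$.

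\emph{Structure of $\widetilde{\mathfrak M}$.} The set $\widetilde{\mathfrak M}$ is convex because both the constraint and the functional are linear. It is compact because any family with $\int L\le -c(G)$ is tight with uniformly integrable $|v|$ by superlinearity; closedness passes to weak limits, and the minimizing property is preserved by lower semicontinuity of $\tilde\mu\mapsto\int L\,d\tilde\mu$.

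\emph{Support on $\widetilde{\mathcal A}$.} Fix a subsolution $u$, which is Lipschitz. Every $p\in\partial_{\rm C}u(x)$ is a convex combination of limits of gradients satisfying $G\le c(G)$. By convexity $G(x,p)\le c(G)$, so
\[F(x,v):=L(x,v)+c(G)-\partial^+_{\rm C}u(x,v)\ge0 .\]
Moreover $F$ is lower semicontinuous, since $\partial_{\rm C}u$ is upper semicontinuous as a set-valued map; hence $\{F=0\}=\mathcal L(u)$ is closed. Now take mollifications $u_k$ of $u$. Then $\int\langle Du_k,v\rangle\,d\tilde\mu=0$ and $\limsup_k\langle Du_k(x),v\rangle\le\partial^+_{\rm C}u(x,v)$. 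With domination by $\mathrm{Lip}(u)|v|$, reverse Fatou gives $\int\partial^+_{\rm C}u\,d\tilde\mu\ge0$. Hence for a Mather measure $\int F\,d\tilde\mu\le0$, so $F=0$ $\tilde\mu$-a.e. Closedness of $\mathcal L(u)$ then gives ${\rm supp}\,\tilde\mu\subset\mathcal L(u)$, and intersecting over all subsolutions gives ${\rm supp}\,\tilde\mu\subset\widetilde{\mathcal A}$.

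The main obstacle is the technical regularization on a manifold with merely continuous $G$. This concerns the $C^1$ approximate subsolutions in the lower bound, and the inequality $\limsup_k\langle Du_k,v\rangle\le\partial^+_{\rm C}u$ uniformly across charts. I would handle both with finitely many charts, a partition of unity, and the uniform modulus of continuity of $G$ on $M\times\{|p|\le R\}$.
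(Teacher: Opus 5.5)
The paper gives no proof of this proposition; it quotes it from the appendix of the arXiv version of \cite{Da1}. Your argument is a correct self-contained substitute, and each step goes through:
\begin{itemize}
\item the Fenchel/closedness lower bound;
\item the occupation measures of near-minimizing loops through $y\in\mathcal A$, which are exactly closed and satisfy $\int L\,d\tilde\mu_n=\frac1{t_n}\int_{-t_n}^0\bigl(L(\gamma_n,\dot\gamma_n)+c(G)\bigr)\,ds-c(G)\to -c(G)$;
\item compactness via superlinearity and uniform integrability of $|v|$;
\item the support argument, which uses the bound $G(x,p)\le c(G)$ on $\partial_{\rm C}u(x)$, the lower semicontinuity of $F$, and reverse Fatou with the majorant $C|v|\in L^1(\tilde\mu)$.
\end{itemize}

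Two comparisons are worth recording.

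First, your last step proves $\int\partial^+_{\rm C}u\,d\tilde\mu\ge0$ for \emph{every} closed measure and every Lipschitz $u$. Combined with $\partial^+_{\rm C}u\le L+c(G)$ for a subsolution, this already gives $\int L\,d\tilde\mu\ge -c(G)$. So you do not need the $C^1$ approximate subsolutions with $G(x,Dw_\delta)\le c(G)+\delta$, which you single out as the main obstacle. Plain mollification of $u$ suffices, and on $M$ it only requires two things: the partition-of-unity gluing error in $Du_k$ tends to $0$ uniformly, and the inequality $\limsup_k\langle Du_k(x),v\rangle\le\partial^+_{\rm C}u(x,v)$ holds pointwise in each chart. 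No modulus of continuity of $G$ enters at all.

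Second, the paper itself manufactures minimizing measures differently, in the proof of Lemma \ref{tc}. There it averages along calibrated half-curves $\gamma:(-\infty,0]\to M$ of a weak KAM solution. Those measures are only asymptotically closed, and minimality comes from calibration through $\frac{u(\gamma(0))-u(\gamma(-t))}{t}\to0$. Your loops are exactly closed at each stage. In exchange, they rely on $\mathcal A\neq\emptyset$ (Lemma \ref{lem:G=c}) and on reading $h_t$ with $L+c(G)$. You are right to flag this, since $h_t$ as written in the paper omits $c(G)$ and \eqref{defA} is meaningful only after that normalization.
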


The following result is classical in the calculus of variations.
\begin{lem}\cite[Theorem 3.5]{One}.\label{lem:ls}
Let $J$ be a bounded interval of $\mathbb R$. Assume that $L(x,v)$ is lower semicontinuous, convex in $v$, and bounded from below. Then the integral functional
\begin{equation*}
  \mathcal L(\gamma):=\int_J L(\gamma(s),\dot \gamma(s))\, ds
\end{equation*}
is sequentially weakly lower semicontinuous in $W^{1,1}(J,M)$, that is, if there is a sequence $(\gamma_n)_n$ weakly converges to $\gamma$ in $W^{1,1}(J,M)$, then
\[\mathcal L(\gamma)\leqslant\liminf_{n\to+\infty} \mathcal L(\gamma_n).\]
Equivalently we can say that the above inequality holds if $(\gamma_n)_n$ uniformly converges to $\gamma$ and the $L^1$-norms of $(\dot{\gamma}_n)_n$ are equi-bounded.
\end{lem}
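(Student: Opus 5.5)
The plan is the classical Tonelli–Ioffe argument: approximate the convex integrand from below by affine-in-$v$ functions with continuous coefficients, then pass to the limit using strong (uniform) convergence of the curves and weak convergence of their velocities. First, to avoid chart bookkeeping, embed $M$ isometrically in some $\R^N$ (or cover $M$ by finitely many charts), so curves become $\R^N$-valued $W^{1,1}$ maps. If $\gamma_n\rightharpoonup\gamma$ weakly in $W^{1,1}(J,M)$, then $\dot\gamma_n\rightharpoonup\dot\gamma$ weakly in $L^1$ and $\gamma_n$ is bounded in $W^{1,1}$. In one dimension the bound $|\gamma_n(t)-\gamma_n(s)|\le\int_s^t|\dot\gamma_n|$, together with equi-integrability of $\dot\gamma_n$ (Dunford–Pettis), yields equicontinuity. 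Hence, by Arzel\`a–Ascoli and uniqueness of the limit, $\gamma_n\to\gamma$ uniformly.

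The key structural step is a De Giorgi-type approximation. A lower semicontinuous function $L$, convex in $v$ and bounded below, can be written as
\[L(x,v)=\sup_{k\in\N}\big(a_k(x)+\langle b_k(x),v\rangle\big),\]
with $a_k,b_k$ continuous and bounded, and $b_k$ moreover compactly supported in the relevant chart. This is obtained by mollifying the Legendre-type representation $L(x,v)=\sup_{p}\,(\langle p,v\rangle-L^*(x,p))$ and exploiting lower semicontinuity in $x$. I expect this approximation to be the main technical obstacle, because $L$ may take the value $+\infty$ and is only lower semicontinuous jointly. My first attempt would be the standard inf-convolution in $x$: set $L_j(x,v):=\inf_y\big(L(y,v)+j\,d(x,y)\big)$, which increases to $L$. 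Affine minorants in $v$ with continuous coefficients are then extracted from these regularized functions.

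Granting the representation, fix $m$ and a finite family $k_1,\dots,k_m$, and let $\{E_i\}_{i=1}^m$ be a measurable partition of $J$. Since $L$ is bounded below, Fatou-type truncation is harmless, and
\[\int_J L(\gamma_n,\dot\gamma_n)\,ds\ \ge\ \sum_{i=1}^m\int_{E_i}\big(a_{k_i}(\gamma_n)+\langle b_{k_i}(\gamma_n),\dot\gamma_n\rangle\big)\,ds .\]
Because $a_{k_i}(\gamma_n)\to a_{k_i}(\gamma)$ and $b_{k_i}(\gamma_n)\to b_{k_i}(\gamma)$ uniformly and boundedly, while $\dot\gamma_n\rightharpoonup\dot\gamma$ weakly in $L^1$, each term on the right converges to the corresponding expression in $\gamma$. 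The product of a uniformly convergent bounded sequence with a weakly $L^1$-convergent sequence converges weakly, by writing $b(\gamma_n)\dot\gamma_n-b(\gamma)\dot\gamma=(b(\gamma_n)-b(\gamma))\dot\gamma_n+b(\gamma)(\dot\gamma_n-\dot\gamma)$. Taking the supremum over partitions gives
\[\liminf_{n}\int_J L(\gamma_n,\dot\gamma_n)\ \ge\ \int_J\max_{i\le m}\big(a_{k_i}(\gamma)+\langle b_{k_i}(\gamma),\dot\gamma\rangle\big)\,ds .\]
Letting $m\to\infty$ and applying monotone convergence (the maxima increase to $L(\gamma,\dot\gamma)$ and are bounded below once $a_1+\langle b_1,\cdot\rangle$ is included) yields $\mathcal L(\gamma)\le\liminf_n\mathcal L(\gamma_n)$.

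For the "equivalent" formulation, assume $\gamma_n\to\gamma$ uniformly with $\|\dot\gamma_n\|_{L^1}$ bounded, and pass to a subsequence realizing the $\liminf$, which we may assume is finite. In the present setting $L$ is uniformly superlinear, so the bound on the actions gives equi-integrability of $\dot\gamma_n$ by de la Vallée Poussin. Dunford–Pettis then provides a further subsequence with $\dot\gamma_n\rightharpoonup\eta$ weakly in $L^1$. Passing to the limit in $\gamma_n(t)=\gamma_n(s)+\int_s^t\dot\gamma_n$ identifies $\eta=\dot\gamma$, which reduces this case to the weak $W^{1,1}$ case.
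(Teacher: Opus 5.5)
The paper gives no proof of this lemma; it is quoted from \cite{One}. Your outline is the classical argument for integrands that are continuous in $x$: uniform convergence from weak $W^{1,1}$ convergence, affine minorants with continuous coefficients, a measurable partition, then monotone convergence. The limit passages you write down are correct.

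The gap is the step you grant. When $L$ is only lower semicontinuous, convex in $v$ and bounded below, a representation $L=\sup_k(a_k(x)+\langle b_k(x),v\rangle)$ with continuous $a_k,b_k$ need not exist. Here is a counterexample on $\T^1$, writing $|x|$ for the distance to $0$:
\begin{itemize}
\item Set $L(0,v)=|v|$ and $L(x,v)=\max\{-1,|x|^{-1}-v\}$ for $x\neq 0$.
\item This $L$ is finite, convex in $v$, $\geq -1$, and lower semicontinuous, since near $x=0$ one has $L\geq |x|^{-1}-v\to+\infty$.
\item Let $a,b$ be continuous with $a+bv\leq L$. Letting $v\to+\infty$ at $x\neq0$ forces $b\leq 0$ there, hence $b(0)\leq 0$. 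Taking $v=0$ at $x=0$ gives $a(0)\leq 0$.
\item So every such minorant is $\leq 0<v=L(0,v)$ at $(0,v)$ for $v>0$.
\end{itemize}
Your suggested construction also fails, even for continuous $L$. Inf-convolution in $x$ destroys convexity in $v$, and affine minorants of the regularized functions can miss the value. Take $L(y,v)=\max\{0,1-v\sin(2\pi y)\}$ on $\T^1$. Choosing $\sin(2\pi y)=1/v$ gives $L_j(0,v)\leq j/|v|$ for $|v|\geq 1$. Hence any affine minorant of $L_j(0,\cdot)$ is $\leq 0$ at $v=0$, whereas $L(0,0)=1$.

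What is missing is an argument in which convexity is used only at the frozen base point $\gamma(s)$, after the $x$-dependence has been handled by lower semicontinuity alone; this is Ioffe's theorem. One route uses Young measures:
\begin{itemize}
\item Work in an embedding $M\subset\R^N$, extending $L$ by $+\infty$ off $TM$. Along a subsequence realizing the $\liminf$, let $(\dot\gamma_n)$ generate a Young measure $(\nu_s)_{s\in J}$.
\item Equi-integrability gives $\int|\xi|\,d\nu_s<+\infty$ and $\int\xi\,d\nu_s(\xi)=\dot\gamma(s)$ for a.e. $s$.
\item Since $\gamma_n\to\gamma$ uniformly, lower semicontinuity and the lower bound of $L$ yield $\liminf_n\mathcal L(\gamma_n)\geq\int_J\int L(\gamma(s),\xi)\,d\nu_s(\xi)\,ds$. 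No convexity is used in this step.
\item Jensen's inequality for the convex lower semicontinuous function $L(\gamma(s),\cdot)$ then gives $\mathcal L(\gamma)$ as a lower bound.
\end{itemize}
Alternatively, restrict to the setting where the paper actually uses the lemma: $G$ is modified to be superlinear, so $L$ is finite and continuous. There a continuous-coefficient representation does exist. You get it by averaging supporting planes of $L(x,\cdot)$ against a mollifier and integrating by parts, so the coefficients are integrals of $L(x,\cdot)$ against fixed kernels and inherit continuity in $x$. That has to be proved, though, not obtained from inf-convolution.

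Your appeal to superlinearity in the ``equivalent'' formulation is genuinely necessary. On $\T^2$, let $L=|v|$ on $\{x_2=0\}$ and $L=\max\{-1,|x_2|^{-1/2}-v_1\}+|v_2|$ elsewhere. Consider staircases that rest on $\{x_2=0\}$ and make $N$ horizontal jumps of length $\frac{1}{4N}$ at height $N^{-4}$ with speed $N^2+1$. They converge uniformly to $s\mapsto(s,0)$ on $[0,\frac14]$ with bounded length, yet their actions tend to $0<\frac14=\mathcal L(\gamma)$. Under superlinearity, your reduction via de la Vall\'ee Poussin and Dunford--Pettis is correct.
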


We finally collect several results given in \cite{NWY,NW}. We also refer the reader to \cite{CCJWY,IWWY,WWY1} for related results on contact Hamiltonian systems and contact-type Hamilton--Jacobi equations. Assume (A1)--(A2). The Lagrangian associated with $H$ is defined by
\[L_H(x,v,u):=\sup_{p\in T^*_xM}\big(\langle p,v\rangle_x-H(x,p,u)\big).\]
\begin{prop}\label{ppT-}
The backward semigroup is given by
\begin{equation}\label{T-}
T^-_t\varphi(x)=\inf\bigg\{\varphi(\gamma(0))+\int_0^tL_H(\gamma(s),\dot\gamma(s),T^-_s\varphi(\gamma(s)))\, ds\bigg\},
\end{equation}
where the infimum is taken over all absolutely continuous curves $\gamma:[0,t]\to M$ satisfying $\gamma(t)=x$. The semigroup $T^-_t$ is well-defined for every $\varphi\in C(M)$. If $\varphi$ is continuous, then $u(x,t):=T^-_t\varphi(x)$ is the unique continuous viscosity solution of \eqref{ee}. Moreover, if $\varphi$ is Lipschitz continuous, then $u(x,t)$ is locally Lipschitz continuous on $M\times [0,+\infty)$.
\end{prop}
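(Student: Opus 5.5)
The plan is to reduce the implicit formula \eqref{T-} to a fixed-point problem, exploiting the splitting \eqref{H=}. Under (A1)--(A2), the Lagrangian splits as
\[L_H(x,v,u)=L(x,v)-W(x,u),\]
so $u\mapsto L_H(x,v,u)$ is $\Lambda$-Lipschitz, uniformly in $(x,v)$. Since $G$ is convex and coercive, there exist $\kappa>0$ and $C_0$ with $G(x,p)\ge \kappa|p|-C_0$. Consequently $L(x,v)\le C_0$ whenever $|v|\le\kappa$, and $L$ is bounded below.

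\textbf{Well-posedness of the fixed point.} Fix $T>0$ and $\varphi\in C(M)$. For $v\in C(M\times[0,T])$ define
\[\Phi(v)(x,t):=\inf_{\gamma(t)=x}\Big\{\varphi(\gamma(0))+\int_0^t\big[L(\gamma,\dot\gamma)-W(\gamma(s),v(\gamma(s),s))\big]ds\Big\}.\]
\begin{enumerate}[(i)]
\item \emph{$\Phi(v)$ is finite and continuous.} Upper bounds come from slow curves, which are admissible because $L$ is finite for speeds at most $\kappa$. Joining two nearby points by a short slow curve (a controllability argument) gives continuity in $x$. Continuity in $t$ follows by concatenating with constant curves and using the uniform continuity of $\varphi$.
\item \emph{$\Phi$ is a contraction.} Directly from (A2),
\[|\Phi(v_1)-\Phi(v_2)|(x,t)\le \Lambda\int_0^t\|v_1(\cdot,s)-v_2(\cdot,s)\|_\infty\,ds .\]
Hence $\Phi$ is a contraction for the weighted norm $\sup_t e^{-2\Lambda t}\|v(\cdot,t)\|_\infty$.
\end{enumerate}
The unique fixed point defines $T^-_t\varphi$ on $[0,T]$. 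Uniqueness makes these definitions consistent for different $T$, so $T^-_t\varphi$ is defined for all $t>0$.

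\textbf{Viscosity solution and uniqueness.} Next I would prove the dynamic programming identity
\[T^-_{t+s}\varphi=T^-_t(T^-_s\varphi).\]
This holds because the restriction of the fixed point to $[s,s+t]$ solves the fixed-point problem with initial datum $T^-_s\varphi$. With the DPP in hand, the standard test-function argument shows that $u$ is a viscosity solution of \eqref{ee}:
\begin{itemize}
\item for the subsolution property, use curves with constant velocity $v$ near the touching point and take the supremum over $v$;
\item for the supersolution property, use near-minimizing curves together with the Fenchel inequality $\langle p,v\rangle\le L(x,v)+G(x,p)$.
\end{itemize}
Uniqueness follows from a comparison principle for \eqref{ee}. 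Since $H$ is $\Lambda$-Lipschitz in $u$, the substitution $\tilde u=e^{-2\Lambda t}u$ produces a Hamiltonian that is strictly increasing in $\tilde u$. The standard doubling-of-variables argument then applies; only continuity and convexity of $G$ are needed, after localizing to bounded gradients.

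\textbf{Local Lipschitz regularity.} Let $\varphi$ be Lipschitz.
\begin{enumerate}[(i)]
\item Since $\varphi\pm Ct$ are sub- and supersolutions of \eqref{ee} for $C$ large (here $C$ depends on $\mathrm{Lip}(\varphi)$ and on a bound for $W$ near $\varphi$), comparison gives $\|T^-_h\varphi-\varphi\|_\infty\le Ch$ for small $h$.
\item The contraction estimate and Gronwall give $\|T^-_t\varphi_1-T^-_t\varphi_2\|_\infty\le e^{\Lambda t}\|\varphi_1-\varphi_2\|_\infty$.
\item Combining these with the semigroup property yields
\[\|u(\cdot,t+h)-u(\cdot,t)\|_\infty\le e^{\Lambda t}Ch,\]
so $u_t$ is locally bounded.
\item The equation then gives $G(x,Du)\le C'$ in the viscosity sense on $M\times[0,T]$. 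By coercivity, $u(\cdot,t)$ is Lipschitz with a constant that is locally uniform in $t$.
\end{enumerate}

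\textbf{Expected main obstacle.} I expect the hardest part to be the regularity near $t=0$ and the supersolution property when $G$ is merely continuous and $L$ may take the value $+\infty$. In that setting minimizers need not exist and $L$ has no regularity in $x$. I would handle this by first modifying $G$ outside a large ball, as in Section 2, so that $L$ is finite and superlinear; the a priori gradient bound from step (iv) shows that this modification does not affect the solution.
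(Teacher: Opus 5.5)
The paper does not prove this proposition; it quotes it from \cite{NWY,NW}. Several parts of your plan form a sound self-contained route:
\begin{itemize}
\item the contraction in the weighted norm, using $L_H=L-W$ and $|\partial_uW|\le\Lambda$;
\item the semigroup property obtained from uniqueness of the fixed point;
\item the $e^{\Lambda t}$-stability;
\item the ``Lipschitz in $t$, then coercivity'' argument for Lipschitz data.
\end{itemize}
The genuine gap is uniqueness among \emph{continuous} viscosity solutions. The plain doubling-of-variables argument does not close here. Since $G$ is only continuous in $x$, you need the penalization gradients, of size $d(x_\ep,y_\ep)/\ep$, to stay bounded. In the evolution problem coercivity gives no such bound, because the time derivative of the test function is uncontrolled. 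There is also nothing to ``localize'' to: neither an arbitrary continuous solution nor $T^-_t\varphi$ itself (for merely continuous $\varphi$, near $t=0$) is Lipschitz in $x$. Convexity does not help in that argument either.

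What is missing is a regularization of the subsolution, for example $u^\eta(x,t)=\sup_s\{u(x,s)-|t-s|^2/\eta\}$. Because $G$ does not depend on $t$, $u^\eta$ is again a subsolution up to an error $o(1)$ as $\eta\to0$, which comes from the $W$-term and is controlled by $|\partial_uW|\le\Lambda$. Moreover $u^\eta$ is Lipschitz in $t$, hence Lipschitz in $x$ by the slicing argument of your step (iv) together with coercivity. Doubling against a continuous supersolution then works, since one of the two functions is Lipschitz, and you let $\eta\to0$, with the usual care at $t=0$. Alternatively, show via sub- and super-optimality that every continuous viscosity solution satisfies the implicit formula, and then invoke uniqueness of your fixed point.

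The remedy in your last paragraph fails, and it is also unnecessary. Modifying $G$ outside a ball changes the variational formula whenever the solution has large gradients. That is exactly the situation for merely continuous $\varphi$ near $t=0$, so step (iv) provides no a priori bound there. For instance, take $G=|p|$ and $W=0$ on a circle, with $\varphi(y)=-\sqrt{|y|}$ near $0$. Then $T^-_t\varphi(0)=-\sqrt t$, while any superlinear convex modification agreeing with $|p|$ on a ball gives a strictly smaller value at $(0,t)$ for small $t$. Even for Lipschitz $\varphi$, identifying the modified fixed point with the one built from the original $L$ presupposes that the original formula yields a viscosity solution, which is circular.

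Instead, work directly with the original $L$. It is automatically superlinear because $G$ is finite: $L(x,v)\ge A|v|-\max_{|p|\le A}G(x,p)$. Consequently, near-minimizers on $[t_0-h,t_0]$ travel a distance $o(1)$ as $h\to0$. This is all that the supersolution step and the continuity of $\Phi(v)$ require. Note that concatenating with constant curves gives only one of the two inequalities for continuity in $t$; the other comes from cutting off an initial piece of a near-minimizer and using this superlinearity. For the subsolution step, test only with velocities $v$ in the interior of ${\rm dom}\,L(x_0,\cdot)$, which contains $\{|v|<\kappa\}$. There $L$ is jointly continuous, and by convexity the supremum of $\langle p,v\rangle-L(x_0,v)$ over such $v$ is still $G(x_0,p)$.
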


\begin{prop}\label{Tprop}
The backward semigroup satisfies the following properties:
\begin{itemize}
\item[(1)] For $\varphi_1,\ \varphi_2\in C(M)$, if $\varphi_1(x)<\varphi_2(x)$ for all $x\in M$, then
\[T^-_t\varphi_1(x)<T^-_t\varphi_2(x)\quad \text{for all }(x,t)\in M\times(0,+\infty);\]
\item[(2)] For any $\varphi_1,\varphi_2\in C(M)$, 
\[\|T^-_t\varphi_1-T^-_t\varphi_2\|_\infty\leq e^{\Lambda t}\|\varphi_1-\varphi_2\|_\infty.\]
\end{itemize}
\end{prop}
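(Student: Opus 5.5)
The plan is to derive both properties from a comparison principle for \eqref{ee} on $M\times[0,T]$ for every finite $T>0$. Two explicit perturbations, depending only on $t$, turn solutions into strict sub- or supersolutions, and (A2) absorbs the resulting change in the zeroth-order term. Write $u_i(x,t):=T^-_t\varphi_i(x)$; by Proposition \ref{ppT-} these are the continuous viscosity solutions of \eqref{ee} with initial data $\varphi_i$.

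\emph{Comparison step.} I will use the following comparison principle. If $v$ is a continuous subsolution and $w$ a continuous supersolution of $u_t+G(x,Du)+W(x,u)=0$ on $M\times(0,T)$ with $v(\cdot,0)\le w(\cdot,0)$, then $v\le w$. Since $u\mapsto W(x,u)$ is only Lipschitz with constant $\Lambda$ and need not be monotone, I first change variables $\tilde v=e^{-\lambda t}v$, $\tilde w=e^{-\lambda t}w$ with $\lambda>\Lambda$. The resulting Hamiltonian $\lambda u+e^{-\lambda t}\big(G(x,e^{\lambda t}p)+W(x,e^{\lambda t}u)\big)$ is strictly increasing in $u$. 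The usual doubling of variables on the compact set $M\times[0,T]$ then applies, with the penalization $|x-y|^2/\alpha+|t-s|^2/\alpha$ and a small $-\eta/(T-t)$ term. The step needing care is that $G$ is merely continuous in $(x,p)$ and the solutions are merely continuous, so the gradients at the doubling points are not a priori bounded. I expect this to be the main obstacle. I would handle it as follows: first reduce to Lipschitz (in $x$) data by approximation, using the coercivity in (A1) to bound the relevant test gradients. Then invoke uniform continuity of $G$ on $M\times\{|p|\le R\}$. This is the same comparison that underlies the uniqueness statement of Proposition \ref{ppT-}, so I would simply cite it from \cite{NWY,NW}.

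\emph{Proof of (2).} Let $\delta:=\|\varphi_1-\varphi_2\|_\infty$ and set $w:=u_2+\delta e^{\Lambda t}$. A smooth test function touching $w$ from below is a test function for $u_2$ plus the $C^1$ function $\delta e^{\Lambda t}$. Hence, at such points, using $W(x,u_2+\delta e^{\Lambda t})\ge W(x,u_2)-\Lambda\delta e^{\Lambda t}$ from (A2),
\[
w_t+G(x,Dw)+W(x,w)\ \ge\ \big(u_{2,t}+G(x,Du_2)+W(x,u_2)\big)+\Lambda\delta e^{\Lambda t}-\Lambda\delta e^{\Lambda t}\ \ge 0
\]
in the viscosity sense. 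So $w$ is a supersolution with $w(\cdot,0)=\varphi_2+\delta\ge\varphi_1$, and comparison gives $u_1\le u_2+\delta e^{\Lambda t}$. Exchanging the roles of $\varphi_1$ and $\varphi_2$ yields the bound $\|T^-_t\varphi_1-T^-_t\varphi_2\|_\infty\le e^{\Lambda t}\|\varphi_1-\varphi_2\|_\infty$.

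\emph{Proof of (1).} Since $M$ is compact and $\varphi_2-\varphi_1$ is continuous and positive, there is $\delta>0$ with $\varphi_1+\delta\le\varphi_2$. Set $v:=u_1+\delta e^{-\Lambda t}$. The same computation, now with $W(x,u_1+\delta e^{-\Lambda t})\le W(x,u_1)+\Lambda\delta e^{-\Lambda t}$, shows
\[
v_t+G(x,Dv)+W(x,v)\le 0,
\]
so $v$ is a subsolution with $v(\cdot,0)\le\varphi_2$. Comparison then gives
\[
T^-_t\varphi_1(x)+\delta e^{-\Lambda t}\le T^-_t\varphi_2(x),
\]
which is strict for every $(x,t)\in M\times(0,+\infty)$.
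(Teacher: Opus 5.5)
Your proof is correct, but it takes a different route from the one behind the statement. The paper does not prove Proposition \ref{Tprop}; it quotes it from \cite{NWY,NW}. In that semigroup framework the natural argument, and the one the paper itself runs in Steps 1--2 of the proof of Theorem \ref{thm1} and in Lemma \ref{Q}, is variational. One works directly with \eqref{T-}, uses a minimizing curve for one of the two values at $(x,t)$ as a competitor for the other, and applies Gronwall to the difference along that curve. The only input is $|L_H(x,v,u)-L_H(x,v,u')|=|W(x,u)-W(x,u')|\le\Lambda|u-u'|$.

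You replace this by a comparison principle plus the explicit barriers $u_2+\delta e^{\Lambda t}$ and $u_1+\delta e^{-\Lambda t}$. Your test-function computations are right, and this route gives the quantitative separation $T^-_t\varphi_2-T^-_t\varphi_1\ge e^{-\Lambda t}\min_M(\varphi_2-\varphi_1)$ in one stroke. The variational route, in exchange, needs no comparison principle for the merely continuous $G$. It also yields estimates along individual curves, which is what the paper exploits later when it integrates $\partial_uW$ along calibrated curves.

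Since your whole argument rests on comparison, you should state that step precisely. A comparison principle for arbitrary continuous sub/supersolutions cannot be ``reduced to Lipschitz data'', because such functions need not arise from initial data. Also, coercivity alone does not bound the spatial gradients at the doubling points of the evolution equation, since the time derivative can compensate. What suffices is the following:
\begin{itemize}
\item For Lipschitz $\varphi_i$, Proposition \ref{ppT-} makes $u_1$ Lipschitz on $M\times[0,T]$. So in both of your comparisons the subsolution ($u_1+\delta e^{-\Lambda t}$, resp.\ $u_1$) is Lipschitz, and $d(x_\alpha,y_\alpha)/\alpha$ is bounded by its Lipschitz constant.
\item Uniform continuity of $G$ on $M\times\{|p|\le R\}$, together with your $e^{-\lambda t}$ rescaling, then closes the doubling argument.
\item For continuous data, approximate by Lipschitz $\varphi_i^n$. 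Part (2) for Lipschitz data makes $T^-_t\varphi_i^n$ uniformly Cauchy on $M\times[0,T]$. Stability and the uniqueness in Proposition \ref{ppT-} identify the limit with $T^-_t\varphi_i$.
\item In (1), approximating within $\delta/4$ leaves the margin $\varphi_1^n+\delta/2\le\varphi_2^n$, so strictness survives the limit.
\end{itemize}
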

\begin{defn}\label{defb}
A function $u_-\in C(M)$ is called a backward weak KAM solution of \eqref{se} if the following hold:
\begin{itemize}
\item[(1)] For every absolutely continuous curve $\gamma:[t',t]\to M$, 
\[u_-(\gamma(t))-u_-(\gamma(t'))\leq \int_0^tL_H(\gamma(s),\dot\gamma(s),u_-(\gamma(s)))\, ds.\]
In this case, we say that $u_-$ is dominated by $L_H$, and write $u_-\prec L_H$.
\item [(2)] For each $x\in M$, there exists an absolutely continuous curve $\gamma_-:(-\infty, 0] \to M$ with $\gamma_-(0) = x$ such that for all $t>0$,
\[u_-(\gamma_-(x))-u_-(\gamma_-(-t))\leq \int_{-t}^0L_H(\gamma_-(s),\dot\gamma_-(s),u_-(\gamma_-(s)))\, ds.\]
Such curves are called $u_-$-calibrated curves.
\end{itemize}
\end{defn}

\begin{prop}
Let $u_-\in C(M)$. The following statements are equivalent:
\begin{itemize}
\item[(1)] $u_-$ is a fixed point of $T^-_t$;
\item[(2)] $u_-$ is a backward weak KAM solution of \eqref{se};
\item[(3)] $u_-$ is a viscosity solution of \eqref{se}.
\end{itemize}
\end{prop}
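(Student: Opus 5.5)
The plan is to prove the cycle of implications $(3)\Rightarrow(1)\Rightarrow(2)\Rightarrow(3)$, using Proposition \ref{ppT-} as the main tool. Throughout, $G$ is modified outside a large ball as in Section \ref{Sec2}, so that $L_H$ is uniformly superlinear in $v$ and bounded below on compact sets of $u$-values.

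\emph{$(3)\Rightarrow(1)$.} If $u_-$ is a viscosity solution of \eqref{se}, then $v(x,t):=u_-(x)$ is a continuous viscosity solution of \eqref{ee} with initial datum $\varphi=u_-$. Indeed, any test function $\psi(x,t)$ touching $v$ at $(x_0,t_0)$ with $t_0>0$ has $\psi_t(x_0,t_0)=0$, and $\psi(\cdot,t_0)$ touches $u_-$ at $x_0$. By the uniqueness part of Proposition \ref{ppT-}, $T^-_tu_-=u_-$ for all $t>0$.

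\emph{$(1)\Rightarrow(2)$.} Insert $T^-_su_-=u_-$ into \eqref{T-}. This gives, for every $t>0$ and every $x$,
\[u_-(x)=\inf_{\gamma(t)=x}\Big\{u_-(\gamma(0))+\int_0^tL_H(\gamma,\dot\gamma,u_-(\gamma))\,ds\Big\}.\]
\begin{itemize}
\item \emph{Domination.} Comparing with an arbitrary curve shows $u_-(\gamma(t))-u_-(\gamma(t'))\le\int_{t'}^tL_H(\gamma,\dot\gamma,u_-(\gamma))\,ds$ after a time shift. This is $u_-\prec L_H$.
\item \emph{Minimizers on finite intervals.} Since $u_-$ is bounded, $L_H(x,v,u_-(x))$ is lower semicontinuous, convex in $v$ and bounded below. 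Take a minimizing sequence on $[-t,0]$ ending at $x$. Its actions are bounded, so superlinearity gives equi-integrable velocities. By Dunford--Pettis and Lemma \ref{lem:ls}, a subsequence converges to a minimizer $\gamma_t$ attaining equality.
\item \emph{Calibrated curve on $(-\infty,0]$.} Restrictions of minimizers are still minimizers, and any minimizer has action bounded by $2\|u_-\|_\infty$ uniformly in $t$. Hence the curves $\gamma_n$ for $t=n$ are equi-absolutely continuous on each $[-k,0]$. A diagonal argument combined with Lemma \ref{lem:ls} yields $\gamma_-:(-\infty,0]\to M$ with $\gamma_-(0)=x$ and action $\le u_-(x)-u_-(\gamma_-(-t))$ for every $t$, which is the calibration inequality.
\end{itemize}

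\emph{$(2)\Rightarrow(3)$.} Two halves:
\begin{itemize}
\item \emph{Subsolution.} Let $\psi\in C^1$ touch $u_-$ from above at $x_0$. For $v$ with $L_H(x_0,v,u_-(x_0))<\infty$, use domination along the short curve $\gamma(s)=\exp_{x_0}((s-h)v)$, $s\in[0,h]$. Letting $h\to0$ gives $\langle D\psi(x_0),v\rangle\le L_H(x_0,v,u_-(x_0))$. Taking the supremum over $v$ and using convexity and Fenchel duality ($H$ is the Legendre transform of $L_H$) yields $H(x_0,D\psi(x_0),u_-(x_0))\le0$.
\item \emph{Supersolution.} Let $\psi$ touch $u_-$ from below at $x_0$, and use the calibrated curve $\gamma_-$ ending at $x_0$. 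This gives
\[\frac1h\int_{-h}^0\langle D\psi(\gamma_-),\dot\gamma_-\rangle\,ds\ \ge\ \frac1h\int_{-h}^0L_H(\gamma_-,\dot\gamma_-,u_-(\gamma_-))\,ds.\]
Apply the Fenchel inequality $L_H\ge\langle D\psi(x_0),\dot\gamma_-\rangle-H(\cdot,D\psi(x_0),u_-)$ and control the errors by continuity of $H$, $D\psi$ and $u_-$. One gets $H(x_0,D\psi(x_0),u_-(x_0))\ge -o(1)\cdot\frac1h\int_{-h}^0|\dot\gamma_-|\,ds$, and the factor $\frac1h\int_{-h}^0|\dot\gamma_-|$ is bounded uniformly as $h\to0$.
\end{itemize}

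\emph{Main obstacle.} The hardest step is the supersolution half. Since $L_H$ may be $+\infty$ and $\gamma_-$ is only absolutely continuous, I must show that $\frac1h\int_{-h}^0|\dot\gamma_-|$ stays bounded as $h\to0$. I would first try to prove that calibrated curves are Lipschitz. The argument would compare the action of $\gamma_-$ with that of a reparametrized curve and use superlinearity together with (A2) to bound the velocity. Failing that, I would use the fact that $u_-$ is Lipschitz, since it is a subsolution with coercive $G$ and bounded $W$. Then the calibration identity and superlinearity give $\frac1h\int_{-h}^0\Theta(|\dot\gamma_-|)\le C$, which by Jensen bounds the averaged speed.
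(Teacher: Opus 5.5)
The paper gives no proof of this proposition. It is one of the facts imported from \cite{NWY,NW} in Section~\ref{Sec2}, so your cycle $(3)\Rightarrow(1)\Rightarrow(2)\Rightarrow(3)$ is a self-contained substitute. It is the standard weak KAM argument and is sound in outline.
\begin{itemize}
\item $(3)\Rightarrow(1)$: using the time-independent solution together with the uniqueness in Proposition~\ref{ppT-} is fine.
\item $(1)\Rightarrow(2)$: your remark that restrictions of minimizers still calibrate gives action bounds on $[-k,0]$ uniform in $n$, which is exactly what the diagonal argument with Lemma~\ref{lem:ls} needs. You also prove the correct, nontrivial direction of the calibration inequality; Definition~\ref{defb}(2) is misprinted.
\item Supersolution half of $(2)\Rightarrow(3)$: your second option works. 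With $\ell=\mathrm{Lip}(u_-)$ and $L(x,v)\ge(\ell+1)|v|-C$, calibration gives $\int_{-h}^0|\dot\gamma_-|\,ds\le Ch$. This is the same absorption trick the paper uses in Lemma~\ref{d<ep} and in the proof of Lemma~\ref{tc}.
\end{itemize}
Two points need repair, though neither needs a new idea.

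\textbf{Drop the opening modification of $G$.} It is unnecessary. The unmodified $L$ already satisfies $L(x,v)\ge K|v|-\max_{x\in M,|p|\le K}G(x,p)$ for every $K$, and $L\ge-\max_{x}G(x,0)$. It is also not legitimate as a global step. In Section~\ref{Sec2} it is justified only for $G(x,Du)=c$, whose subsolutions are equi-Lipschitz. Here it would change $L_H$, hence $T^-_t$ and statement (1) itself. It also contradicts your later, correct, remark that $L_H$ may be $+\infty$.

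\textbf{The subsolution step fails at some velocities.} Letting $h\to0$ in $\frac1h\int_0^hL(\gamma,\dot\gamma)\,ds$ needs upper semicontinuity of $L$ at $(x_0,v)$. This fails at boundary points of $\mathrm{Dom}\,L(x_0,\cdot)$. For example, on $\T^1$ take $G(x,p)=a(x)|p|$ with $a>0$, $a'(x_0)>0$, and $v=a(x_0)$. Then $L(x_0,v)=0$. But along $\gamma(s)=x_0+(s-h)v$ one has $\dot\gamma=v>a(\gamma(s))$ for $s<h$ and $h$ small, so the right-hand side is $+\infty$ and you learn nothing.

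The fix is to restrict to $v$ in the interior $U$ of $\mathrm{Dom}\,L(x_0,\cdot)$.
\begin{itemize}
\item $U$ is nonempty: a convex coercive $G(x_0,\cdot)$ grows at least linearly, so $U$ contains a ball around $0$.
\item $L$ is bounded near $(x_0,v)$ for $v\in U$. At $x_0$ one has $G(x_0,p)\ge\langle p,v\rangle+r|p|-C$. On a large sphere $|p|=R$ this persists for $x$ near $x_0$ by continuity, and it extends to all $|p|\ge R$ (with $r/2$) by convexity along rays.
\item Boundedness plus convexity in $v$ gives continuity there, since the supremum defining $L$ can then be taken over a bounded set of $p$. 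So your limit is valid for $v\in U$.
\item Finally, $\phi(w):=\langle p,w\rangle-L(x_0,w)$ is concave. Hence $\phi((1-\lambda)v_1+\lambda v)\ge(1-\lambda)\phi(v_1)+\lambda\phi(v)$ for $v_1\in U$ and $\lambda\in(0,1)$. So $\sup_U\phi=\sup\phi=G(x_0,p)$, and you still obtain $H(x_0,D\psi(x_0),u_-(x_0))\le0$.
\end{itemize}
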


To prove Theorem \ref{thm3}, we introduce the forward semigroup
\[T^+_t\varphi(x)=\sup\bigg\{\varphi(\gamma(t))-\int_0^tL_H(\gamma(s),\dot\gamma(s),T^+_{t-s}\varphi(\gamma(s)))\, ds\bigg\},\]
where the supremum is taken over all absolutely continuous curves $\gamma:[0,t]\to M$ with $\gamma(0)=x$. The forward semigroup is well-defined for $\varphi\in C(M)$ and enjoys properties analogous to those in Proposition \ref{Tprop}. In particular, one can define forward weak KAM solutions of \eqref{se} as fixed points of $T^+_t$.
\begin{prop}\label{limsup}
If $T^-_t\varphi$ uniformly bounded in $t$, then the lower half-relaxed limit
\[\check \varphi(x)=\lim_{r\to 0^+}\inf\Big\{T^-_t\varphi(y):\ d(x,y)<r,\ t>\frac{1}{r}\Big\}\]
is a Lipschitz continuous solution of \eqref{se}.

If $T^+_t\varphi$ is uniformly bounded in $t$, then the upper half-relaxed limit
\[\hat \varphi(x)=\lim_{r\to 0^+}\sup\Big\{T^+_t\varphi(y):\ d(x,y)<r,\ t>\frac{1}{r}\Big\}\]
is a Lipschitz continuous forward weak KAM solution of \eqref{se}.
\end{prop}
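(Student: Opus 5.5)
The plan is to work only with the semigroup: monotonicity (Proposition \ref{Tprop}), the semigroup property $T^-_{s+t}=T^-_t\circ T^-_s$, and the variational formula \eqref{T-}. The uniform bound $\|T^-_t\varphi\|_\infty\le K_0$ is the only quantitative input. Set $u(x,t)=T^-_t\varphi(x)$. The same plan, with inequalities reversed, handles $T^+_t$ and $\hat\varphi$, so I describe only the backward case.

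\emph{Step 1: a one-sided Lipschitz estimate, uniform in time.} For $x,y\in M$ with $\tau:=d(x,y)$ small, I would plug into \eqref{T-}, in the form $u(y,s+\tau)\le u(x,s)+\int_0^\tau L_H(\dots)$, a unit-speed geodesic from $x$ to $y$. By (A1) and continuity of $G$, the Lagrangian $L$ is bounded on $\{|v|\le 1\}$. By (A2) and $|u|\le K_0$, the term $W(\gamma,u(\gamma,\cdot))$ is bounded. This gives
\[u(y,s+\tau)\le u(x,s)+K\,d(x,y),\]
with $K$ independent of $s$. Now take a sequence $(x_n,s_n)\to(x,+\infty)$ realizing $\check\varphi(x)$. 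With $y_n\to y$ chosen so that $d(x_n,y_n)=d(x,y)$, we have $s_n+\tau\to+\infty$, and hence $\check\varphi(y)\le\check\varphi(x)+K d(x,y)$. Exchanging $x$ and $y$ shows that $\check\varphi$ is $K$-Lipschitz, in particular finite and continuous.

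\emph{Step 2: supersolution property.} Define the lower envelopes $v_s:=\inf_{\sigma\ge s}T^-_\sigma\varphi$. Monotonicity gives $T^-_tv_s\le T^-_{t+\sigma}\varphi$ for every $\sigma\ge s$, hence $T^-_tv_s\le v_{s+t}$. Step 1 also yields the one-sided estimate for $v_s$, so the $v_s$ are equicontinuous. Since they increase to the relaxed limit, the convergence $v_s\to\check\varphi$ is uniform (Dini's theorem). The Lipschitz continuity of $T^-_t$ in Proposition \ref{Tprop}(2) then lets me pass to the limit:
\[T^-_t\check\varphi\le\check\varphi\quad\text{for all } t\ge0.\]
In viscosity terms this says $\partial_t T^-_t\check\varphi\le0$ at $t=0$. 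By the usual test-function argument it gives that $\check\varphi$ is a supersolution of \eqref{se}. Equivalently, one can invoke the Barles--Perthame stability of lower relaxed limits, applied to the shifted solutions $u(\cdot,\cdot+s)$, and note that the limit is independent of $t$.

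\emph{Step 3: subsolution property (main obstacle).} Lower relaxed limits are not stable for subsolutions, so this is where the real work lies. I would prove the domination $\check\varphi\prec L_H$ of Definition \ref{defb} directly. Fix a curve $\gamma:[t',t]\to M$ and a point $x$ near $\gamma(t')$ together with a time $s$ nearly realizing $\check\varphi(\gamma(t'))$. Step 1 lets me modify $\gamma$ by short geodesic pieces at a cost of $O(d)$. Formula \eqref{T-} then gives
\[u(\gamma(t),s+t-t')\le u(\gamma(t'),s)+\int L_H(\gamma,\dot\gamma,u(\gamma,\cdot)).\]
The left side is $\ge\check\varphi(\gamma(t))-o(1)$. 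The difficulty is the integrand: I need $u(\gamma(\sigma),s+\sigma)$ to be close to $\check\varphi(\gamma(\sigma))$ along the whole curve, not only at its endpoint. Since $L_H=L-W$ and $W$ is $\Lambda$-Lipschitz in $u$, the error is at most $\Lambda\int|u-\check\varphi|$. I would try to control it by choosing $s$ along a sequence where $v_s$ nearly equals $T^-_s\varphi$ uniformly, using the uniform convergence $v_s\to\check\varphi$ and the fact that $T^-_\sigma\varphi\ge v_s$ for all $\sigma\ge s$. This gives the lower bound one way. The other direction follows by replacing the curve with a $T^-$-minimizer and applying Gronwall with constant $\Lambda$, as in \cite{NWY}.

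Once $\check\varphi$ is Lipschitz, a subsolution and a supersolution, it is a solution of \eqref{se}. The statement for $\hat\varphi$ follows by the symmetric argument with $T^+_t$, upper envelopes and suprema.
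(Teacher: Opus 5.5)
The paper does not prove this proposition; it quotes it from \cite{NWY,NW}, so your argument is judged on its own. Steps 1 and 2 are sound. Your observation that the time-shifted estimate passes to $v_s=\inf_{\sigma\ge s}T^-_\sigma\varphi$ is the right starting point: it makes the $v_s$ equi-Lipschitz, gives $v_s\to\check\varphi$ uniformly, and yields $T^-_t\check\varphi\le\check\varphi$. One small correction: under (A1) alone, $L$ can be $+\infty$ inside $\{|v|\le 1\}$ (e.g.\ $G=\frac12|p|$). Use instead that convexity and coercivity give $G(x,p)\ge a|p|-b$, so $L\le b$ on $\{|v|\le a\}$, and traverse the geodesic at speed $a$.

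The genuine gap is Step 3. Your key device is a sequence $s_n\to\infty$ with $\|T^-_{s_n}\varphi-v_{s_n}\|_\infty\to 0$, i.e.\ $T^-_{s_n}\varphi\to\check\varphi$ uniformly. The hypotheses do not supply it, and it fails whenever the bounded orbit keeps oscillating. The reason is that $\check\varphi$ is a liminf taken separately at each point, and different points may attain it at different times. For a non-stationary time-periodic orbit, such a sequence would force $T^-_{s^*}\varphi=\check\varphi$ for some $s^*$. Since $\check\varphi$ is a fixed point by the very statement you are proving, the orbit would then be stationary, a contradiction.

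Even granting the sequence, controlling $T^-_{s+\sigma}\varphi=T^-_\sigma(T^-_s\varphi)$ for $\sigma>0$ would require $T^-_\sigma\check\varphi\approx\check\varphi$, which is circular. More fundamentally, domination along an \emph{arbitrary} curve is the wrong target. Along a general curve neither $T^-_{\cdot}\varphi$ nor $\check\varphi$ satisfies an equality, so you only get two inequalities pointing the same way and no Gronwall comparison is possible. Your closing sentence about ``the other direction'' names a minimizer and Gronwall, but domination has no second direction, and the sentence does not say what is compared or where closeness is needed.

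What works is to prove $T^-_t\check\varphi\ge\check\varphi$, which needs closeness at a single point only.

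1. Fix $x$, $t$ and a minimizer $\gamma:[0,t]\to M$ of $T^-_t\check\varphi(x)$. Since $\check\varphi=\lim_s v_s$ is the pointwise liminf, choose $\sigma_n\to\infty$ with $T^-_{\sigma_n}\varphi(\gamma(0))\to\check\varphi(\gamma(0))$.
2. Set $\bar w_n(s):=T^-_{\sigma_n+s}\varphi(\gamma(s))-T^-_s\check\varphi(\gamma(s))$. Calibration of $\gamma$ for $s\mapsto T^-_s\check\varphi$, together with the sub-optimality of $\gamma$ for $T^-_{\sigma_n+\cdot}\varphi$, gives $\bar w_n(s_2)-\bar w_n(s_1)\le\Lambda\int_{s_1}^{s_2}|\bar w_n(s)|\,ds$.
3. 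Monotonicity gives $T^-_{\sigma_n+s}\varphi\ge T^-_s v_{\sigma_n}$, hence $\bar w_n\ge-\eta_n$ with $\eta_n:=e^{\Lambda t}\|v_{\sigma_n}-\check\varphi\|_\infty\to 0$.
4. Therefore $|\bar w_n|\le \bar w_n+2\eta_n$, and Gronwall yields $\bar w_n(t)\le e^{\Lambda t}(\bar w_n(0)+2\eta_n)\to 0$.
5. Since $T^-_{\sigma_n+t}\varphi(x)\ge v_{\sigma_n+t}(x)\to\check\varphi(x)$, you obtain $\check\varphi(x)\le T^-_t\check\varphi(x)$.

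Together with Step 2, $\check\varphi$ is a fixed point of $T^-_t$, hence a solution by the equivalence recalled in Section~\ref{Sec2}; no test-function argument is needed. The forward case is symmetric.
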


\begin{prop}\label{u-u+}
Let $u_-$ be a solution of \eqref{se}. Then $T^+_tu_-\leq u_-$ for all $t>0$. The limit $u_+:= \lim_{t\to+\infty}T^+_t u_-$ exists, and $u_+$ is a forward weak KAM solution of \eqref{se}.

Conversely, let $u_+$ be a forward weak KAM solution of \eqref{se}. Then $T^-_tu_+\geq u_+$ for all $t>0$. The limit $u_-:= \lim_{t\to+\infty}T^-_t u_+$ exists, and $u_-$ is a solution of \eqref{se}.
\end{prop}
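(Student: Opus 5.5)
The plan is to prove the first half (about $u_-$ and $T^+_t$) in four steps: the inequality $T^+_tu_-\le u_-$, monotonicity in $t$, a uniform lower bound, and identification of the limit. The converse half should then follow by the symmetric argument with the roles of $T^-_t$ and $T^+_t$ exchanged.

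\emph{Step 1: $T^+_tu_-\le u_-$.} I would use the domination $u_-\prec L_H$ from Definition \ref{defb}(1) together with the bound $|\partial_u L_H|=|\partial_u W|\le\Lambda$ coming from (A2). Set $\delta(t):=\max_{x\in M}\big(T^+_tu_-(x)-u_-(x)\big)^+$. For any curve $\gamma$ with $\gamma(0)=x$, replacing $T^+_{t-s}u_-(\gamma(s))$ by $u_-(\gamma(s))$ inside $L_H$ costs at most $\Lambda\,\delta(t-s)$, because only the positive part of the difference can increase the quantity being maximized. Domination then gives
\[T^+_tu_-(x)\le \sup_\gamma\Big\{u_-(\gamma(t))-\int_0^tL_H(\gamma,\dot\gamma,u_-(\gamma))\,ds\Big\}+\Lambda\int_0^t\delta(s)\,ds\le u_-(x)+\Lambda\int_0^t\delta(s)\,ds.\]
Hence $\delta(t)\le\Lambda\int_0^t\delta$, and Gronwall's inequality yields $\delta\equiv0$.

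\emph{Step 2: monotonicity.} I would combine the semigroup property with the order preservation of $T^+_t$ (the forward analogue of Proposition \ref{Tprop}(1)). This gives $T^+_{t+s}u_-=T^+_t(T^+_su_-)\le T^+_tu_-$, so $t\mapsto T^+_tu_-$ is nonincreasing.

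\emph{Step 3: a uniform lower bound.} I expect this to be the main obstacle. The first tool is the contact analogue of the classical inequality $T^-_t\circ T^+_t\varphi\ge\varphi$. It should follow directly from the variational formulas: if $\gamma$ is a near-minimizer in the formula \eqref{T-} for $T^-_t(T^+_t\varphi)(x)$, then reversing its role makes $\gamma$ an admissible competitor in the supremum defining $T^+_t\varphi(\gamma(0))$. The $u$-arguments inside $L_H$ have to be compared by the same Gronwall argument as in Step 1.

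Granting this inequality, suppose $T^+_tu_-<u_-$ everywhere on $M$. The strict monotonicity of $T^-_t$ (Proposition \ref{Tprop}(1)) would give
\[u_-\le T^-_tT^+_tu_-<T^-_tu_-=u_-,\]
a contradiction. So for each $t$ there is a point $x_t$ with $T^+_tu_-(x_t)=u_-(x_t)$.

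The second tool is an oscillation bound. I would use equi-Lipschitz, or at least uniform oscillation, bounds on $\{T^+_tu_-\}_{t\ge1}$. Those come from the forward analogue of the regularity behind Proposition \ref{limsup}, since the functions are bounded above by $u_-$; alternatively one can argue via subsolution estimates for $G$ using (A1). Together with the touching point $x_t$, this gives $T^+_tu_-\ge\min u_- -C$ for a constant $C$ independent of $t$. If the needed regularity is not available, I would instead bound the oscillation of $T^+_tu_-$ by evaluating its variational formula on short connecting curves of bounded speed.

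\emph{Step 4: convergence and the limit.} By Steps 2 and 3, the family decreases monotonically and is bounded below, so the pointwise limit $u_+$ exists. Equicontinuity upgrades this to uniform convergence via Dini's theorem. Finally, the Lipschitz dependence $\|T^+_s\varphi_1-T^+_s\varphi_2\|_\infty\le e^{\Lambda s}\|\varphi_1-\varphi_2\|_\infty$ lets me pass to the limit in $T^+_s(T^+_tu_-)=T^+_{t+s}u_-$. This gives $T^+_su_+=u_+$, so $u_+$ is a forward weak KAM solution.

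For the converse, I would repeat the steps with $T^-_t$ in place of $T^+_t$. The domination property of forward weak KAM solutions gives $T^-_tu_+\ge u_+$. The inequality $T^+_tT^-_t\varphi\le\varphi$ supplies a touching point, which yields a uniform upper bound. Monotone convergence then produces a fixed point of $T^-_t$, which is a solution of \eqref{se}.
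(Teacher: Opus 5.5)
The paper does not prove this proposition; it imports it from earlier work, so I judge your argument on its own. Your skeleton is sound, and the touching-point argument via Proposition \ref{T-T+} and the strict monotonicity of $T^-_t$ is correct. However, two steps have genuine gaps, both caused by the non-monotone dependence of $W$ on $u$.

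\textbf{Step 1 fails as written.} Inside the supremum, $T^+_{t-s}u_-$ enters through $+\int_0^t W(\gamma,T^+_{t-s}u_-(\gamma))\,ds$, and (A2) allows $\partial_uW<0$. Where $T^+_{t-s}u_-<u_-$, the replacement error $W(\gamma,T^+_{t-s}u_-)-W(\gamma,u_-)$ can be positive, as large as $\Lambda\,(u_--T^+_{t-s}u_-)$. It is governed by the \emph{negative} part of the difference, which $\delta$ does not see. So $\delta(t)\le\Lambda\int_0^t\delta$ does not follow.

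The fix is to localize the Gronwall argument along a single curve.
\begin{enumerate}
\item Suppose $T^+_tu_-(x)>u_-(x)$. Take an $\eta$-maximizer $\gamma$ and set $F(s):=T^+_{t-s}u_-(\gamma(s))-u_-(\gamma(s))$.
\item Let $\sigma\in(0,t]$ be the first zero of $F$; it exists since $F(t)=0$. On $[0,\sigma)$ you have $F>0$, hence $|F|=F$.
\item The dynamic programming principle along $\gamma$, together with domination, gives $F(s)\le\eta+\Lambda\int_s^\sigma F$ on $[0,\sigma]$.
\item Gronwall gives $F(0)\le\eta e^{\Lambda t}$, and letting $\eta\to0$ yields a contradiction.
\end{enumerate}
This is the device of Lemma \ref{Q}. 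Its mirror image is needed for $T^-_tu_+\ge u_+$.

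\textbf{Steps 3--4: the regularity you invoke is circular.} Propositions \ref{limsup} and \ref{equi} presuppose that $T^+_tu_-$ is bounded in $t$ from both sides, and Proposition \ref{equi} also requires (A1)', which is not assumed. The one-sided bound $T^+_tu_-\le u_-$ does not help. The term $W(\cdot,T^+_{t-s}u_-)$, or equivalently the right-hand side of any subsolution estimate for $G$, grows like $\Lambda|T^+_{t-s}u_-|$.

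Your fallback with connecting curves is the right idea, but it also needs the localized Gronwall argument.
\begin{enumerate}
\item Join $x$ to the touching point $x_{t-T_0}$ by a curve $\alpha$ of fixed duration $T_0$. Keep its speed small enough that $L$ stays finite and bounded; under (A1) alone, $L$ may equal $+\infty$ at large speeds.
\item Apply Gronwall to $F(s)=\min u_- - T^+_{t-s}u_-(\alpha(s))$ up to its first zero, exactly as in Lemma \ref{Q}. This gives a uniform lower bound.
\end{enumerate}

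Dini's theorem in Step 4 also needs equicontinuity, not just an oscillation bound, and you never establish it. Once the two-sided bound holds, Step 2 gives it: a short curve from $x$ to $y$ of duration $\tau\sim d(x,y)$ yields
\[T^+_tu_-(x)\ge T^+_{t-\tau}u_-(y)-C\tau\ge T^+_tu_-(y)-C\tau,\]
since $t\mapsto T^+_tu_-$ is nonincreasing.

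With these repairs, and their mirror images for the converse, your outline goes through.
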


\begin{prop}\label{T-T+}\cite[Proposition 2.6]{NW}. 
For each $\varphi\in C(M)$, we have \[T^+_t\circ T^-_t\varphi\leq \varphi\leq T^-_t\circ T^+_t\varphi\quad \text{for all }t>0.\]
\end{prop}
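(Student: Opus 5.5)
The plan is to prove both inequalities by comparing the two semigroups along a single optimal curve, and closing the comparison with a Gronwall argument. No comparison principle for terminal-value problems is needed. The one structural fact used is that, by \eqref{H=}, $L_H(x,v,u)=L(x,v)-W(x,u)$. Together with (A2) this gives
\[|L_H(x,v,u_1)-L_H(x,v,u_2)|\le\Lambda|u_1-u_2|\]
whenever $L(x,v)<+\infty$.

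\emph{First inequality.} Fix $t>0$ and $x\in M$, and put $u(y,s):=T^-_s\varphi(y)$, $\psi:=u(\cdot,t)$ and $V(y,s):=T^+_{t-s}\psi(y)$ for $s\in[0,t]$.
\begin{itemize}
\item[(i)] Take a maximizing curve $\gamma:[0,t]\to M$, $\gamma(0)=x$, in the formula for $T^+_t\psi(x)$. It exists because $V$ is bounded and continuous, $L$ is superlinear, and Lemma \ref{lem:ls} applies; if only an $\epsilon$-maximizer is available, carry an extra $\epsilon$ and let $\epsilon\to0$ at the end.
\item[(ii)] By the semigroup property of $T^+$, the restriction of $\gamma$ to $[s,t]$ is again (almost) optimal, so for every $s\in[0,t]$
\[V(\gamma(s),s)=\psi(\gamma(t))-\int_s^tL_H\big(\gamma,\dot\gamma,V(\gamma(\tau),\tau)\big)\,d\tau .\]
\item[(iii)] From \eqref{T-} and the semigroup property of $T^-$, $u$ is dominated along every curve: $u(\gamma(t),t)\le u(\gamma(s),s)+\int_s^tL_H(\gamma,\dot\gamma,u)\,d\tau$. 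Equivalently,
\[u(\gamma(s),s)\ge \psi(\gamma(t))-\int_s^tL_H(\gamma,\dot\gamma,u)\,d\tau .\]
\item[(iv)] Set $g(s):=V(\gamma(s),s)-u(\gamma(s),s)$ and subtract. The integrands differ only through $W$, so
\[g(s)\le\int_s^t\big[W(\gamma,V)-W(\gamma,u)\big]\,d\tau\le\Lambda\int_s^t g^+(\tau)\,d\tau\quad(+\epsilon).\]
\item[(v)] Backward Gronwall gives $g^+\equiv0$, up to $\epsilon e^{\Lambda t}$. At $s=0$ this reads $T^+_t\circ T^-_t\varphi(x)\le\varphi(x)$.
\end{itemize}

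\emph{Second inequality.} This is the mirror argument. Put $\psi:=T^+_t\varphi$, $V(y,s):=T^+_{t-s}\varphi(y)$ (so $V(\cdot,0)=\psi$, $V(\cdot,t)=\varphi$) and $U(y,s):=T^-_s\psi(y)$. Take a minimizer $\gamma$ for $T^-_t\psi(x)$ with $\gamma(t)=x$. By dynamic programming it calibrates $U$ on each $[0,s]$, that is,
\[U(\gamma(s),s)=\psi(\gamma(0))+\int_0^sL_H(\gamma,\dot\gamma,U)\,d\tau .\]
From the sup-definition of $T^+$, $V$ satisfies the reverse domination
\[V(\gamma(0),0)\ge V(\gamma(s),s)-\int_0^sL_H(\gamma,\dot\gamma,V)\,d\tau .\]
Setting $g=V-U$ along $\gamma$ gives $g(s)\le\Lambda\int_0^sg^+$, hence $g^+\equiv0$. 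At $s=t$ this yields $\varphi(x)\le T^-_t\circ T^+_t\varphi(x)$.

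\emph{Main obstacle.} The delicate step is the dynamic-programming identity along optimal curves for these \emph{implicitly} defined semigroups, in which the unknown appears inside the Lagrangian. I would derive it from the semigroup properties $T^\pm_{s+\tau}=T^\pm_s\circ T^\pm_\tau$ together with existence of minimizers, which comes from Lemma \ref{lem:ls} using the lower semicontinuity and convexity of $L$ and the boundedness of $W(\cdot,u)$ on bounded sets. One also has to ensure that only curves with $L(\gamma,\dot\gamma)<+\infty$ a.e. are involved, so that the subtraction in step (iv) is legitimate; this holds because optimal curves have finite action.
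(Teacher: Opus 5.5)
Your strategy of comparing $T^+$ and $T^-$ along a single (almost) optimal curve, via dynamic programming and Gronwall, is the right one. However, step (iv) uses a false inequality, and the same error recurs in the second half. Pointwise you only have $|W(\gamma,V)-W(\gamma,u)|\le\Lambda|g|$. The bound $W(\gamma,V)-W(\gamma,u)\le\Lambda g^+$ holds where $g\ge 0$. Where $g<0$ (i.e.\ $V<u$), the left side can be as large as $\Lambda|g|>0$ whenever $W$ is decreasing in $u$. For instance, with $W(x,u)=-\Lambda u$, which satisfies (A2), one gets $W(\gamma,V)-W(\gamma,u)=-\Lambda g$; this non-monotone situation is exactly the one the paper is about. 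What your derivation actually yields is $g(s)\le\Lambda\int_s^t|g|\,d\tau$ with $g(t)=0$, and this alone does not force $g\le 0$: a stretch near $t$ where $g$ is very negative is compatible with $g>0$ earlier. So the backward Gronwall step (v) does not follow. The same goes for ``$g(s)\le\Lambda\int_0^s g^+$'' in the second inequality, where the integrand is $W(\gamma,U)-W(\gamma,V)$.

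The repair uses only ingredients you already have, but you must state the comparison between arbitrary intermediate times rather than only relative to the endpoint. The same semigroup arguments show that restrictions of $\epsilon$-maximizers are $\epsilon$-optimal on every subinterval, and that $u$ is dominated on every subinterval. Together these give, for all $0\le s\le s'\le t$,
\[g(s)\le g(s')+\int_s^{s'}\big[W(\gamma,V)-W(\gamma,u)\big]\,d\tau+\epsilon .\]
Now localize. The function $g$ is continuous on $[0,t]$, by joint continuity of $(y,s)\mapsto T^-_s\varphi(y)$ and $(y,s)\mapsto T^+_{t-s}\psi(y)$ up to the endpoints, and $g(t)=0$. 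If $g(s_0)>0$, let $s_1$ be the first zero of $g$ in $[s_0,t]$. On $[s_0,s_1]$ one has $g\ge 0$, so the integrand is at most $\Lambda g$. Hence $g(s)\le\epsilon+\Lambda\int_s^{s_1}g\,d\tau$ there, and Gronwall gives $g\le\epsilon e^{\Lambda t}$ on $[s_0,s_1]$. In particular $T^+_t\psi(x)-\varphi(x)=g(0)\le\epsilon e^{\Lambda t}$ for every $\epsilon>0$. For the second inequality, run the mirror argument: use $g(0)=0$, take the last zero of $g$ before $s_0$, and use
\[g(s)\le g(s')+\int_{s'}^{s}\big[W(\gamma,U)-W(\gamma,V)\big]\,d\tau\quad\text{for }s'\le s .\]
This first/last-zero localization is the same device the paper uses in Lemma~\ref{Q} and in the choice of $\sigma_0$ in the proof of Proposition~\ref{homo}. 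The paper itself gives no proof of Proposition~\ref{T-T+}; it quotes the result from \cite{NW}.
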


As in \cite[Proposition 1.6]{NW}, one can establish the Erdmann condition for minimizing curves associated with $T^-_t\varphi$ and $T^+_t\varphi$, which yields the following result.
\begin{prop}\label{equi}
We further assume {\rm (A1)'}. Let $\varphi\in C(M)$. If $T^-_t\varphi(x)$ (resp. $T^+_t\varphi(x)$) is uniformly bounded in $t$, then for any $\delta>0$, the family $\{T^-_t\varphi\}_{t\geq \delta}$ (resp. $\{T^+_t\varphi\}_{t\geq \delta}$) is equi-Lipschitz continuous.
\end{prop}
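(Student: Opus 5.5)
The plan is to derive equi-Lipschitz bounds from a priori velocity bounds for minimizing curves, obtained via an Erdmann-type (energy) estimate. This mirrors \cite[Proposition 1.6]{NW}, using the splitting \eqref{H=} and the boundedness of $u(x,t):=T^-_t\varphi(x)$. I treat $T^-_t$; the case of $T^+_t$ is symmetric, reversing time and replacing infima with suprema.

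\textbf{Step 1: reduction to a bounded window.} Let $K:=\sup_{t\ge 0}\|u(\cdot,t)\|_\infty<\infty$. By the semigroup property, for $t\ge\delta$ we have $u(\cdot,t)=T^-_\delta\big(u(\cdot,t-\delta)\big)$, where the initial datum $\psi:=u(\cdot,t-\delta)$ satisfies $\|\psi\|_\infty\le K$. It therefore suffices to show that $T^-_\delta\psi$ is Lipschitz with a constant depending only on $K$, $\delta$ and $H$. Since $L_H(x,v,u)=L(x,v)-W(x,u)$, the potential term is bounded by $C_K:=\sup_{M\times[-K',K']}|W|$, where $K'$ bounds $T^-_s\psi$ for $s\in[0,\delta]$; by Proposition \ref{Tprop} one can take $K'=K$. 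By (A1)', $L$ is finite, superlinear and, by strict convexity of $G$, differentiable in $v$.

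\textbf{Step 2: existence and a priori action bounds for minimizers.} Lemma \ref{lem:ls} and superlinearity give a minimizer $\gamma:[0,\delta]\to M$ in \eqref{T-} with $\gamma(\delta)=x$. Comparing with a constant-speed geodesic from $\gamma(0)$ to $x$ gives
\[\int_0^\delta L(\gamma,\dot\gamma)\,ds\le 2K+2\delta C_K+\delta\max_{|v|\le \diam(M)/\delta}L=:C_1 .\]
Superlinearity then bounds $\int_0^\delta|\dot\gamma|\,ds$, and shows that $|\dot\gamma(s)|\le R_1$ on a set of times of measure at least $\delta/2$.

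\textbf{Step 3: Erdmann condition (main obstacle).} We need a pointwise bound on $|\dot\gamma|$ near $s=\delta$. The difficulty is that the Lagrangian $s\mapsto L(x,v)-W(x,u(x,s))$ is only continuous in $s$, so the classical Du Bois-Reymond/Erdmann argument, which differentiates in time, is not available. I would use inner variations, i.e.\ reparametrizations $\gamma_\lambda(s)=\gamma(\tau_\lambda(s))$ with $\tau_\lambda$ a time change that is the identity outside a subinterval $[a,b]$ and has slope $\lambda$ on a piece of it. Because the curve changes, the $u$-argument $T^-_s\psi(\gamma_\lambda(s))$ in \eqref{T-} also changes. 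I would control this using $|\partial_uL_H|=|\partial_u W|\le\Lambda$ together with the Gronwall-type estimate underlying Proposition \ref{Tprop}(2): the implicit cost of the perturbed curve differs from the original by at most $e^{\Lambda\delta}$ times the explicit change, plus a term $\Lambda\,\mathrm{osc}$ of order $|\lambda-1|\,(b-a)\,C_K$. Comparing with the minimality of $\gamma$ and letting $\lambda\to1^\pm$ yields, for a.e.\ $a<b$,
\[\big|E(b)-E(a)\big|\le C_2,\qquad E(s):=\partial_vL(\gamma,\dot\gamma)\cdot\dot\gamma-L(\gamma,\dot\gamma),\]
with $C_2$ depending on $K,\delta,\Lambda,C_K$. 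Choosing $a$ with $|\dot\gamma(a)|\le R_1$ bounds $E$ on $[0,\delta]$. Since $E(x,v)\ge \Theta$-type superlinear growth in $|v|$ (by (A1)' and Legendre duality), we get $|\dot\gamma(s)|\le R_2$ for a.e.\ $s$, uniformly in $x$ and $t$.

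\textbf{Step 4: Lipschitz estimate.} Given $x,x'$ with $d(x,x')$ small, modify the minimizer $\gamma$ for $x$ on $[\delta-d(x,x'),\delta]$ by concatenating with a geodesic to $x'$. Since velocities are bounded by $R_2+1$, the change in the explicit action is at most $C\,d(x,x')$. The induced change in the $u$-argument is again absorbed by $\Lambda$ and Gronwall, at a cost $e^{\Lambda\delta}C\,d(x,x')$. Hence $T^-_\delta\psi(x')-T^-_\delta\psi(x)\le C_3\,d(x,x')$; by symmetry this is a Lipschitz bound with a constant independent of $t\ge\delta$.
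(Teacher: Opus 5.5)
Your architecture matches the paper's, which simply invokes the Erdmann condition of \cite[Proposition 1.6]{NW}: an Erdmann-type relation along minimizers gives a uniform velocity bound, and rerouting the last piece of the minimizer then gives the Lipschitz bound. Steps 1, 2 and 4 are fine. In Step 4 you do not even need Gronwall, since on $[\delta-d(x,x'),\delta]$ both integrands in \eqref{T-} are bounded once $|T^-_s\psi|\le K$ and the velocities are bounded.

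The gap is in Step 3, which carries all the content.

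\textbf{The fixed-$u$ reading is circular.} In \eqref{T-} the $u$-argument is the \emph{fixed} function $T^-_s\psi$. After the substitution $\sigma=\tau_\lambda(s)$, the term you must control is $T^-_{\tau_\lambda^{-1}(\sigma)}\psi(\gamma(\sigma))-T^-_\sigma\psi(\gamma(\sigma))$, which is a time oscillation of $u$. A priori, constant curves give only the one-sided bound $u(y,s+h)-u(y,s)\le h\,(\max_M L(\cdot,0)+C_K)$. The other side amounts to an upper bound on $G(x,Du)$, which is exactly what you are proving. Since $W$ is not monotone in $u$, a one-sided bound is useless. So the claimed error of order $|\lambda-1|(b-a)C_K$ is circular, and $\Lambda$ plus Gronwall says nothing about a fixed function.

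\textbf{The Herglotz reading gives the wrong error size.} Suppose you mean instead the Herglotz cost along $\gamma_\lambda$: the solution $V$ of $\dot V=L(\gamma_\lambda,\dot\gamma_\lambda)-W(\gamma_\lambda,V)$ with $V(0)=\psi(\gamma(0))$. This does satisfy $T^-_\delta\psi(x)\le V(\delta)$ by a comparison argument from \eqref{T-}. But its deviation from $U(s):=T^-_s\psi(\gamma(s))$ is driven by the change in the explicit action. That change is proportional to $|\lambda-1|\int_a^b(|E|+C_K)\,ds$, not to $|\lambda-1|(b-a)C_K$. Because it is first order in $\lambda-1$, it survives when you divide by $\lambda-1$ and let $\lambda\to1$. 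Because it is proportional to the very energy you want to bound, it can at best be absorbed multiplicatively. Hence the additive inequality $|E(b)-E(a)|\le C_2$, with $C_2$ independent of $E(a)$, does not follow; it holds only a posteriori.

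\textbf{The missing step is an exact linearization of the Herglotz ODE.}
\begin{enumerate}
\item Take the competitor $\gamma\circ\eta_\ep^{-1}$ with $\eta_\ep'=1+\ep h$, $\int_0^\delta h=0$, and $h$ bounded and supported in $\{|\dot\gamma|\le R\}$. This keeps $L(\gamma,\dot\gamma/\eta_\ep')$ integrable and differentiable in $\ep$, since $L$ is $C^1$ in $v$ by strict convexity of $G$.
\item Then $\tilde V_\ep:=V_\ep\circ\eta_\ep$ solves $\frac{d}{d\sigma}\tilde V_\ep=\eta_\ep'\,[L(\gamma,\dot\gamma/\eta_\ep')-W(\gamma,\tilde V_\ep)]$.
\item Set $\mathcal H:=E+W(\gamma,U)=H(\gamma,\partial_vL(\gamma,\dot\gamma),U)$. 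The derivative $Y:=\partial_\ep\tilde V_\ep|_{\ep=0}$ satisfies $\dot Y=-h\,\mathcal H-\partial_uW(\gamma,U)\,Y$ with $Y(0)=0$.
\item Minimality forces $Y(\delta)=0$, that is,
\[
\int_0^\delta h(s)\,\mathcal H(s)\,e^{-\int_s^\delta\partial_uW(\gamma(\sigma),U(\sigma))\,d\sigma}\,ds=0 .
\]
\item Hence $\mathcal H(s)\,e^{-\int_s^\delta\partial_uW(\gamma,U)\,d\sigma}$ is a.e.\ constant on $\{|\dot\gamma|\le R\}$ for every $R$, and therefore a.e.\ on $[0,\delta]$.
\end{enumerate}
This is the integrated form of the contact identity $\frac{d}{ds}H=-\partial_uH\cdot H$. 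It gives the multiplicative bound $|\mathcal H(s)|\le e^{\Lambda\delta}|\mathcal H(a)|$.

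Now choose $a$ with $|\dot\gamma(a)|\le R_1$ as in your Step 2. Use $|W(\gamma,U)|\le C_K$ and the coercivity of $E(x,\cdot)=G(x,\partial_vL(x,\cdot))$. Note that $E$ is coercive but not superlinear in general, contrary to what you state. This yields $|\dot\gamma|\le R_2$ a.e., after which your Step 4 applies.
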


\section{Proof of Theorem \ref{thm1}}\label{Sec3}

We begin with the proof of Proposition \ref{A3}.
\begin{lem}\label{>A}
Assume {\rm (A1)--(A2)} and {\rm (A3)$_1$}. For every Mather measure $\tilde \mu_-$ associated with the Hamiltonian \[H_-(x,p):=H(x,p,u_-(x))=G(x,p)+W(x,u_-(x)),\] there exists a constant $A>0$ such that
\begin{equation}\label{A}
\int_{TM}\partial_u W(x,u_-(x))\, d\tilde\mu_->A.
\end{equation}
\end{lem}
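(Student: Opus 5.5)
We will use the variational characterization of the critical value in Proposition \ref{Mather}, applied to the Hamiltonian
\[
\tilde H(x,p):=H_-(x,p)-\zeta\,\partial_uW(x,u_-(x)).
\]
Since $u_-$ solves \eqref{se}, $H_-$ has critical value $c(H_-)=0$. The Lagrangian of $H_-$ is $L_-(x,v)=L(x,v)-W(x,u_-(x))$. Because $\tilde H$ differs from $H_-$ only by the continuous function $-\zeta\,\partial_uW(x,u_-(x))$ of $x$, its Lagrangian is
\[
\tilde L(x,v)=L_-(x,v)+\zeta\,\partial_uW(x,u_-(x)).
\]
Both $H_-$ and $\tilde H$ satisfy (A1), because adding a bounded continuous function of $x$ preserves continuity, convexity and coercivity. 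Hence Proposition \ref{Mather} applies to each of them.

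Fix a Mather measure $\tilde\mu_-$ of $H_-$. It is a closed probability measure and $\int L_-\,d\tilde\mu_-=-c(H_-)=0$. Applying \eqref{muc} to $\tilde H$, with $\tilde\mu_-$ as a competitor among closed measures, gives
\[
-c(\tilde H)\le \int_{TM}\tilde L\,d\tilde\mu_-=\int_{TM}L_-\,d\tilde\mu_-+\zeta\int_{TM}\partial_uW(x,u_-(x))\,d\tilde\mu_-=\zeta\int_{TM}\partial_uW(x,u_-(x))\,d\tilde\mu_-.
\]
The integral of $\partial_uW(x,u_-(x))$ is finite because this function is bounded by $\Lambda$ and continuous in $x$. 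By (A3)$_1$, $c(\tilde H)<0$. Therefore
\[
\int_{TM}\partial_uW(x,u_-(x))\,d\tilde\mu_-\ \ge\ -\frac{c(\tilde H)}{\zeta}>0.
\]
Setting $A:=-c(\tilde H)/(2\zeta)$ gives \eqref{A}, and this $A$ is uniform over all Mather measures of $H_-$.

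The one point needing care is justifying \eqref{muc} when $L$ may take the value $+\infty$. The equality is stated in Proposition \ref{Mather} under (A1), so we may invoke it directly for $\tilde H$. Only the inequality $-c(\tilde H)\le\int\tilde L\,d\mu$ for every closed $\mu$ is actually used. If one wants to see it directly, take a Lipschitz subsolution of $\tilde H=c(\tilde H)$. By Fenchel's inequality it gives $\langle Dw,v\rangle\le \tilde L(x,v)+c(\tilde H)$ almost everywhere. Integrating this against the closed measure $\mu$, after smoothing $w$ (possible by convexity of $\tilde H$ in $p$), yields the inequality.

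Finally, we check $c(H_-)=0$: $u_-$ is a viscosity solution of $H_-(x,Du)=0$, and by uniqueness of the critical value in \eqref{G=c}, applied to $G+W(x,u_-(x))$, that value must be $0$.
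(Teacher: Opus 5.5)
Your proof is correct and is the same argument as the paper's. Both use a Mather measure of $H_-$ as a competitor in \eqref{muc} for the perturbed Hamiltonian $H_--\zeta\,\partial_uW(x,u_-(x))$ and combine this with $c(H_-)=0$. The one difference is in how uniformity of $A$ is obtained: you note that the lower bound $-c(H_--\zeta\,\partial_uW(x,u_-(x)))/\zeta$ is already independent of the measure, while the paper reaches a uniform $A$ through an extra compactness argument over $\widetilde{\mathfrak M}_-$.
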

\begin{proof}
By (A3)$_1$ and \eqref{muc}, we obtain
\[\min_{\tilde \mu}\int_{TM}\Big(L(x,v)-W(x,u_-(x))+\zeta \cdot \partial_u W(x,u_-(x)\Big)\, d\tilde\mu>0,\]
where $\tilde{\mu}$ is taken over all closed measures on $TM$. Let $\tilde \mu_-$ be a Mather measure associated with the Hamiltonian $H_-$. Since $u_-(x)$ is a solution of
\[G(x,Du)+W(x,u_-(x))=0,\]
we have
\[\int_{TM}\Big(L(x,v)-W(x,u_-(x))\Big)\, d\tilde\mu_-=c\Big(G(x,p)+W(x,u_-(x))\Big)=0.\]
Therefore,
\begin{align*}
&\zeta \int_{TM}\partial_u W(x,u_-(x))\, d\tilde\mu_-
\\ &=\int_{TM}\Big(L(x,v)-W(x,u_-(x))+\zeta\cdot \partial_u W(x,u_-(x))\Big)\, d\tilde\mu_-
\\ &\geq \min_{\tilde \mu}\int_{TM}\Big(L(x,v)-W(x,u_-(x))+\zeta\cdot \partial_u W(x,u_-(x)\Big)\, d\tilde\mu>0.
\end{align*}
Recall that the set of all Mather measures associated with $H_-$, denoted by $\widetilde{\mathfrak M}_-$, is compact in the weak topology. Since $\partial_uW(\cdot,u_-(\cdot))\in C(M)$, the map $\tilde \mu\mapsto \int_{TM}\partial_u W(x,u_-(x))\, d\tilde \mu$ is continuous with respect to the weak topology. Hence, by compactness, the functional attains its minimum over $\widetilde{\mathfrak M}_-$, which yields a constant $A>0$ such that \eqref{A} holds.
\end{proof}

\begin{lem}\label{A31A32}
Assumption {\rm (A3)$_1$} is equivalent to Assumption {\rm (A3)$_2$}.
\end{lem}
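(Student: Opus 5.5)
Write $K(x,p):=H(x,p,u_-(x))-\zeta\,\partial_u W(x,u_-(x))=G(x,p)+f(x)$ with $f(x):=W(x,u_-(x))-\zeta\,\partial_uW(x,u_-(x))\in C(M)$. Then $K$ satisfies (A1), so the critical value theory recalled in Section \ref{Sec2} applies to it. Both directions reduce to standard facts about $c(K)$ and strict subsolutions. Since $\zeta$ appears in both (A3)$_1$ and (A3)$_2$, I will show the equivalence for a fixed $\zeta$.

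\emph{(A3)$_2\Rightarrow$(A3)$_1$.} Suppose $w$ is Lipschitz and $K(x,Dw(x))<0$ for a.e. $x$. I first want a uniform strict inequality. Set $g(x):=\operatorname{ess\,sup}$ of $K(\cdot,Dw)$ near $x$; since $Dw$ is bounded, $K(x,Dw(x))$ is bounded. The issue is that "$<0$ a.e." does not immediately give "$\le -\eta<0$ a.e." To handle this, I use the characterization of $c(K)$ as the infimum of levels admitting subsolutions, together with Proposition \ref{Mather}. For any closed measure $\tilde\mu$, I use convexity of $G$ and the Fenchel inequality $L(x,v)-f(x)\ge \langle Dw,v\rangle-K(x,Dw)$, after regularizing $w$. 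This requires mollification in charts, with convexity giving $K(x,Dw_\delta)\le (K(\cdot,Dw))*\rho_\delta+o(1)$ in the Lipschitz/a.e. framework. The mollified $w_\delta$ is $C^1$, and integrating against a closed measure kills the $\langle Dw_\delta,v\rangle$ term. This yields $\int (L-f)\,d\tilde\mu\ge -\int K(x,Dw)\,d\mu$ in the limit, which is $>0$ for each $\tilde\mu$. By compactness of minimizers (the minimum in \eqref{muc} is attained), $-c(K)=\min\int(L-f)\,d\tilde\mu>0$, so $c(K)<0$. A cleaner route is also available: a Lipschitz a.e. subsolution of a convex Hamiltonian is a viscosity subsolution, so $w$ is a viscosity subsolution of $K\le 0$, hence $c(K)\le 0$. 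Strictness then comes from the Aubry set. By Lemma \ref{lem:G=c} applied to $G+f$, if $c(K)=0$ then on $\mathcal A\neq\emptyset$ every $p\in\partial^-w(y)$ satisfies $K(y,p)=0$. This should contradict strict inequality via the fact that the Aubry set is where no strict subsolution exists, i.e. $K(x,Dw)<0$ a.e. near $\mathcal A$ forces strictness of the Clarke gradient inequality. I expect this strictness step, from a.e. $<0$ to $c(K)<0$, to be the main obstacle. I would try the Mather measure argument first, since it only needs integrals and the a.e. condition.

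\emph{(A3)$_1\Rightarrow$(A3)$_2$.} Let $c:=c(K)<0$. By the existence of critical subsolutions (\eqref{G=c} admits solutions), there is a viscosity solution $w$ of $K(x,Dw)=c$. By (A1), $w$ is Lipschitz, and a Lipschitz viscosity subsolution satisfies the inequality at every differentiability point. Hence $K(x,Dw)\le c<0$ a.e., which is (A3)$_2$ with the same $\zeta$.
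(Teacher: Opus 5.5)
Your proof of (A3)$_1\Rightarrow$(A3)$_2$ is correct and is the same as the paper's. The gap is the step you flag yourself: passing from $K(x,Dw(x))<0$ for Lebesgue-a.e.\ $x$ to $c(K)<0$. Neither of your two routes closes it.

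In the Mather-measure route, $\int K(x,Dw)\,d\mu$ has no meaning when the projected measure $\mu$ is singular, for example a Dirac mass. The mollification argument then only gives $\int(L-f)\,d\tilde\mu\ge 0$, i.e.\ $c(K)\le 0$.

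In the Aubry-set route you would need $K(y,p)<0$ for some $y\in\mathcal A$ and $p\in\partial^-w(y)$. Lebesgue-a.e.\ information says nothing at points of $\mathcal A$ when $\mathcal A$ is a null set. The paper's proof makes exactly the same move (``for almost every $x$ contained in the projected Aubry set''), and that step is vacuous in this situation.

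This is not a fixable technicality: with the a.e.\ formulation, the implication (A3)$_2\Rightarrow$(A3)$_1$ is false. Take $M=\mathbb T^n$, $G(x,p)=|p|^2$, $W(x,u)=a(x)u$ with $a\in C(M)$, $a(x_0)=0$ and $a>0$ on $M\setminus\{x_0\}$, and $u_-\equiv 0$. Then (A1)--(A2) hold, $u_-$ solves \eqref{se}, and $K(x,p)=|p|^2-\zeta a(x)$.
\begin{itemize}
\item The choice $w\equiv 0$ gives $K(x,Dw)=-\zeta a(x)<0$ for every $x\neq x_0$, so (A3)$_2$ holds.
\item However, $c(K)=\max_M(-\zeta a)=0$ for every $\zeta>0$, with Aubry set $\{x_0\}$, so (A3)$_1$ fails.
\item Even (A3)$_3$ fails: $c(\ep)=\ep\max_M a$ for $\ep\ge 0$ and $c(\ep)=0$ for $\ep\le 0$.
\end{itemize}

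What is missing is uniformity. Read (A3)$_2$ as $K(x,Dw)\le -\eta$ a.e.\ for some $\eta>0$. Then by convexity $w$ is a viscosity subsolution of $K=-\eta$, so $c(K)\le -\eta<0$ at once. Your forward argument already produces such a $w$, with $\eta=-c(K)$. The equivalence holds only after this strengthening of (A3)$_2$.
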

\begin{proof}
We first assume (A3)$_1$. There exists a viscosity solution $w(x)$ of
\begin{equation}\label{HWc}
H(x,Dw,u_-(x))-\zeta\cdot \partial_u W(x,u_-(x))=c\Big(H(x,p,u_-(x))-\zeta\cdot \partial_u W(x,u_-(x))\Big),
\end{equation}
where the right-hand side is negative by (A3)$_1$. The coercivity of $G(x,p)$ in $p$ implies that $w(x)$ is Lipschitz continuous. Moreover, since $G(x,p)$ is convex in $p$, $w(x)$ is a a.e.-subsolution of \eqref{HWc}, which implies (A3)$_2$.

\medskip

Now assume (A3)$_2$. We argue by contradiction. Suppose that
\[c\Big(H(x,p,u_-(x))-\zeta\cdot \partial_u W(x,u_-(x))\Big)\geq 0.\]
Then $w(x)$ given in (A3)$_2$ is a subsolution of \eqref{HWc}. By Lemma \ref{lem:G=c}, for almost every $x$ contained in the projected Aubry set associated with \eqref{HWc},
\[H(x,Dw,u_-(x))-\zeta\cdot \partial_u W(x,u_-(x))=c\Big(H(x,p,u_-(x))-\zeta\cdot \partial_u W(x,u_-(x))\Big)\geq 0,\]
which leads to a contradiction.
\end{proof}

\begin{lem}
Assume {\rm (A1)--(A2)}. Then $c(\ep)$ is Lipschitz continuous.
\end{lem}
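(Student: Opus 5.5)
The plan is to prove the Lipschitz bound $|c(\ep_1)-c(\ep_2)|\le \Lambda|\ep_1-\ep_2|$ directly, by showing that the critical value is monotone and translation-equivariant under sup-norm perturbations of the potential term. Write $H_\ep(x,p):=G(x,p)+W(x,u_-(x)+\ep)$. By (A2) and the mean value theorem, for every $x\in M$ and $\ep_1,\ep_2\in\R$,
\[
|W(x,u_-(x)+\ep_1)-W(x,u_-(x)+\ep_2)|\le \Lambda|\ep_1-\ep_2|.
\]
Hence $H_{\ep_1}(x,p)\le H_{\ep_2}(x,p)+\Lambda|\ep_1-\ep_2|$ for all $(x,p)$, and symmetrically with $\ep_1,\ep_2$ interchanged.

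The main step is the following elementary property of the critical value. If $G_1\le G_2+k$ pointwise for a constant $k$, and both Hamiltonians satisfy (A1), then $c(G_1)\le c(G_2)+k$. To see this, use the characterization
\[
c(G)=\inf\{c:\ G(x,Du)=c \text{ has a viscosity subsolution}\}.
\]
Take any $c>c(G_2)$ and a subsolution $w$ of $G_2(x,Dw)=c$. Since $G_1(x,p)\le G_2(x,p)+k$ holds at every test gradient $p$, the same $w$ is a subsolution of $G_1(x,Dw)=c+k$. Therefore $c(G_1)\le c+k$, and letting $c\downarrow c(G_2)$ gives the claim. Equivalently one could use \eqref{muc}: the Lagrangians satisfy $L_1\ge L_2-k$, so the minima over closed measures satisfy $-c(G_1)\ge -c(G_2)-k$. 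Either route is fine; I would use the subsolution one since it avoids Lagrangians.

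Note that $H_\ep$ fits the framework of Section \ref{Sec2}. It is $G$ plus a continuous function of $x$, since $u_-$ is continuous, so it satisfies (A1) and $c(\ep)$ is well defined and finite. Applying the comparison property with $k=\Lambda|\ep_1-\ep_2|$ in both directions gives
\[
|c(\ep_1)-c(\ep_2)|\le \Lambda|\ep_1-\ep_2|.
\]

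I do not expect a real obstacle. The only points needing care are that the critical value exists for each $\ep$ under (A1), and that the infimum characterization applies to continuous, not necessarily strictly convex, Hamiltonians. Both are recalled in Section \ref{Sec2}.
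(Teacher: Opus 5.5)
Your proof is correct, and it takes a more direct route than the paper's.

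The paper works with the discounted problems $\lambda u^i_\lambda+G(x,Du^i_\lambda)+W(x,u_-(x)+\ep_i)=0$. It shows that $u^1_\lambda-\Lambda|\ep_1-\ep_2|/\lambda$ is a subsolution of the second discounted equation, and the comparison principle then gives $|\lambda u^1_\lambda-\lambda u^2_\lambda|\le\Lambda|\ep_1-\ep_2|$ uniformly in $\lambda$. The conclusion follows from the Lions--Papanicolaou--Varadhan limit $\lambda u^i_\lambda\to -c(\ep_i)$.

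You make the same constant-shift comparison directly at the level of the critical equation, using the characterization of $c(G)$ as the infimum of levels that admit a viscosity subsolution. The pointwise inequality $H_{\ep_1}\le H_{\ep_2}+\Lambda|\ep_1-\ep_2|$, applied at test gradients, turns any subsolution for $H_{\ep_2}$ at level $c$ into a subsolution for $H_{\ep_1}$ at level $c+\Lambda|\ep_1-\ep_2|$.

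Your route needs neither the comparison principle for the discounted equations nor the ergodic approximation theorem; it uses only facts recalled in Section \ref{Sec2}. The paper's route gives the extra uniform-in-$\lambda$ bound on $\lambda u_\lambda$, which is not needed for this lemma. Both arguments give the Lipschitz constant $\Lambda$. Your alternative via \eqref{muc}, using $L_1\ge L_2-k$ for the associated Lagrangians, is also valid.
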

\begin{proof}
Let $\ep_1,\ep_2\in\R$ and $\lambda>0$. We consider the discounted equations
\[\lambda u^i_\lambda+G(x,Du^i_\lambda)+W(x,u_-(x)+\ep_i)=0,\quad i=1,2.\]
Define
\[w^1_\lambda(x):=u^1_\lambda(x)-\frac{\Lambda|\ep_1-\ep_2|}{\lambda},\]
then
\begin{align*}
&\lambda w^1_\lambda+G(x,Dw^1_\lambda)+W(x,u_-(x)+\ep_2)
\\ &=\lambda u^1_\lambda(x)-\Lambda|\ep_1-\ep_2|+G(x,Du^1_\lambda)+W(x,u_-(x)+\ep_2)
\\ &\leq \lambda u^1_\lambda(x)+G(x,Du^1_\lambda)+W(x,u_-(x)+\ep_1)=0.
\end{align*}
Hence, $w^1_\lambda$ is a subsolution of the equation satisfied by $u^2_\lambda$. By the comparison principle, we obtain
\[u^1_\lambda(x)-\frac{\Lambda|\ep_1-\ep_2|}{\lambda}\leq u^2_\lambda.\]
By symmetry, and multiplying by $\lambda$, we conclude that
\[|\lambda u^1_\lambda-\lambda u^2_\lambda|\leq \Lambda|\ep_1-\ep_2|.\]
According to \cite{LPV}, $\lambda u^i_\lambda \to -c(\ep_i)$ for $i=1,2$. Passing to the limit as $\lambda \to 0$, we deduce that $c(\ep)$ is Lipschitz continuous.
\end{proof}

The following result is analogous to \cite[Theorem 1.2]{TZ}, where the derivative of the effective Hamiltonian with respect to the variable $p$ is studied.
\begin{lem}\label{D-c}
Assume {\rm (A1)--(A2)}. Then
\[D^-c(\ep)|_{\ep=0}=\inf_{\tilde \mu_-\in\widetilde{\mathfrak M}_-}\int_{TM}\partial_u W(x,u_-(x))\, d\tilde\mu_-.\]
Similarly,
\[D^+c(\ep)|_{\ep=0}=\sup_{\tilde \mu_-\in\widetilde{\mathfrak M}_-}\int_{TM}\partial_u W(x,u_-(x))\, d\tilde\mu_-.\]
As a consequence, both {\rm (A3)$_1$} and {\rm (A3)$_2$} imply {\rm (A3)$_3$}.
\end{lem}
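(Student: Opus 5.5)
We prove the formula for $D^-c$; the one for $D^+c$ is symmetric. Write $G_\ep(x,p):=G(x,p)+W(x,u_-(x)+\ep)$, so $c(\ep)=c(G_\ep)$ and $c(0)=0$ because $u_-$ solves \eqref{se}. Let $L_\ep(x,v)=L(x,v)-W(x,u_-(x)+\ep)$ be its Lagrangian. By Proposition \ref{Mather},
\[-c(\ep)=\min_{\tilde\mu}\int_{TM}L_\ep\,d\tilde\mu ,\]
so $c(\ep)=\max_{\tilde\mu}\int_{TM}\big(W(x,u_-(x)+\ep)-L(x,v)\big)\,d\tilde\mu$, with the maximum over closed measures. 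Thus $c$ is a pointwise supremum of functions of $\ep$ that are $C^1$ in $\ep$, which have the Danskin structure. By the mean value theorem and (A2),
\[W(x,u_-(x)+\ep)=W(x,u_-(x))+\ep\,\partial_uW(x,u_-(x))+o(\ep),\]
with $o(\ep)$ uniform in $x$ because $\partial_uW$ is uniformly continuous on the compact set $M\times[\min u_- -1,\max u_-+1]$.

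\emph{Upper bound on the difference quotient for $\ep<0$.} Take any $\tilde\mu_-\in\widetilde{\mathfrak M}_-$. Testing it in the max gives
\[c(\ep)\ge \int\big(W(x,u_-+\ep)-L\big)\,d\tilde\mu_-=0+\ep\int\partial_uW(x,u_-)\,d\tilde\mu_-+o(\ep).\]
For $\ep<0$, dividing by $\ep$ reverses the inequality, so
\[\frac{c(\ep)-c(0)}{\ep}\le \int\partial_uW\,d\tilde\mu_-+o(1).\]
Hence $\limsup_{\ep\to0^-}\big(c(\ep)-c(0)\big)/\ep\le \inf_{\widetilde{\mathfrak M}_-}\int\partial_uW\,d\tilde\mu_-$.

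\emph{Lower bound (main step).} For each $\ep<0$, pick a Mather measure $\tilde\mu_\ep$ of $G_\ep$. Then
\[c(\ep)=\int\big(W(x,u_-+\ep)-L\big)\,d\tilde\mu_\ep\le 0+\ep\int\partial_uW(x,u_-)\,d\tilde\mu_\ep+o(\ep),\]
where we used $\int(W(x,u_-)-L)\,d\tilde\mu_\ep\le c(0)=0$. Dividing by $\ep<0$ gives
\[\frac{c(\ep)}{\ep}\ge\int\partial_uW\,d\tilde\mu_\ep+o(1).\]
It remains to show that limit points of $\tilde\mu_\ep$ as $\ep\to0^-$ lie in $\widetilde{\mathfrak M}_-$; then the liminf is at least the infimum, and the two bounds together give existence of $D^-c(0)$ and the formula.

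For this we need tightness. The $\int L\,d\tilde\mu_\ep$ are bounded, since $c(\ep)$ and $W$ are bounded near $u_-$, and $L$ is superlinear (after the modification of $G$ outside a large ball, uniformly in $\ep$ small). This gives uniform integrability of $|v|$, so a subsequence converges weakly to a closed measure $\tilde\mu$. Lower semicontinuity of $\tilde\mu\mapsto\int L\,d\tilde\mu$ (as $L$ is l.s.c. and bounded below), together with uniform convergence of $W(\cdot,u_-+\ep)$ and continuity of $c$, gives $\int L_0\,d\tilde\mu\le\liminf\int L_\ep\,d\tilde\mu_\ep=-c(0)$. So $\tilde\mu$ is a Mather measure for $H_-$. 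Since $\partial_uW(\cdot,u_-(\cdot))$ is continuous and depends only on $x$, its integral passes to the limit, because $M$ is compact and the $x$-marginals converge. This compactness step is where I expect the main care is needed, in particular handling possibly infinite $L$ and the closedness condition with the first-moment convergence.

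\emph{Consequence.} By Lemma \ref{>A}, (A3)$_1$ gives $\int\partial_uW\,d\tilde\mu_->A$ for all $\tilde\mu_-\in\widetilde{\mathfrak M}_-$, so the infimum is at least $A>0$, i.e.\ (A3)$_3$. By Lemma \ref{A31A32}, (A3)$_2$ implies (A3)$_3$ as well.
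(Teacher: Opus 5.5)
Your proposal is correct and takes essentially the same route as the paper. For the $\limsup$ bound you test the closed-measure characterization of $c(\ep)$ with a Mather measure of $H_-$, and for the $\liminf$ bound you use Mather measures $\tilde\mu_\ep$ of $G(x,p)+W(x,u_-(x)+\ep)$ together with compactness to show their limit points lie in $\widetilde{\mathfrak M}_-$. The only difference is in that compactness step: the paper invokes uniformly bounded supports and passes to the limit in $\int L\,d\tilde\mu_{\ep_n}$ directly, whereas you obtain minimality as an inequality via uniform integrability and lower semicontinuity, which if anything is the more careful justification.
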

\begin{proof}
We only show the formula for $D^-c(\ep)|_{\ep=0}$. The proof of the formula for $D^+c(\ep)|_{\ep=0}$ is similar. Let $\tilde\mu_-\in\widetilde{\mathfrak M}_-$. For all $(x,u)\in M\times[-\|u_-\|_\infty,\|u_-\|_\infty]$ and $\ep\in \R$, we have
\[W(x,u+\ep)=W(x,u)+\partial_u W(x,u)\ep+\int_0^1\Big(\partial_uW(x,u+\theta \ep)-\partial_uW(x,u)\Big)\ep\, d\theta.\]
Since $\partial_u W$ is continuous, it is uniformly continuous on $M\times[-\|u_-\|_\infty,\|u_-\|_\infty]$. Hence, there is a modulus of continuity $\omega$ depending only on $\|u_-\|_\infty$ such that
\[|\partial_uW(x,u+\theta \ep)-\partial_uW(x,u)|\leq \omega(|\ep|)\quad \text{for all }(x,u)\in M\times[-\|u_-\|_\infty,\|u_-\|_\infty].\]
For $\ep\in(-1,0)$, we obtain
\begin{align*}
-c(\ep)&\leq \int_{TM}\Big(L(x,v)-W(x,u_-(x)+\ep)\Big)\, d\tilde\mu_-
\\ &\leq \int_{TM}\Big(L(x,v)-W(x,u_-(x))-\partial_u W(x,u_-(x))\ep+\omega(|\ep|)|\ep|\Big)\, d\tilde\mu_-
\\ &=-c(0)-\ep \int_{TM}\partial_u W(x,u_-(x))\, d\tilde\mu_-+\omega(|\ep|)|\ep|,
\end{align*}
It follows that
\[c(\ep)-c(0)\geq \ep \int_{TM}\partial_u W(x,u_-(x))\, d\tilde\mu_--\omega(|\ep|)|\ep|\quad \text{for }\ep<0,\]
which implies
\begin{equation}\label{limsupc}
\limsup_{\ep\to 0^-}\frac{c(\ep)-c(0)}{\ep}\leq \inf_{\tilde \mu_-\in\widetilde{\mathfrak M}_-}\int_{TM}\partial_u W(x,u_-(x))\, d\tilde\mu_-.
\end{equation}

\medskip

We now prove the reverse inequality. Let $\tilde \mu_\ep$ be a closed measure such that
\[\int_{TM}\Big(L(x,v)-W(x,u_-(x)+\ep)\Big)\, d\tilde\mu_\ep=-c(\ep).\]
Since $\ep\in(-1,0)$, the subsolutions $w_\ep$ of
\[G(x,Dw_\ep)+W(x,u_-(x)+\ep)=c(\ep)\]
are equi-Lipschitz continuous with respect to $\ep$. Hence, $\{\tilde\mu_\ep\}_{\ep\in(-1,0)}$ is relatively compact in the weak topology. Let $\ep_n\to 0^-$ be such that
\[\frac{c(\ep_n)-c(0)}{\ep_n}=\liminf_{\ep\to 0^-}\frac{c(\ep)-c(0)}{\ep}.\]
Then, up to a subsequence, $\tilde \mu_{\ep_n}\to \tilde \mu_0$ in the weak topology. We claim that $\tilde\mu_0\in\widetilde{\mathfrak M}_-$.

\medskip

\noindent {\bf $\tilde\mu_0$ is closed}. For any $\phi\in C^1(M)$, since the supports of closed measures $\tilde\mu_{\ep_n}$ are contained in a common bounded set, the map $(x,v)\mapsto \langle D\phi(x),v\rangle_x$ is bounded and continuous on this set. Hence, by the weak convergence of measures,
\[\int_{TM} \langle D\phi(x),v\rangle_x\, d\tilde\mu_0=\lim_{n\to+\infty}\int_{TM} \langle D\phi(x),v\rangle_x\, d\tilde\mu_{\ep_n}=0.\]
\medskip

\noindent {\bf $\tilde\mu_0$ is minimizing}. Since $\ep\mapsto c(\ep)$ is continuous, 
\begin{align*}
&\int_{TM}\Big(L(x,v)-W(x,u_-(x))\Big)\, d\tilde\mu_0
\\ &=\lim_{n\to+\infty}\int_{TM}\Big(L(x,v)-W(x,u_-(x)+\ep_n)\Big)\, d\tilde\mu_{\ep_n}
=-c(\ep_n)\to -c(0)\quad \text{as }\ep_n\to 0^-,
\end{align*}
which implies that $\tilde \mu_0$ is minimizing.

\medskip
Using the expansion of $W$, we get
\begin{align*}
-c(\ep)&=\int_{TM}\Big(L(x,v)-W(x,u_-(x)+\ep)\Big)\, d\tilde\mu_\ep
\\ &\geq \int_{TM}\Big(L(x,v)-W(x,u_-(x))-\partial_u W(x,u_-(x))\ep-\omega(|\ep|)|\ep|\Big)\, d\tilde\mu_\ep
\\ &\geq -c(0)-\ep \int_{TM}\partial_u W(x,u_-(x))\, d \tilde\mu_\ep-\omega(|\ep|)|\ep|.
\end{align*}
Thus,
\begin{equation}\label{liminfc}
\liminf_{\ep\to 0^-}\frac{c(\ep)-c(0)}{\ep}\geq \int_{TM}\partial_u W(x,u_-(x))\, d \tilde\mu_0\geq  \inf_{\tilde \mu_-\in\widetilde{\mathfrak M}_-}\int_{TM}\partial_u W(x,u_-(x))\, d\tilde\mu_-.
\end{equation}
Combining \eqref{limsupc} and \eqref{liminfc}, we obtain the desired result.
\end{proof}

We now begin the proof of Theorem \ref{thm1}.

\begin{lem}\label{tc}
For all $c\geq 0$, there exists a time $t_c>0$ such that for any $u_-$-calibrated curve $\gamma:(-\infty,0]\to M$, we have
\[\frac{1}{t_c}\int_{-t_c}^0 \partial_u W(\gamma(s),u_-(\gamma(s)))\, ds>A-c.\]
\end{lem}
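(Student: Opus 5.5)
We argue by contradiction, using compactness of calibrated curves together with the lower bound \eqref{A} from Lemma \ref{>A}. Here $A$ is the constant of Lemma \ref{>A}, so that $\int\partial_uW(x,u_-(x))\,d\tilde\mu_->A$ for every $\tilde\mu_-\in\widetilde{\mathfrak M}_-$. By compactness of $\widetilde{\mathfrak M}_-$, in fact $\int\partial_uW\,d\tilde\mu_-\geq A'>A$ for some $A'$. For $c>0$ the claim will follow. For $c=0$ we use this strict margin: we may shrink $A$ slightly (replacing it by, say, $(A+A')/2$, which is harmless in Theorem \ref{thm1}), and then the case $c=0$ reduces to the case of a positive margin $A'-A$.

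Suppose the statement fails for some $c>0$. Then there are $t_n\to+\infty$ and $u_-$-calibrated curves $\gamma_n:(-\infty,0]\to M$ with
\[\frac1{t_n}\int_{-t_n}^0\partial_uW(\gamma_n(s),u_-(\gamma_n(s)))\,ds\le A-c.\]
We form the probability measures $\tilde\mu_n$ on $TM$ defined by
\[\int f\,d\tilde\mu_n=\frac1{t_n}\int_{-t_n}^0 f(\gamma_n,\dot\gamma_n)\,ds.\]

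The key steps are as follows. First, calibrated curves are minimizers of the action of $L(x,v)-W(x,u_-(x))$, where $u_-$ is a critical solution of $G(x,Du)+W(x,u_-(x))=0$, and they satisfy the equality in Definition \ref{defb}(2). Integrating this equality gives
\[\frac1{t_n}\int_{-t_n}^0\big(L(\gamma_n,\dot\gamma_n)-W(\gamma_n,u_-(\gamma_n))\big)\,ds=\frac{u_-(\gamma_n(0))-u_-(\gamma_n(-t_n))}{t_n}\to0=-c(H_-).\]
Second, since $L$ is superlinear and this action is bounded, the family $\tilde\mu_n$ is tight. Up to a subsequence, $\tilde\mu_n\rightharpoonup\tilde\mu$. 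For $\phi\in C^1$ we have
\[\int\langle D\phi,v\rangle\,d\tilde\mu_n=\frac{\phi(\gamma_n(0))-\phi(\gamma_n(-t_n))}{t_n}\to0,\]
so $\tilde\mu$ is closed.

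Third, lower semicontinuity of $\int L\,d\mu$ under weak convergence holds because $L$ is lsc and bounded below; this is the measure analogue of Lemma \ref{lem:ls}. It gives
\[\int(L-W(x,u_-))\,d\tilde\mu\le0=-c(H_-).\]
By \eqref{muc}, $\tilde\mu$ is therefore a Mather measure of $H_-$. Finally, $\partial_uW(x,u_-(x))$ is continuous and bounded and depends only on $x$, so we may pass to the limit in the assumed inequality:
\[\int\partial_uW(x,u_-(x))\,d\tilde\mu\le A-c<A.\]
This contradicts Lemma \ref{>A}.

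We expect the main obstacle to be justifying closedness of the limit measure. Tightness and convergence of the $\int\langle D\phi,v\rangle$ term require a uniform integrability of $|v|$, since $v\mapsto\langle D\phi,v\rangle$ is unbounded. We would handle this using the equi-Lipschitz bound on calibrated curves. As noted at the start of Section \ref{Sec2}, $G$ may be modified to be uniformly superlinear without changing $u_-$. Calibrated curves then have a uniform bound on $\frac1t\int_{-t}^0 L\,ds$, and together with superlinearity this gives uniform integrability of $|\dot\gamma_n|$. That is sufficient both for closedness of $\tilde\mu$ and for the lower semicontinuity step.
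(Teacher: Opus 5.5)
Your proof is correct and is essentially the paper's argument: contradiction, empirical measures along calibrated curves on $[-t_n,0]$, a weak limit that is closed and minimizing (hence a Mather measure of $H_-$), and a contradiction with \eqref{A}. The one technical difference is that the paper bounds $|\dot\gamma_n|$ uniformly (as in Lemma \ref{d<ep}), so all supports lie in one compact set, while you use uniform integrability of $|v|$ together with lower semicontinuity of $\mu\mapsto\int L\,d\mu$; separately, your special treatment of $c=0$ is not needed, because the limit inequality $\int\partial_uW(x,u_-(x))\,d\tilde\mu\le A$ already contradicts the strict inequality in \eqref{A}.
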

\begin{proof}
We argue by a contradiction. Assume that there exists $c_0\geq 0$, a sequence $t_n\to +\infty$ and a sequence $\gamma_n$ of $u_-$-calibrated curves such that
\begin{equation}\label{as1}
\frac{1}{t_n}\int_{-t_n}^0 \partial_u W(\gamma_n(s),u_-(\gamma_n(s)))\, ds\leq A-c_0.
\end{equation}
We define a sequence of probability measures $\tilde\mu_n$ on $TM$ as
\[\int_{TM}f(x,v)\, d\tilde\mu_n:=\frac{1}{t_n}\int_{-t_n}^0f(\gamma_n(s),\dot\gamma_n(s))\, ds\quad \text{for all }f\in C_{\rm b}(TM).\]
Since $G(x,p)$ is coercive in $p$, $u_-$ is Lipschitz continuous. By arguments similar to those in Lemma \ref{d<ep} below, the sequence $\dot\gamma_n$ is uniformly bounded. Hence, the supports of $\tilde\mu_n$ are contained in a common compact subset of $TM$. By Prokhorov's Theorem, there exists a subsequence (still denoted $\tilde\mu_n$) converging in the weak topology to some probability measure $\tilde\mu_*$. We now show that $\tilde\mu_*\in \widetilde{\mathfrak M}_-$.

\medskip

\noindent {\bf $\tilde\mu_*$ is closed.} Since $\dot\gamma_n$ is uniformly bounded, we know that ${\rm supp}(\tilde\mu^*)$ is bounded. Hence,
\[\int_{TM}|v|\, d\tilde\mu_*<+\infty.\]
For any $\phi\in C^1(M)$, since $\phi$ is bounded, we get
\begin{align*}
\int_{TM}\langle D\phi(x),v\rangle_x\, d\tilde\mu_n&=\frac{1}{t_n}\int_{-t_n}^0\langle D\phi(\gamma(s)),\dot\gamma_n(s))\rangle_{\gamma(s)}\, ds
\\ &=\frac{\phi(\gamma_n(0))-\phi(\gamma_n(-t_n))}{t_n}\to 0\quad \text{as }n\to+\infty.
\end{align*}

\medskip

\noindent {\bf $\tilde\mu_*$ is minimizing.} Since $\gamma_n$ are $u_-$-calibrated curves and $u_-$ is bounded, we have
\begin{align*}
&\int_{TM}\Big(L(x,v)-W(x,u_-(x))\Big)\, d\tilde\mu_n
\\ &=\frac{1}{t_n}\int_{-t_n}^0\Big(L(\gamma_n(s),\dot\gamma_n(s))-W(\gamma_n(s),u_-(\gamma_n(s)))\Big)\, ds
\\ &=\frac{u_-(\gamma_n(0))-u_-(\gamma_n(-t_n))}{t_n}\to 0\quad \text{as }n\to +\infty.
\end{align*}

\medskip

By \eqref{as1},
\[\int_{TM}\Big(L(x,v)-W(x,u_-(x))\Big)\, d\tilde\mu_*\leq A-c_0,\]
which contradicts \eqref{A}.
\end{proof}

\begin{lem}\label{bardelta}
For all $c>0$, there exists $\bar\delta>0$ such that for any $u_-$-calibrated curve $\gamma:[a,b]\to M$ with $-\infty\leq a<b\leq +\infty$, there holds
\[\bigg|\int_0^1\partial_u W(\gamma(s),u_-(\gamma(s))+\theta\xi(s))\, d\theta-\partial_uW(\gamma(s),u_-(\gamma(s)))\bigg|\leq c\quad \text{for all }s\in[a,b],\]
for any $\xi\in C([a,b])$ with $\|\xi\|_\infty\leq\bar\delta$.
\end{lem}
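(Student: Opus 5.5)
This is a uniform-continuity argument. Neither the curve $\gamma$ nor the calibration property plays a real role; all that matters is that the values $(\gamma(s),u_-(\gamma(s)))$ lie in the compact set $M\times[-\|u_-\|_\infty,\|u_-\|_\infty]$. So I will prove the stronger, curve-independent statement: for every $c>0$ there is $\bar\delta>0$ such that
\[\bigg|\int_0^1\partial_u W(x,u+\theta r)\, d\theta-\partial_uW(x,u)\bigg|\le c\]
for all $x\in M$, $|u|\le\|u_-\|_\infty$ and $|r|\le\bar\delta$. Applying this with $x=\gamma(s)$, $u=u_-(\gamma(s))$ and $r=\xi(s)$ gives the lemma for every $s\in[a,b]$, whether the interval is finite or infinite.

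\textbf{Step 1: set up a compact set.} Let
\[K:=M\times[-\|u_-\|_\infty-1,\ \|u_-\|_\infty+1].\]
Since $M$ is compact and $u_-\in C(M)$ is bounded, $K$ is compact. By (A2), $\partial_uW$ is continuous, so it is uniformly continuous on $K$. Let $\omega$ be its modulus of continuity in the $u$-variable, uniform in $x$, so that $|\partial_uW(x,u')-\partial_uW(x,u)|\le\omega(|u'-u|)$ for $(x,u),(x,u')\in K$ and $\omega(r)\to0$ as $r\to0^+$. This is the same modulus already used in the proof of Lemma \ref{D-c}.

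\textbf{Step 2: choose $\bar\delta$.} Pick $\bar\delta\in(0,1]$ with $\omega(\bar\delta)\le c$. For $\theta\in[0,1]$ and $|r|\le\bar\delta$, the points $(x,u)$ and $(x,u+\theta r)$ both lie in $K$, because $|u|\le\|u_-\|_\infty$ and $\bar\delta\le 1$. Their $u$-coordinates differ by $|\theta r|\le\bar\delta$, so
\[|\partial_uW(x,u+\theta r)-\partial_uW(x,u)|\le\omega(\bar\delta)\le c.\]

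\textbf{Step 3: integrate in $\theta$.} Since $\int_0^1 d\theta=1$, we can write
\[\int_0^1\partial_u W(x,u+\theta r)\, d\theta-\partial_uW(x,u)=\int_0^1\big(\partial_uW(x,u+\theta r)-\partial_uW(x,u)\big)\, d\theta.\]
Bounding the integrand by Step 2 gives the desired estimate $\le c$. This integral is measurable in $s$ and finite, because $\partial_uW$ is continuous and bounded by $\Lambda$.

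There is no real obstacle here. The only point requiring care is that $\bar\delta$ must not depend on the curve, the interval $[a,b]$, or the function $\xi$. This is guaranteed because the compact set $K$ is determined by $\|u_-\|_\infty$ alone.
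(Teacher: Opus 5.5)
Your proposal is correct and is essentially the paper's argument: uniform continuity of $\partial_u W$ on $M\times[-\|u_-\|_\infty-1,\|u_-\|_\infty+1]$, with $\bar\delta<1$ chosen so that $\omega(\bar\delta)\le c$. The only difference is that you bound the $\theta$-integral of the difference directly, whereas the paper first uses the mean value theorem for integrals to pick a single $\theta_s$. Your version is slightly cleaner and makes explicit that the calibration property plays no role.
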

\begin{proof}
Let $\bar\delta\in (0,1)$ be a constant to be determined later. For any $u_-$-calibrated curve $\gamma:[a,b]\to M$, there is $\theta_s\in (0,1)$ such that
\begin{align*}
&\bigg|\int_0^1\partial_u W(\gamma(s),u_-(\gamma(s))+\theta\xi(s))\, d\theta-\partial_uW(\gamma(s),u_-(\gamma(s)))\bigg|
\\ &=\big|\partial_u W(\gamma(s),u_-(\gamma(s))+\theta_s\xi(s))-\partial_uW(\gamma(s),u_-(\gamma(s)))\big|.
\end{align*}
Since $\|\xi\|_\infty\leq \bar\delta<1$, we have
\[|u_-(\gamma(s))+\theta_s\xi(s)|\leq\|u_-\|_\infty+\|\xi\|_\infty\leq \|u_-\|_\infty+1.\]
Since $\partial_uW$ is continuous, there is a modulus of continuity $\omega$ depending on $u_-$ such that
\[\big|\partial_u W(x,u_1)-\partial_uW(x,u_2)\big|\leq \omega(|u_1-u_2|)\]
for all $x\in M$ and $u_1,u_2\in\R$ with $|u_1|,|u_2|\leq \|u_-\|_\infty+1$. Hence,
\[\big|\partial_u W(\gamma(s),u_-(\gamma(s))+\theta_s\xi(s))-\partial_uW(\gamma(s),u_-(\gamma(s)))\big|\leq \omega(|\theta_s\xi(s)|)\leq \omega(\bar\delta).\]
Finally, we choose $\bar\delta\in (0,1)$ sufficiently small such that $\omega(\bar\delta)\leq c$. This yields the desired estimate.
\end{proof}

Define
\[u_\delta:=u_--\delta\quad \text{for } \delta\geq 0.\]
The following result is the key step in the proof of Theorem \ref{thm1}. In contrast to \cite[Lemma 3.3]{RWY}, we do not have the contact Hamiltonian flow in the present setting. Consequently, no information on the velocities $\dot\gamma$ and $\dot\gamma_\delta$ is available. To overcome this difficulty, we restrict ourselves to the case $H(x,p,u)=G(x,p)+W(x,u)$. In this setting, it suffices to estimate the distance between $\gamma$ and $\gamma_\delta$. The proof relies on the lower semicontinuity property established in Lemma \ref{lem:ls}.
\begin{lem}\label{d<ep}
Fix $t_0>0$. For any $\ep>0$, there exists $\delta_0>0$ such that for any $x\in M$ and any $\delta\in[0,\delta_0]$, there exists a minimizer $\gamma_\delta:[-t_0,0]\to M$ of $T^-_{t_0}u_\delta(x)$ with $\gamma_\delta(0)=x$ (up to reparametrization) and a $u_-$-calibrated curve $\gamma:[-t_0,0]\to M$ satisfying
\[d\Big(\gamma_\delta(s),\gamma(s)\Big)<\ep\quad \text{for all }s\in[-t_0,0].\]
\end{lem}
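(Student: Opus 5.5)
We argue by contradiction and compactness. Suppose the statement fails for some $\ep>0$. Then there are $\delta_n\to 0^+$, points $x_n\in M$, and, for each $n$, every minimizer $\gamma_n:[-t_0,0]\to M$ of $T^-_{t_0}u_{\delta_n}(x_n)$ with $\gamma_n(0)=x_n$ stays at distance $\ge\ep$ at some time from every $u_-$-calibrated curve on $[-t_0,0]$. Minimizers exist by the standard direct method, using Lemma \ref{lem:ls} together with the superlinearity of $L$ after modifying $G$ outside a large ball. Fix one such minimizer $\gamma_n$ for each $n$. Passing to a subsequence, we may assume $x_n\to x$. We will show that a further subsequence of $\gamma_n$ converges uniformly to a curve $\gamma$ which is $u_-$-calibrated on $[-t_0,0]$; this contradicts the choice of $\gamma_n$. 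If the notion of calibration requires a backward extension to $(-\infty,0]$, we concatenate $\gamma$ with a calibrated curve issuing from $\gamma(-t_0)$, which exists by Definition \ref{defb}(2), and use the domination $u_-\prec L_H$.

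\textbf{A priori bounds.} Since $u_{\delta_n}=u_--\delta_n$ lies between $u_--1$ and $u_-$, Proposition \ref{Tprop}(2) gives
\[
\|T^-_su_{\delta_n}-u_-\|_\infty\le e^{\Lambda t_0}\delta_n\quad\text{for } s\in[0,t_0],
\]
so all arguments $u$ entering $W$ stay in a fixed compact set. Because $H=G+W$, we have $L_H(x,v,u)=L(x,v)-W(x,u)$. The action of $\gamma_n$ equals $T^-_{t_0}u_{\delta_n}(x_n)-u_{\delta_n}(\gamma_n(-t_0))$, which is bounded. Hence $\int_{-t_0}^0L(\gamma_n,\dot\gamma_n)\,ds$ is bounded, and superlinearity of $L$ makes $\{\dot\gamma_n\}$ equi-integrable. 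By Dunford--Pettis and Arzel\`a--Ascoli, a subsequence converges uniformly to an absolutely continuous $\gamma$, with $\dot\gamma_n\rightharpoonup\dot\gamma$ weakly in $L^1$.

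\textbf{Identifying the limit as calibrated.} This is the main step. By Lemma \ref{lem:ls} applied to $L$, and by the uniform convergence of the $W$ term (here $T^-_su_{\delta_n}(\gamma_n(s))\to u_-(\gamma(s))$ uniformly, using the bound above and continuity of $u_-$),
\[
\int_{-t_0}^0\bigl(L(\gamma,\dot\gamma)-W(\gamma,u_-(\gamma))\bigr)\,ds\le\liminf_n\bigl[T^-_{t_0}u_{\delta_n}(x_n)-u_{\delta_n}(\gamma_n(-t_0))\bigr]=u_-(x)-u_-(\gamma(-t_0)).
\]
The last equality uses $T^-_{t_0}u_{\delta_n}\to T^-_{t_0}u_-=u_-$ uniformly. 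Combining this with $u_-\prec L_H$ gives equality. Since domination holds on every subinterval, equality on the whole interval forces equality on each subinterval $[s_1,s_2]\subset[-t_0,0]$, so $\gamma$ is $u_-$-calibrated on $[-t_0,0]$.

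Finally, $\gamma_n\to\gamma$ uniformly, so $d(\gamma_n(s),\gamma(s))<\ep$ for all $s$ once $n$ is large. This is the desired contradiction. Note that the $\gamma$ produced ends at $x$, not at $x_n$; this is harmless because the statement only requires closeness in $s$. The delicate points are the lower semicontinuity in the presence of the possibly infinite-valued $L$, which Lemma \ref{lem:ls} handles, and the uniform control of the $u$-argument inside $W$, which Proposition \ref{Tprop}(2) provides. The phrase ``up to reparametrization'' in the statement plays no role beyond allowing the choice of a minimizer.
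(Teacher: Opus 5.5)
Your proof is correct and is essentially the paper's: argue by contradiction, extract a uniformly convergent subsequence of minimizers, then use Lemma \ref{lem:ls} together with $\|T^-_s u_{\delta_n}-u_-\|_\infty\le e^{\Lambda t_0}\delta_n$ and the domination $u_-\prec L_H$ to show the limit curve is $u_-$-calibrated. The one difference is the compactness step. The paper gets equi-Lipschitz minimizers from the $W^{1,\infty}$ bound of \cite[Lemma 2.3]{MN} plus superlinearity of $L$, whereas you use only the bounded total action to get equi-integrable velocities and Dunford--Pettis; this is more self-contained and suffices here, though the paper's uniform velocity bound is what it later reuses in Lemma \ref{tc}.
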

\begin{proof}
We argue by a contradiction. Suppose that there exist a constant $\ep_0>0$, a sequence $\{\delta_n\}\subset (0,1)$ with $\delta_n\to 0$, and a sequence $\{x_n\}\subset M$ such that for every minimizer $\gamma_n:[-t_0,0]\to M$ of $T^-_{t_0}u_{\delta_n}(x_n)$ with $\gamma_n(0)=x_n$ and every $u_-$-calibrated curve $\gamma:[-t_0,0]\to M$, there exists $s_n\in[-t_0,0]$ such that
\[d\Big(\gamma_{\delta_n}(s_n),\gamma(s_n)\Big)\geq \ep_0.\]

We first show that the family $\{\gamma_n\}$ is equi-Lipschitz continuous. Since $\delta_n\in (0,1)$ and $u_{\delta_n}=u_--\delta_n$, we have
\[\|u_{\delta_n}\|_\infty\leq \|u_-\|_\infty+1,\quad Du_{\delta_n}=Du_-.\]
By \cite[Lemma 2.3]{MN}, there is a constant $\kappa>0$, depending on $\|u_-\|_{W^{1,\infty}}$ and $t_0$, such that
\[\|T^-_t u_\delta(x)\|_{W^{1,\infty}}\leq\kappa\quad \text{on }M\times[0,t_0].\]
Let $t_1>t_2$ with $t_1,t_2\in(0,t_0)$. Using the superlinearity of $L(x,v)$, we obtain
\begin{align*}
&\kappa d(\gamma_n(-t_1),\gamma_n(-t_2))+\kappa(t_1-t_2)\geq T^-_{t_0-t_2}u_{\delta_n}(\gamma_n(-t_2))-T^-_{t_0-t_1}u_{\delta_n}(\gamma_n(-t_1))
\\ &=\int_{-t_1}^{-t_2}\Big(L(\gamma_n(s),\dot\gamma_n(s))-W(\gamma_n(s),T^-_{t_0+s}u_{\delta_n}(\gamma_n(s)))\Big)\, ds
\\ &\geq \int_{-t_1}^{-t_2}\Big((\kappa+1)|\dot\gamma_n(s)|+C_{\kappa+1}-\max_{x\in M,\ |u|\leq \kappa}W(x,u)\Big)\, ds
\\ &\geq (\kappa+1)d(\gamma_n(-t_1),\gamma_n(-t_2))+\Big(C_{\kappa+1}-\max_{x\in M,\ |u|\leq \kappa}W(x,u)\Big)(t_1-t_2),
\end{align*}
for some constant $C_{\kappa+1}\in\R$. It follows that
\[d(\gamma_n(-t_1),\gamma_n(-t_2))\leq \Big(\kappa+\max_{x\in M,\ |u|\leq \kappa}W(x,u)-C_{\kappa+1}\Big)(t_1-t_2),\]
which implies that $\{\dot\gamma_n\}$ is bounded. By the Ascoli-Arzelà theorem, there is a subsequence of $\{\gamma_n\}$ (still denoted $\{\gamma_n\}$) uniformly converges to a Lipschitz continuous curve $\gamma_*:[-t_0,0]\to M$. By Proposition \ref{Tprop} (2), we also have
\[T^-_t u_\delta \to u_-\quad \text{uniformly on }M\times[0,t_0]\quad \text{as }\delta\to 0.\]
By Lemma \ref{lem:ls}, for $t_1>t_2$ with $t_1,t_2\in(0,t_0)$, we obtain
\begin{align*}
&u_-(\gamma_*(-t_2))-u_-(\gamma_*(-t_1))=\lim_{n\to+\infty}\Big(T^-_{t_0-t_2}u_{\delta_n}(\gamma_n(-t_2))-T^-_{t_0-t_1}u_{\delta_n}(\gamma_n(-t_1))\Big)
\\ &=\lim_{n\to+\infty}\int_{-t_1}^{-t_2}\Big(L(\gamma_n(s),\dot\gamma_n(s))-W(\gamma_n(s),T^-_{t_0+s}u_{\delta_n}(\gamma_n(s)))\Big)\, ds
\\ &\geq \int_{-t_1}^{-t_2}\Big(L(\gamma_*(s),\dot\gamma_*(s))-W(\gamma_*(s),u_-(\gamma_*(s)))\Big)\, ds,
\end{align*}
which implies that $\gamma_*$ is a $u_-$-calibrated curve. The uniform convergence $\gamma_n\to\gamma_*$ contradicts the assumption, completing the proof.
\end{proof}

\begin{cor}\label{deltac}
Fix $t_0>0$. For any $c>0$, there exists a constant $\delta_c>0$ such that for any $x\in M$ and any $\delta\in [0,\delta_c]$, there exist a minimizer $\gamma_\delta:[-t_0,0]\to M$ of $T^-_{t_0}u_\delta(x)$ with $\gamma_\delta(0)=x$, and a $u_-$-calibrated curve $\gamma:[-t_0,0]\to M$, satisfying
\[\bigg|\int_0^1\partial_u W(\gamma_\delta(s),u_-(\gamma_\delta(s))-\theta w_\delta(s+t_0))\, d\theta-\partial_uW(\gamma(s),u_-(\gamma(s)))\bigg|\leq c\quad \text{for all }s\in[-t_0,0],\]
where
\[w_\delta(\tau):=u_-(\gamma_\delta(\tau-t_0))-T^-_\tau u_\delta(\gamma_\delta(\tau-t_0))\quad \text{for }\tau\in[0,t_0].\]
\end{cor}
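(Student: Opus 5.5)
The plan is to split the difference into a "same curve, perturbed argument" term and a "different curves" term, and control each by a uniform-continuity modulus of $\partial_u W$ on a fixed compact set:
\[
\bigg|\int_0^1\partial_u W(\gamma_\delta(s),u_-(\gamma_\delta(s))-\theta w_\delta(s+t_0))\,d\theta-\partial_uW(\gamma_\delta(s),u_-(\gamma_\delta(s)))\bigg|
+\Big|\partial_uW(\gamma_\delta(s),u_-(\gamma_\delta(s)))-\partial_uW(\gamma(s),u_-(\gamma(s)))\Big|.
\]
We take $\gamma_\delta$ and $\gamma$ from Lemma \ref{d<ep}, with $\ep>0$ to be chosen in terms of $c$.

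\textbf{Setting up the modulus.} For $\delta\in[0,1]$, Proposition \ref{Tprop} (2) together with $T^-_\tau u_-=u_-$ gives
\[
\|T^-_\tau u_\delta-u_-\|_\infty\le e^{\Lambda\tau}\delta\le e^{\Lambda t_0}\delta\quad\text{for }\tau\in[0,t_0].
\]
Hence $|w_\delta(\tau)|\le e^{\Lambda t_0}\delta$. Restricting further to $\delta\le e^{-\Lambda t_0}$, every argument $u_-(\gamma_\delta(s))-\theta w_\delta(s+t_0)$ lies in $[-\|u_-\|_\infty-1,\|u_-\|_\infty+1]$. On $M\times[-\|u_-\|_\infty-1,\|u_-\|_\infty+1]$, the function $\partial_uW$ is uniformly continuous; let $\omega$ denote its modulus.

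\textbf{First term.} Arguing exactly as in Lemma \ref{bardelta}, which only uses that the curve lies in $M$ and that $\|\xi\|_\infty$ is small, with $\xi(s)=-w_\delta(s+t_0)$, the first term is at most $\omega(e^{\Lambda t_0}\delta)$. This is below $c/2$ once $\delta$ is small.

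\textbf{Second term.} Since (A1) makes $u_-$ Lipschitz with some constant $\ell$, the map $x\mapsto\partial_uW(x,u_-(x))$ is uniformly continuous on the compact manifold $M$. Choose $\ep>0$ so that $d(x,y)<\ep$ forces
\[
|\partial_uW(x,u_-(x))-\partial_uW(y,u_-(y))|\le c/2.
\]
Lemma \ref{d<ep} with this $\ep$ gives $\delta_0$, a minimizer $\gamma_\delta$ and a calibrated curve $\gamma$ with $d(\gamma_\delta(s),\gamma(s))<\ep$ on $[-t_0,0]$. This bounds the second term by $c/2$.

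Setting $\delta_c=\min\{\delta_0,e^{-\Lambda t_0},\delta'\}$, where $\delta'$ is the threshold from the first term, finishes the proof. The only real content is the closeness of curves, which is already supplied by Lemma \ref{d<ep}. The one point to check carefully is that the Lemma \ref{bardelta} argument applies along $\gamma_\delta$ rather than along a calibrated curve, which it does because the estimate is pointwise in $x$.
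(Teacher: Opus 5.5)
Your proof is correct and is essentially the paper's argument. Both take $\gamma_\delta,\gamma$ from Lemma \ref{d<ep}, bound $|w_\delta|\le e^{\Lambda t_0}\delta\le 1$ via Proposition \ref{Tprop} (2), and conclude by uniform continuity of $\partial_uW$ on $M\times[-\|u_-\|_\infty-1,\|u_-\|_\infty+1]$. The only difference is cosmetic: the paper uses a single joint modulus evaluated at $(\|Du_-\|_\infty+1)\,d(\gamma_\delta(s),\gamma(s))+e^{\Lambda t_0}\delta_c$, whereas you split into two terms by the triangle inequality, which incidentally shows that mere continuity of $u_-$ suffices for the second term.
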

\begin{proof}
Let $\delta_c\in (0,e^{-\Lambda t_0})$ be a constant to be determined later. By Proposition \ref{Tprop} (2), 
\[|w_\delta(\tau)|\leq \|u_--T^-_\tau u_\delta\|_\infty\leq e^{\Lambda \tau}\delta\leq e^{\Lambda t_0}\delta_c\leq 1\quad \text{for }\tau\in[0,t_0].\]
Hence,
\[|u_-(\gamma_\delta(s))-\theta w_\delta(s+t_0)|\leq\|u_-\|_\infty+|w_\delta(s+t_0)|\leq \|u_-\|_\infty+1\quad \text{for all }s\in[-t_0,0].\]
Therefore, by the continuity of $\partial_u W$ and the above uniform bound, there is a modulus of continuity $\omega$ depending on $u_-$ and $\theta_s\in (0,1)$ such that
\begin{align*}
&\bigg|\int_0^1\partial_u W(\gamma_\delta(s),u_-(\gamma_\delta(s))-\theta w_\delta(s+t_0))\, d\theta-\partial_uW(\gamma(s),u_-(\gamma(s)))\bigg|
\\ &=\big|\partial_u W(\gamma_\delta(s),u_-(\gamma_\delta(s))-\theta_s w_\delta(s+t_0))-\partial_uW(\gamma(s),u_-(\gamma(s)))\big|
\\ &\leq \omega\Big((\|Du_-\|_\infty+1)d(\gamma_\delta(s),\gamma(s))+e^{\Lambda t_0}\delta_c\Big).
\end{align*}
Finally, choosing $\delta_c>0$ sufficiently small, we conclude from Lemma \ref{d<ep} that
\[(\|Du_-\|_\infty+1)d(\gamma_\delta(s),\gamma(s))+e^{\Lambda t_0}\delta_c\]
can be made sufficiently small so that
\[\omega\Big((\|Du_-\|_\infty+1)d(\gamma_\delta(s),\gamma(s))+e^{\Lambda t_0}\delta_c\Big)\leq c.\]
This yields the desired estimate.
\end{proof}

\noindent {\bf Proof of Theorem \ref{thm1}.} Let $A>0$ be the constant given in Lemma \ref{>A}. We take $c\in (0,\frac{A}{2})$ and let $t_c>0$ be the time given in Lemma \ref{tc}.

\medskip

\noindent {\bf Step 1.} Define
\[\delta^c:=e^{-\Lambda t_c}\bar\delta,\quad u^\delta:=u_-+\delta,\]
where $\delta\in[0,\delta^c]$ and $\bar\delta>0$ is the constant given in Lemma \ref{bardelta}. We aim to estimate $\|T^-_{t_c}u^\delta-u_-\|_\infty$. For any $x\in M$, let $\gamma:[-t_0,0]\to M$ be a $u_-$-calibrated curve with $\gamma(0)=x$. Define
\[v(s):=u_-(\gamma(s-t_c)),\quad v^\delta(s):=T^-_s u^\delta(\gamma(s-t_c)),\quad s\in [0,t_c].\]
For any $s,\Delta s$ such that $s+\Delta s\in [0,t_c]$, we have
\[v(s+\Delta s)=v(s)+\int_s^{s+\Delta s}\Big(L(\gamma(\tau-t_c),\dot\gamma(\tau-t_c))-W(\gamma(\tau-t_c),v(\tau))\Big)\, d\tau,\]
and
\[v^\delta(s+\Delta s)\leq v^\delta(s)+\int_s^{s+\Delta s}\Big(L(\gamma(\tau-t_c),\dot\gamma(\tau-t_c))-W(\gamma(\tau-t_c),v^\delta(\tau))\Big)\, d\tau.\]
Set
\[\bar v(s):=v^\delta(s)-v(s)\quad \text{for }s\in [0,t_c].\]
By Proposition \ref{Tprop} (1), $\bar v(s)\geq 0$ for all $s\in [0,t_c]$ and for all $\delta\geq 0$. Hence,
\begin{align*}
\bar v(s+\Delta s)&\leq \bar v(s)+\int_s^{s+\Delta s}\Big(W(\gamma(\tau-t_c),v(\tau))-W(\gamma(\tau-t_c),v^\delta(\tau))\Big)\, d\tau
\\ &=\bar v(s)-\int_s^{s+\Delta s}\bar v(\tau)\int_0^1\partial_u W(\gamma(\tau-t_c),v(\tau)+\theta\bar v(\tau))\, d\theta\, d\tau.
\end{align*}
Note that both $u_-$ and $\gamma$ are Lipschitz continuous. Since $Du^\delta=Du^-$, $T^-_tu^\delta(x)$ is also Lipschitz continuous on $M\times[0,t_c]$ by Proposition \ref{ppT-}. Thus, $\bar v(s)$ is Lipschitz continuous and for a.e. $s\in [0,t_c]$, 
\[\dot{\bar v}(s)\leq -\bar v(s)\int_0^1\partial_u W(\gamma(s-t_c),u_-(\gamma(s-t_c))+\theta\bar v(s))\, d\theta.\]
By Proposition \ref{Tprop} (2),
\[|\bar v(s)|\leq \|T^-_s u^\delta-u_-\|_\infty\leq e^{\Lambda s}\delta\leq e^{\Lambda t_c}\delta^c=\bar \delta.\]
By Lemma \ref{bardelta} and $\bar v(s)\geq 0$, we have
\[\dot{\bar v}(s)\leq \bar v(s)\Big(-\partial_u W(\gamma(s-t_c),u_-(s-t_c))+c\Big).\]
Consider the comparison ODE
\begin{equation*}
  \left\{
   \begin{aligned}
   &\dot v_0(s)=v_0(s)\Big(-\partial_u W(\gamma(s-t_c),u_-(s-t_c))+c\Big),
   \\
   &v_0(0)=\delta.
   \end{aligned}
   \right.
\end{equation*}
The solution is
\[v_0(t)=\delta e^{\int_0^t(-\partial_u W(\gamma(s-t_c),u_-(s-t_c))+c)\, ds}\]
By the comparison principle, we obtain
\[|T^-_{t_c}u^\delta(x)-u_-(x)|=\bar v(t_c)\leq \delta e^{\int_0^t(-\partial_u W(\gamma(s-t_c),u_-(s-t_c))+c)\, ds}<\delta e^{(-A+2c)t_c}.\]

\medskip

\noindent {\bf Step 2.} Define
\[u_\delta:=u_--\delta,\quad \delta\in[0,\delta_c],\]
where $\delta_c>0$ is the constant given by Corollary \ref{deltac}. We estimate $\|T^-_{t_c}u_\delta-u_-\|_\infty$. For any $x\in M$, let $\gamma_\delta:[-t_0,0]\to M$ be a minimizer of $T^-_{t_c}u_\delta(x)$ with $\gamma_\delta(0)=x$. Define
\[w(s):=u_-(\gamma_\delta(s-t_c)),\quad w_\delta(s):=T^-_s u_\delta(\gamma_\delta(s-t_c)),\quad s\in [0,t_c].\]
For any $s,\Delta s$ such that $s+\Delta s\in [0,t_c]$, we have
\[w(s+\Delta s)\leq w(s)+\int_s^{s+\Delta s}\Big(L(\gamma_\delta(\tau-t_c),\dot\gamma_\delta(\tau-t_c))-W(\gamma_\delta(\tau-t_c),w(\tau))\Big)\, d\tau,\]
and
\[w_\delta(s+\Delta s)=w_\delta(s)+\int_s^{s+\Delta s}\Big(L(\gamma_\delta(\tau-t_c),\dot\gamma_\delta(\tau-t_c))-W(\gamma_\delta(\tau-t_c),w_\delta(\tau))\Big)\, d\tau.\]
Define
\[\bar w(s):=w(s)-w_\delta(s),\quad s\in [0,t_c].\]
By Proposition \ref{Tprop} (1), $\bar w(s)\geq 0$ for all $s\in [0,t_c]$ and for all $\delta\geq 0$. We then obtain
\begin{align*}
\bar w(s+\Delta s)&\leq \bar w(s)+\int_s^{s+\Delta s}\Big(W(\gamma_\delta(\tau-t_c),w_\delta(\tau))-W(\gamma_\delta(\tau-t_c),w(\tau))\Big)\, d\tau
\\ &=\bar w(s)-\int_s^{s+\Delta s}\bar w(\tau)\int_0^1\partial_u W(\gamma_\delta(\tau-t_c),w(\tau)-\theta\bar w(\tau))\, d\theta\, d\tau.
\end{align*}
Since $Du_\delta=Du^-$, $T^-_tu_\delta(x)$ is Lipschitz continuous on $M\times[0,t_c]$. Thus, $\bar w(s)$ is Lipschitz continuous and for a.e. $s\in [0,t_c]$, 
\[\dot{\bar w}(s)\leq -\bar w(s)\int_0^1\partial_u W(\gamma_\delta(s-t_c),u_-(\gamma_\delta(s-t_c))+\theta\bar w(s))\, d\theta.\]
Since $\delta\in[0,\delta^c]$ and $\bar w(s)\geq 0$, by Corollary \ref{deltac}, there exists a $u_-$-calibrated curve $\gamma:[-t_c,0]\to M$ such that
\[\dot{\bar w}(s)\leq \bar w(s)\Big(-\partial_uW(\gamma(s-t_c),u_-(\gamma(s-t_c)))+c\Big),\quad s\in[0,t_c].\]
Using the comparison principle for ODEs, we conclude
\[|T^-_{t_c}u_\delta(x)-u_-(x)|\leq \delta e^{(-A+2c)t_c}.\]

\medskip

\noindent {\bf Step 3.} Let
\[\Delta_c:=\min\{\delta^c,\delta_c\}.\]
We take $\varphi\in C(M)$ satisfying $\|\varphi-u_-\|_\infty=:\delta\leq \Delta_c$. Then
\[u_{\delta_c}\leq u_\delta\leq \varphi\leq u^\delta\leq u^{\delta^c}.\]
By Steps 1 and 2, and Proposition \ref{Tprop} (1), we have
\[u_{\delta e^{(-A+2c)t_c}}=u_--\delta e^{(-A+2c)t_c}\leq T^-_{t_c}u_\delta\leq T^-_{t_c}\varphi\leq T^-_{t_c}u^\delta\leq u_-+\delta e^{(-A+2c)t_c}=u^{\delta e^{(-A+2c)t_c}}.\]
Since $\delta e^{(-A+2c)t_c}<\delta\leq\Delta_c$, we still have
\[u_--\delta e^{2(-A+2c)t_c}\leq T^-_{t_c}u_{\delta e^{(-A+2c)t_c}}\leq T^-_{2t_c}\varphi\leq T^-_{t_c}u^{\delta e^{(-A+2c)t_c}}\leq u_-+\delta e^{2(-A+2c)t_c}.\]
We iterate $n$ times to get
\[\|T^-_{nt_c}\varphi-u_-\|_\infty\leq \delta e^{n(-A+2c)t_c}\quad \text{for all }n\in\N.\]
For any $t>0$ large, we can decompose
\[t=nt_c+r,\quad n\in\N,\ r\in[0,t_c).\]
It follows that
\begin{equation}\label{t<e}
\begin{aligned}
\|T^-_t\varphi-u_-\|_\infty&\leq e^{\Lambda r}\|T^-_{nt_c}\varphi-u_-\|_\infty
\\ &\leq \delta e^{\Lambda r+n(-A+2c)t_c}=\delta e^{(\Lambda +A-2c)r+(-A+2c)t}
\\ &<\delta e^{(\Lambda +A-2c)t_c+(-A+2c)t}=\|\varphi-u_-\|_\infty C_c e^{(-A+2c)t},
\end{aligned}
\end{equation}
where
\[C_c:=e^{(\Lambda +A-2c)t_c}.\]
Here we note that the constraint $\|\varphi-u_-\|_\infty \leq \Delta_c$ introduced above depends on $c$. It remains to remove this dependence.

Now we take $c=\frac{A}{4}$ and define $\Delta:=\Delta_{\frac{A}{4}}$. Then for any $\varphi\in C(M)$ satisfying $\|\varphi-u_-\|_\infty\leq \Delta$, we have
\begin{equation}\label{A/4}
\|T^-_t\varphi-u_-\|_\infty\leq \|\varphi-u_-\|_\infty C_{\frac{A}{4}} e^{-\frac{A}{2}t},
\end{equation}
for all $t>0$ large. This means that $u_-$ is locally asymptotically stable.

Finally, for any $\varphi\in C(M)$ satisfying $\|\varphi-u_-\|_\infty\leq \Delta$, we take
\[s_c:=\max\Big\{1,-\frac{2}{A}(\ln \Delta_c-\ln(C_{\frac{A}{4}}\Delta ))\Big\}.\]
Then it is clear that by \eqref{A/4}, $\|T^-_{s_c}\varphi-u_-\|_\infty\leq \Delta_c$. By \eqref{t<e}, 
\[\|T^-_{t+s_c}\varphi-u_-\|_\infty\leq \|T^-_{s_c}\varphi-u_-\|_\infty C_c e^{(-A+2c)t}\leq \Delta_c C_c e^{(-A+2c)t}\quad \text{for all }t>0.\]
Thus,
\[\limsup_{t\to+\infty}\frac{\ln \|T^-_t\varphi-u_-\|_\infty}{t}\leq -A+2c\quad \text{for all }c>0.\]
By the arbitrariness of $c$, we conclude
\[\limsup_{t\to+\infty}\frac{\ln \|T^-_t\varphi-u_-\|_\infty}{t}\leq -A.\]
\qed

\medskip

\noindent {\bf Proof of Example \ref{ex}.} We take $w=0$. Then
\begin{align*}
&H(x,Dw,u_-(x))-\zeta\cdot \partial_u W(x,u_-(x))
\\ &=-(|D\phi|^2-\theta)\phi+(|D\phi|^2-\theta)\phi-\zeta|D\phi|^2+\zeta (|D\phi|^2-\theta)=-\zeta\theta<0.
\end{align*}
Thus,  (A3)$_2$ holds. By Theorem \ref{thm1}, $u_-$ is locally asymptotically stable.\qed

\section{Proof of Theorem \ref{thm2}}\label{Sec4}

As in Lemma \ref{>A}, we first obtain the following result.
\begin{lem}
Assume {\rm (A1)--(A2)} and {\rm (A4)$_1$}. For every Mather measure $\tilde \mu_-$ of the Hamiltonian $H_-(x,p)$, there exists a constant $A>0$ such that
\begin{equation}\label{<-A}
\int_{TM}\partial_u W(x,u_-(x))\, d\tilde\mu_-<-A.
\end{equation}
\end{lem}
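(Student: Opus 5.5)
I will follow the proof of Lemma \ref{>A} line by line, replacing $-\zeta$ with $+\zeta$. The starting point is (A4)$_1$: the constant $c_\zeta:=c\big(G(x,p)+W(x,u_-(x))+\zeta\cdot\partial_uW(x,u_-(x))\big)$ is negative. The Lagrangian of this Hamiltonian is $L(x,v)-W(x,u_-(x))-\zeta\,\partial_uW(x,u_-(x))$. By \eqref{muc} applied to it,
\[\min_{\tilde\mu}\int_{TM}\Big(L(x,v)-W(x,u_-(x))-\zeta\cdot\partial_uW(x,u_-(x))\Big)\,d\tilde\mu=-c_\zeta>0,\]
where the minimum runs over all closed measures. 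The application of \eqref{muc} is legitimate because the added term $W(x,u_-(x))+\zeta\,\partial_uW(x,u_-(x))$ is continuous in $x$, by (A2) and the continuity of $u_-$. Hence the new Hamiltonian still satisfies (A1), after the usual modification outside a large ball.

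Next I fix a Mather measure $\tilde\mu_-\in\widetilde{\mathfrak M}_-$ of $H_-$. Since $u_-$ solves $G(x,Du)+W(x,u_-(x))=0$, we have $c(H_-)=0$. Therefore
\[\int_{TM}\big(L-W(x,u_-(x))\big)\,d\tilde\mu_-=0.\]
Since $\tilde\mu_-$ is closed, it is admissible in the minimum above. Subtracting the two relations gives
\[-\zeta\int_{TM}\partial_uW(x,u_-(x))\,d\tilde\mu_-\ \ge\ -c_\zeta>0.\]
This yields $\int\partial_uW(x,u_-(x))\,d\tilde\mu_-\le c_\zeta/\zeta<0$ for every Mather measure.

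The bound $c_\zeta/\zeta$ is already uniform over $\widetilde{\mathfrak M}_-$. So I can take $A:=-c_\zeta/(2\zeta)>0$, which gives the strict inequality \eqref{<-A}. No compactness argument is needed. Alternatively, I could mirror Lemma \ref{>A}: the map $\tilde\mu\mapsto\int\partial_uW(x,u_-(x))\,d\tilde\mu$ is weakly continuous, and $\widetilde{\mathfrak M}_-$ is compact, so the supremum is attained and negative.

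The only point needing care is the integrability and finiteness bookkeeping. $L$ may take the value $+\infty$, but Mather measures are supported on a bounded subset of ${\rm Dom}(L)$, namely on $\widetilde{\mathcal A}$. So all the integrals involved are finite and the subtraction is legitimate. This step is routine and I do not expect a genuine obstacle.
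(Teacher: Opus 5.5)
Your proof is correct and is essentially the paper's argument: the proof of Lemma \ref{>A} with $-\zeta$ replaced by $+\zeta$, applying \eqref{muc} to the perturbed Hamiltonian and using $c(H_-)=0$ for a Mather measure $\tilde\mu_-$ of $H_-$. You also correctly observe that $c_\zeta/\zeta$ is already a uniform bound over $\widetilde{\mathfrak M}_-$, so the paper's compactness step is superfluous, and you obtain the explicit constant $A=-c_\zeta/(2\zeta)$.
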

Here, \eqref{<-A} holds for all Mather measures $\tilde \mu_-$, which is stronger than \cite[Assumption (A2)]{RWY}, where the corresponding inequality is required only for some Mather measures. The key difference is that, in our setting, one cannot expect the associated Mather measures to be invariant. Indeed, the argument in \cite[Lemma 3.4]{RWY} relies essentially on invariance properties of Mather measures, which are no longer available here. This obstruction is rooted in the fact that $H_-(x,p)$ is only Lipschitz continuous in $x$, since $u_-(x)$ is merely Lipschitz continuous. Even if we assume that $G(x,p)$ is $C^1$ and strictly convex in $p$, for a $u_-$-calibrated curves $\gamma$ belonging to $\mathcal A$, we formally have
\[\dot\gamma(s)=\partial_pG(\gamma(s),Du_-(\gamma(s))),\]
where $Du_-$ is only continuous by \cite[Theorem 7.8]{FS}, so the right-hand side is not sufficiently regular to guarantee uniqueness of solutions to this ODE. Consequently, calibrated curves may fail to be uniquely determined, and thus there is no canonical flow to define invariant measures. In view of these difficulties, we cannot rely on the dynamical characterization of Mather measures used in \cite{RWY}.

Similar to Lemmas \ref{>A} and \ref{A31A32}, and by Lemma \ref{D-c}, one can prove Proposition \ref{A4}. Similar to Lemma \ref{tc}, we obtain
\begin{lem}\label{tc2}
For every $T>0$, there exist constants $A>0$ and $t_0>T$ such that for any $u_-$-calibrated curve $\gamma:(-\infty,0]\to M$, we have
\[\frac{1}{t_0}\int_{-t_0}^0 \partial_u W(\gamma(s),u_-(\gamma(s)))\, ds\leq -A.\]
\end{lem}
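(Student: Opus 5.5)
We argue by contradiction, as in Lemma \ref{tc}, with the constant $A$ taken from the previous lemma, i.e. the one such that \eqref{<-A} holds for every $\tilde\mu_-\in\widetilde{\mathfrak M}_-$. We will show that there is $T_*>0$ such that for every $t_0\ge T_*$ and every $u_-$-calibrated curve $\gamma$,
\[
\frac{1}{t_0}\int_{-t_0}^0 \partial_u W\bigl(\gamma(s),u_-(\gamma(s))\bigr)\, ds\le -\frac{A}{2}.
\]
Given $T>0$, we then take $t_0:=\max\{T,T_*\}+1$. The statement follows with $A/2$ in place of $A$, which is harmless since the constant only has to be positive.

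Suppose no such $T_*$ exists. Then there are $t_n\to+\infty$ and $u_-$-calibrated curves $\gamma_n:(-\infty,0]\to M$ with
\[
\frac{1}{t_n}\int_{-t_n}^0\partial_uW\bigl(\gamma_n(s),u_-(\gamma_n(s))\bigr)\,ds>-\frac A2 .
\]
Define probability measures $\tilde\mu_n$ on $TM$ by averaging $f(\gamma_n(s),\dot\gamma_n(s))$ over $[-t_n,0]$. The velocities $\dot\gamma_n$ are uniformly bounded. This follows from the argument in the proof of Lemma \ref{d<ep}: $u_-$ is Lipschitz by coercivity, calibration gives $u_-(\gamma_n(b))-u_-(\gamma_n(a))=\int_a^b(L-W)\,ds$, and $L$ is superlinear. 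Hence all supports lie in a fixed compact subset of $TM$, and Prokhorov's theorem yields a subsequence with $\tilde\mu_n\rightharpoonup\tilde\mu_*$.

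Next we check that $\tilde\mu_*$ is a Mather measure for $H_-$. Closedness is verified exactly as in Lemma \ref{tc}: for $\phi\in C^1(M)$, the integral of $\langle D\phi,v\rangle$ equals $(\phi(\gamma_n(0))-\phi(\gamma_n(-t_n)))/t_n\to0$. For minimality, calibration gives
\[
\int (L-W(x,u_-))\,d\tilde\mu_n=\frac{u_-(\gamma_n(0))-u_-(\gamma_n(-t_n))}{t_n}\to0 .
\]
Since $L$ is only lower semicontinuous, we apply lower semicontinuity of integrals under weak convergence of measures with uniformly bounded supports. This gives $\int (L-W(x,u_-))\,d\tilde\mu_*\le 0=-c(H_-)$. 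Combined with \eqref{muc}, which gives the reverse inequality, equality holds and $\tilde\mu_*\in\widetilde{\mathfrak M}_-$.

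Finally, $\partial_uW(x,u_-(x))$ is continuous and bounded, so we may pass to the limit in the contradiction hypothesis:
\[
\int\partial_uW\bigl(x,u_-(x)\bigr)\,d\tilde\mu_*\ge -\frac A2>-A,
\]
which contradicts \eqref{<-A}.

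The main obstacle is the minimality step, where $L$ may take the value $+\infty$ and is only lower semicontinuous. We handle it with the portmanteau inequality for lower semicontinuous integrands bounded below on compact supports. A second point to watch is that we need the estimate for all sufficiently large $t_0$ uniformly over calibrated curves, not merely along one sequence. This is exactly what the contradiction setup (negating "there exists $T_*$") provides.
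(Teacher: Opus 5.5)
Your argument is correct and is essentially the paper's proof. The paper simply reruns the contradiction argument of Lemma \ref{tc} along a sequence $t_n>T$, $t_n\to+\infty$: empirical measures on calibrated curves converge to a Mather measure of $H_-$, contradicting \eqref{<-A}, exactly as you do. Your explicit use of lower semicontinuity of $\int L$ in the minimality step is a worthwhile refinement of that sketch. So is your observation that the constant can be chosen independently of $T$, which the proof of Theorem \ref{thm2} actually needs, since there $T$ is chosen after $A$.
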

Here we note that in the proof of Lemma \ref{tc}, for each $T>0$, one can take a sequence $t_n>T$ with $t_n\to+\infty$ to derive a contradiction.

Let $c=\frac{A}{2}$, where $A>0$ is given in Lemma \ref{tc2}, and let $\bar\delta>0$ be the constant given in Lemma \ref{bardelta}. For $\ep>0$ sufficiently small, we define
\[\Delta:=\bar\delta,\quad u_\ep:=u_--\ep.\]
It suffices to show that for any $\ep\in (0,1)$,
\[\limsup_{t\to+\infty}\max_{x\in M}\big(u_-(x)-T^-_t u_\ep(x)\big)\geq \Delta.\]
We argue by contradiction. Suppose that there exists $\ep_0\in(0,1)$ such that
\[\limsup_{t\to+\infty}\max_{x\in M}\big(u_-(x)-T^-_t u_{\ep_0}(x)\big)<\Delta.\]
By this assumption, we know that for all $x\in M$, there is $t'>0$ independent of $x$ such that for all $t>t'$,
\[u_-(x)-T^-_t u_{\ep_0}(x)\leq \max_{x\in M}\big(u_-(x)-T^-_t u_{\ep_0}(x)\big)<\Delta.\]
Define
\begin{equation}\label{<Del}
t_1:=\inf\{\tau:\ u_-(x)-T^-_tu_{\ep_0}(x)<\Delta\quad \text{for all }x\in M\text{ and for all }t>\tau\}.
\end{equation}
Then $t_1\leq t'<+\infty$. Let $t_0>T$ be given in Lemma \ref{tc2}, where $T>0$ is a large constant that will be chosen later. Let $x\in M$ and let $\gamma:(-\infty,0]\to M$ be a $u_-$-calibrated curve satisfying $\gamma(0)=x$. Set $\hat t:=t_1+t_0$.

Define
\[v(s):=u_-(\gamma(s-\hat t)),\quad v_{\ep_0}(s):=T^-_s u_{\ep_0}(\gamma(s-\hat t)),\quad s\in [0,\hat t].\]
For every $s,\Delta s$ satisfying $0\leq s<s+\Delta s\leq\hat t$, we have
\[v(s+\Delta s)=v(s)+\int_s^{s+\Delta s}\Big(L(\gamma(\tau-\hat t),\dot\gamma(\tau-\hat t))-W(\gamma(\tau-\hat t),v(\tau))\Big)\, d\tau,\]
and
\[v_{\ep_0}(s+\Delta s)\leq v_{\ep_0}(s)+\int_s^{s+\Delta s}\Big(L(\gamma(\tau-\hat t),\dot\gamma(\tau-\hat t))-W(\gamma(\tau-\hat t),v_{\ep_0}(\tau))\Big)\, d\tau.\]
Define
\[\bar v(s):=v(s)-v_{\ep_0}(s),\quad s\in [0,\hat t].\]
By Proposition \ref{Tprop} (1), $\bar v(s)\geq 0$ for all $s\in [0,\hat t]$. We then obtain
\begin{align*}
\bar v(s+\Delta s)&\geq \bar v(s)+\int_s^{s+\Delta s}\Big(W(\gamma(\tau-\hat t),v_{\ep_0}(\tau))-W(\gamma(\tau-\hat t),v(\tau))\Big)\, d\tau
\\ &= \bar v(s)-\int_s^{s+\Delta s}\bar v(\tau)\int_0^1\partial_u W(\gamma(\tau-\hat t),v(\tau)-\theta \bar v(\tau))\, d\theta\, d\tau.
\end{align*}
Since $Du_{\ep_0}=Du_-$, $T^-_t u_{\ep_0}$ is Lipschitz continuous on $M\times[0,\hat t]$. Thus, $\bar v$ is Lipschitz continuous, and we deduce that
\[\dot{\bar v}(s)\geq -\bar v(s)\int_0^1\partial_u W(\gamma(\tau-\hat t),v(\tau)-\theta \bar v(\tau))\, d\theta\quad \text{for a.e. }s\in[0,\hat t].\]
Define
\[g(s):=-\int_0^1\partial_u W(\gamma(s-\hat t),v(s)-\theta \bar v(s))\, d\theta,\quad s\in [0,\hat t].\]
Consider the Cauchy problem
\begin{equation*}
  \left\{
   \begin{aligned}
   &\dot v_0(s)=v_0(s)g(s),
   \\
   &v_0(0)=\ep_0.
   \end{aligned}
   \right.
\end{equation*}
Using the comparison principle and the bound $|\partial_u W|\leq \Lambda$, we get
\begin{align*}
&u_-(x)-T^-_{\hat t}u_{\ep_0}(x)=\bar v(\hat t)
\\ &\geq v_0(\hat t)=\ep_0 e^{\int_0^{\hat t} g(s)\, ds}=\ep_0 e^{\int_0^{t_1} g(s)\, ds} e^{\int_{t_1}^{t_1+t_0} g(s)\, ds}\geq \ep_0 e^{-\Lambda t_1} e^{\int_{t_1}^{t_1+t_0} g(s)\, ds}.
\end{align*}
Recall that $\Delta=\bar\delta$, by \eqref{<Del}, we have
\[0\leq \bar v(t)=u_-(\gamma(t-\hat t))-T^-_t u_{\ep_0}(\gamma(s-\hat t))\leq \bar\delta\quad \text{for all }s\in [t_1,t_1+t_0].\]
Using Lemmas \ref{bardelta} and \ref{tc2}, we obtain
\begin{align*}
\int_{t_1}^{t_1+t_0} g(s)\, ds&=-\int_{t_1}^{t_1+t_0} \int_0^1\partial_u W(\gamma(s-\hat t),v(s)-\theta \bar v(s))\, d\theta\, ds
\\ &\geq \int_{t_1}^{t_1+t_0} \Big(-\partial_u W(\gamma(s-\hat t),u_-(\gamma(s-\hat t)))-c\Big)\, ds
\\ &=\int_{-t_0}^{0} \Big(-\partial_u W(\gamma(s),u_-(\gamma(s)))-c\Big)\, ds\geq At_0-ct_0=\frac{A}{2}t_0.
\end{align*}
Choosing
\[T=\max\Big\{1,\frac{\Lambda t_1}{A}+\frac{2}{A}\ln\frac{2\Delta}{\ep_0}\Big\},\]
which depends only on $t_1$, $\Delta$, $\ep_0$ and $A$, we ensure that
\[u_-(x)-T^-_{\hat t}u_{\ep_0}(x)\geq \ep_0e^{-\Lambda t_1}e^{\frac{A}{2}t_0}\geq 2\Delta>\Delta,\]
which contradicts \eqref{<Del}.

\medskip

The proof is now complete.

\section{Proof of Theorem \ref{thm3}}\label{Sec5}

We assume that there exists a solution $u_-$ of \eqref{se}. We aim to prove that $u_-$ is the unique solution of \eqref{se} and is globally asymptotically stable.

Similar to Lemma \ref{A31A32}, one can show that {\rm (A5)$_1$} and {\rm (A5)$_2$} are equivalent. By {\rm (A5)$_1$}, {\rm (A3)$_1$} holds for $u_-$. Then, by Theorem \ref{thm1}, there is $\Delta>0$ such that for any $\delta\in(0,\Delta)$, 
\begin{equation}\label{limudelta}
\lim_{t\to+\infty}T^-_tu_\delta=\lim_{t\to+\infty}T^-_t u^\delta=u_-\quad \text{uniformly},
\end{equation}
where
\[u_\delta:=u_--\delta,\quad u^\delta:=u_-+\delta.\]
Since $u_->u_\delta$ and $\lim_{t\to+\infty}T^-_t u_\delta=u_-$ uniformly, there exists $\tau>0$ such that
\[u_-\geq T^-_tu_\delta\geq u_\delta \quad \text{for all }t\geq \tau.\]
By Proposition \ref{T-T+},
\begin{equation}\label{thm31}
T^+_t u_\delta\leq T^+_t\circ T^-_tu_\delta\leq u_\delta<u_-<u^\delta \quad \text{for all }t\geq \tau.
\end{equation}

\begin{lem}\label{Mt}
Let $\varphi\in C(M)$. Assume {\rm (A1)'} and that $(x,t)\mapsto T^+_t\varphi(x)$ is bounded on $M\times[0,+\infty)$. Define
\[M_t(x):=\sup_{s\geq t}T^+_s\varphi(x).\]
Then $M_t(x)$ uniformly converges to a forward weak KAM solution of \eqref{se} as $t\to+\infty$.
\end{lem}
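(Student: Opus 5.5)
The plan is to prove that the running suprema $M_t$ converge monotonically, that the limit is a fixed point of $T^+_s$, and then to upgrade pointwise to uniform convergence by Dini's theorem.

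\textbf{Monotonicity, boundedness, regularity.} By construction $t\mapsto M_t(x)$ is nonincreasing. It is bounded below by $T^+_t\varphi(x)$ and above by $\sup_{s\ge0}\|T^+_s\varphi\|_\infty$, so the pointwise limit
\[M_\infty(x):=\lim_{t\to+\infty}M_t(x)=\limsup_{s\to+\infty}T^+_s\varphi(x)\]
exists. By Proposition \ref{equi}, applied under (A1)' and the boundedness assumption, the family $\{T^+_s\varphi\}_{s\ge1}$ is equi-Lipschitz with some constant $K$. A supremum of $K$-Lipschitz functions is $K$-Lipschitz, so each $M_t$ with $t\ge1$ is $K$-Lipschitz, and hence so is $M_\infty$. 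Once $M_\infty$ is known to be continuous, Dini's theorem on the compact manifold $M$ (a monotone sequence of continuous functions with continuous limit) gives uniform convergence $M_t\to M_\infty$. It therefore remains to show that $M_\infty$ is a forward weak KAM solution, i.e. $T^+_\sigma M_\infty=M_\infty$ for every $\sigma>0$.

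\textbf{Subsolution direction.} By monotonicity of $T^+_\sigma$ (the analogue of Proposition \ref{Tprop}(1) with non-strict inequalities), $M_t\ge T^+_{s}\varphi$ for each $s\ge t$ gives $T^+_\sigma M_t\ge T^+_{\sigma+s}\varphi$. Taking the supremum over $s\ge t$ yields
\[T^+_\sigma M_t\ge M_{t+\sigma}.\]
The analogue of Proposition \ref{Tprop}(2) gives $\|T^+_\sigma M_t-T^+_\sigma M_\infty\|_\infty\le e^{\Lambda\sigma}\|M_t-M_\infty\|_\infty\to0$, using the uniform convergence already obtained. Letting $t\to\infty$ therefore gives $T^+_\sigma M_\infty\ge M_\infty$.

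\textbf{Reverse inequality (main obstacle).} This is the hard step, since a supremum does not commute with $T^+_\sigma$ in the favourable direction. I would use the pointwise identity $M_\infty=\limsup_{s\to\infty}T^+_s\varphi$, which coincides with the upper half-relaxed limit $\hat\varphi$ of Proposition \ref{limsup} because of the equi-Lipschitz property. Proposition \ref{limsup} then states directly that $\hat\varphi$ is a forward weak KAM solution.

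If one prefers an argument not resting on that identification, the fallback is to show $T^+_\sigma M_\infty\le M_\infty$ by hand. Fix $x$ and take an optimal curve $\gamma$ for $T^+_\sigma M_\infty(x)$; existence follows from the uniform velocity bounds. Along $\gamma$, the Lipschitz continuity and monotonicity of $W$ in $u$ let one compare $T^+_\sigma M_\infty(x)$ with $T^+_\sigma(T^+_{s}\varphi)(x)=T^+_{s+\sigma}\varphi(x)$. Choose $s$ along a sequence with $T^+_s\varphi(\gamma(\sigma))\to M_\infty(\gamma(\sigma))$, and use the Gronwall-type control with the factor $e^{\Lambda\sigma}$ as in the proofs of Theorems \ref{thm1}--\ref{thm2}. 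Passing to the $\limsup$ gives $T^+_\sigma M_\infty(x)\le \limsup_s T^+_{s+\sigma}\varphi(x)=M_\infty(x)$.

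The delicate point in this fallback is that the implicit action term depends on the values $T^+_{s+\sigma-\tau}\varphi$ along the whole curve, not only at its endpoint. Each of these values must be bounded above by $M_\infty$ up to an error that vanishes as $s\to\infty$, and this is exactly where the uniform convergence $M_t\to M_\infty$ from the first step is used.
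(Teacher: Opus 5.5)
Your main line is exactly the paper's proof: monotonicity of $t\mapsto M_t$, the equi-Lipschitz bound from Proposition \ref{equi}, Dini's theorem, and then identifying $M_\infty$ with the upper half-relaxed limit $\hat\varphi$ so that Proposition \ref{limsup} gives the forward weak KAM property. The separate inequality $T^+_\sigma M_\infty\ge M_\infty$ and the hand-made fallback are therefore unnecessary. If you do keep the fallback, note that (A2) gives no monotonicity of $W$ in $u$, so its Gronwall comparison must be closed using only $|\partial_uW|\le\Lambda$ together with your bound $T^+_{\sigma'}M_\infty\ge M_\infty$.
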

\begin{proof}
It is clear that $M_t(x)$ is non-increasing in $t$. By Proposition \ref{equi}, for $t\geq 1$ and for all $x,y\in M$, we have
\[\bigg|\sup_{s\geq t}T^+_s\varphi(x)-\sup_{s\geq t}T^+_s\varphi(y)\bigg|\leq \sup_{s\geq t}|T^+_s\varphi(x)-T^+_s\varphi(y)|\leq \kappa d(x,y),\]
where $\kappa$ is the Lipschitz constant for $\{T^+_t\varphi\}_{t\geq 1}$, which is independent of $t$. Hence, the family $\{M_t(x)\}_{t\geq 1}$ is equi-Lipschitz continuous. Define
\[\varphi_\infty(x):=\lim_{t\to+\infty}M_t(x),\]
then $\varphi_\infty$ is also $\kappa$-Lipschitz continuous. By Dini's Theorem, $M_t(x)$ converges uniformly to $\varphi_\infty$. Therefore, the upper half-relaxed limit in Proposition \ref{limsup} coincides with $\varphi_\infty$, and $\varphi_\infty$ is a forward weak KAM solution of \eqref{se}.
\end{proof}

Similarly, we obtain the following lemma.
\begin{lem}\label{Mt-}
Let $\varphi\in C(M)$. Assume {\rm (A1)'} and that $(x,t)\mapsto T^-_t\varphi(x)$ is bounded on $M\times[0,+\infty)$. Define
\[M_t(x):=\inf_{s\geq t}T^-_s\varphi(x).\]
Then $M_t(x)$ uniformly converges to a solution of \eqref{se} as $t\to+\infty$.
\end{lem}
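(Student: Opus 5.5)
The plan is to mirror the proof of Lemma \ref{Mt}, with sup replaced by inf and the forward semigroup replaced by the backward one.

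\textbf{Monotonicity and equi-Lipschitz bound.} Set $M_t(x):=\inf_{s\ge t}T^-_s\varphi(x)$. It is nondecreasing in $t$. Since $T^-_t\varphi$ is bounded on $M\times[0,+\infty)$, $M_t$ is bounded uniformly in $t$. By Proposition \ref{equi}, which uses (A1)', the family $\{T^-_s\varphi\}_{s\ge 1}$ is $\kappa$-Lipschitz for some $\kappa$ independent of $s$. Since an infimum of $\kappa$-Lipschitz functions is $\kappa$-Lipschitz,
\[|M_t(x)-M_t(y)|\le \sup_{s\ge t}|T^-_s\varphi(x)-T^-_s\varphi(y)|\le \kappa d(x,y)\quad\text{for } t\ge1 .\]

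\textbf{Uniform convergence.} The pointwise limit $\varphi_\infty(x):=\lim_{t\to+\infty}M_t(x)$ exists by monotonicity and boundedness, and it is $\kappa$-Lipschitz. Since $M$ is compact and the convergence is monotone to a continuous limit, Dini's theorem gives $M_t\to\varphi_\infty$ uniformly.

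\textbf{Identifying the limit.} The main step is to show that $\varphi_\infty$ equals the lower half-relaxed limit $\check\varphi$ of Proposition \ref{limsup}. That proposition then says $\check\varphi$ is a (Lipschitz) solution of \eqref{se}, which is the claim.

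First, $\check\varphi\le\varphi_\infty$. For $r>0$ and $t>1/r$, the definition of $M_t$ gives $\check\varphi$'s approximating infimum as
\[\inf\{T^-_s\varphi(y):\ d(x,y)<r,\ s>1/r\}=\inf_{d(x,y)<r}M_{1/r}(y)\le M_{1/r}(x)\le\varphi_\infty(x).\]
Letting $r\to0^+$ yields $\check\varphi(x)\le\varphi_\infty(x)$.

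Conversely, the same identity together with the equi-Lipschitz bound gives
\[\inf_{d(x,y)<r}M_{1/r}(y)\ge M_{1/r}(x)-\kappa r .\]
Letting $r\to0^+$ and using $M_{1/r}(x)\to\varphi_\infty(x)$ gives $\check\varphi\ge\varphi_\infty$. Hence $\varphi_\infty=\check\varphi$.

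No real obstacle is expected; the only point needing care is this equality, which depends on having the equi-Lipschitz estimate uniform in $t\ge1$.
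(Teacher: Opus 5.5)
Your proposal is correct and follows essentially the same route as the paper, which obtains this lemma by repeating the proof of Lemma \ref{Mt}: monotonicity in $t$, the uniform Lipschitz bound from Proposition \ref{equi}, Dini's theorem, and identification of the limit with the lower half-relaxed limit $\check\varphi$ of Proposition \ref{limsup}. Your two-sided estimate $M_{1/r}(x)-\kappa r\le \inf_{d(x,y)<r}M_{1/r}(y)\le\varphi_\infty(x)$ makes explicit the identification $\check\varphi=\varphi_\infty$, which the paper asserts without detail.
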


\begin{lem}\label{ubd}
$T^+_tu_\delta$ is unbounded from below.
\end{lem}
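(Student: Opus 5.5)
We argue by contradiction and assume that $T^+_tu_\delta$ is bounded from below. Then it is bounded on $M\times[0,+\infty)$: the upper bound follows from \eqref{thm31}, which gives $T^+_tu_\delta\le u_\delta$ for $t\ge\tau$, together with Proposition \ref{Tprop} (2) for $t\in[0,\tau]$. So Lemma \ref{Mt} applies with $\varphi=u_\delta$. It yields a forward weak KAM solution
\[
u_+:=\lim_{t\to+\infty}\sup_{s\ge t}T^+_su_\delta .
\]
By \eqref{thm31} we have $u_+\le u_\delta=u_--\delta$, hence $u_+<u_-$ everywhere.

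Next we pass from $u_+$ to a backward solution. Proposition \ref{u-u+} tells us that $T^-_tu_+\ge u_+$ and that $\tilde u_-:=\lim_{t\to+\infty}T^-_tu_+$ exists and solves \eqref{se}. By monotonicity (Proposition \ref{Tprop} (1)) and $u_+\le u_\delta$, we get $T^-_tu_+\le T^-_tu_\delta$. Letting $t\to+\infty$ and using \eqref{limudelta} gives $\tilde u_-\le u_-$. So $\tilde u_-$ is a solution of \eqref{se} lying below $u_-$, and $\tilde u_-\ge u_+$.

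The contradiction should come from (A5)$_1$ applied to suitable subsolutions. Both $\tilde u_-$ and $u_-$ are subsolutions of \eqref{se}, and so is $u_+$, since forward weak KAM solutions are dominated by $L_H$. I would compare $\tilde u_-$ with $u_-$. If $\tilde u_-=u_-$, I return to the construction: $u_+$ is a forward solution with $u_+\le u_--\delta$ and $\lim T^-_tu_+=u_-$. Along a $u_+$-calibrated forward curve, the function $u_--u_+\ge\delta$ should evolve by a linear ODE with coefficient $\partial_uW$, and (A5)$_1$ (via the analogue of Lemma \ref{>A} for the subsolution $u_+$, or for the convex-combination subsolutions in between) should force this gap to contract. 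That contradicts the gap staying $\ge\delta$ for all time.

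If instead $\tilde u_-\ne u_-$, then $\tilde u_-<u_-$ somewhere. Since (A5)$_1$ gives (A3)$_1$ for $\tilde u_-$, Theorem \ref{thm1} makes $\tilde u_-$ locally asymptotically stable too. Consider the family $u_\lambda:=\lambda u_-+(1-\lambda)\tilde u_-$. Its members are subsolutions by convexity of $G$ in $p$, up to the $W$-term, whose error is controlled by $\Lambda$ and the mean value theorem. Applying (A5)$_1$ along this segment, the function $\lambda\mapsto c(H(x,p,u_\lambda))$ should be strictly monotone, which contradicts its vanishing at both endpoints $\lambda=0$ and $\lambda=1$.

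The main obstacle is making this last step rigorous. I would try a derivative formula like Lemma \ref{D-c}, taken along the direction $u_--\tilde u_-$ rather than along constants. (A5)$_1$ gives positivity of $\int\partial_uW\,d\tilde\mu$ over Mather measures, but the weight $u_--\tilde u_-$ is merely nonnegative, so that is not enough on its own; the point is to obtain strict negativity of a critical value. I expect this to be handled by choosing the right subsolution $w$ in (A5)$_1$, for instance $w=u_+$ or $w=\tilde u_-$.
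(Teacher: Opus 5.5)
Your setup is the paper's: the contradiction hypothesis, boundedness from \eqref{thm31}, the forward weak KAM solution $u_+:=\lim_{t\to+\infty}\sup_{s\ge t}T^+_su_\delta\le u_--\delta$ from Lemma \ref{Mt}, the solution $\tilde u_-:=\lim_{t\to+\infty}T^-_tu_+$ with $u_+\le\tilde u_-\le u_-$, and the split $\tilde u_-=u_-$ versus $\tilde u_-\neq u_-$. However, neither case is closed, and the mechanisms you propose do not work.

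\textbf{Case $\tilde u_-=u_-$.} The missing idea is Proposition \ref{T-T+}. Since $T^-_tu_+\to u_-\ge u_++\delta$ uniformly, there is $t_0$ with $u_+<T^-_{t_0}u_+$ on $M$. Applying the strictly monotone operator $T^+_{t_0}$ and using $T^+_{t_0}u_+=u_+$ gives $u_+<T^+_{t_0}\circ T^-_{t_0}u_+\le u_+$. (A5)$_1$ plays no further role here.

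Your ODE along forward $u_+$-calibrated curves cannot replace this, for three reasons:
\begin{enumerate}
\item The gap $u_--u_+\ge\delta$ is not small, so the linearization of Lemma \ref{bardelta} is unavailable.
\item The coefficient you would need to control is the averaged quantity $\int_0^1\partial_uW(x,u_++\theta(u_--u_+))\,d\theta$. (A5)$_1$ only concerns $\partial_uW(x,w(x))$ for one subsolution $w$ at a time.
\item That argument never uses the defining property $\lim_{t\to+\infty}T^-_tu_+=u_-$ of this case, and that property is exactly what produces the contradiction.
\end{enumerate}

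\textbf{Case $\tilde u_-\neq u_-$.} The functions $u_\lambda=\lambda u_-+(1-\lambda)\tilde u_-$ are in general not subsolutions of \eqref{se}. Convexity of $G$ only gives $G(x,Du_\lambda)+W(x,u_\lambda)\le W(x,u_\lambda)-\lambda W(x,u_-)-(1-\lambda)W(x,\tilde u_-)$. The right-hand side need not be $\le 0$, since $W(x,\cdot)$ is not assumed convex. So (A5)$_1$ cannot be applied along the segment. The strict monotonicity of $\lambda\mapsto c(H(x,p,u_\lambda(x)))$ is, as you admit, unproved.

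What you need is to actually use the local asymptotic stability of $\tilde u_-$ that you already noted (Theorem \ref{thm1} applied to $\tilde u_-$, via (A5)$_1$), say with radius $\Delta_1\in(0,\delta)$. Set $\varphi:=\max\{u_++\Delta_1,\tilde u_-\}$.
\begin{enumerate}
\item Since $\tilde u_-\le\varphi\le\tilde u_-+\Delta_1$, stability gives $T^-_t\varphi\to\tilde u_-$.
\item Since $u_+\le u_--\delta$, we have $\varphi\le u_-$.
\item Since $\sup_{s\ge t}T^+_su_\delta\to u_+$ uniformly, $T^+_{t_1}u_\delta\le\varphi$ for some $t_1$. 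Proposition \ref{T-T+} then gives $T^-_tu_\delta\le T^-_t\circ T^-_{t_1}\circ T^+_{t_1}u_\delta\le T^-_{t+t_1}\varphi\le u_-$.
\item Letting $t\to+\infty$ and using \eqref{limudelta} forces $T^-_t\varphi\to u_-\neq\tilde u_-$, a contradiction.
\end{enumerate}

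In both cases the proof rests on the order relations $T^+_t\circ T^-_t\le\mathrm{id}\le T^-_t\circ T^+_t$ and on the stability of the backward solutions. It does not use a derivative of critical values in the direction $u_--\tilde u_-$.
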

\begin{proof}
We argue by contradiction. Assume that $T^+_tu_\delta$ is bounded from below. Then, by \eqref{thm31}, $T^+_tu_\delta$ is in fact bounded. We define
\[v_+(x):=\lim_{t\to +\infty}\sup_{s\geq t}T^+_tu_\delta(x).\]
By Lemma \ref{Mt}, $v_+$ is a forward weak KAM solution of \eqref{se}. By \eqref{thm31},
\begin{equation}\label{thm32}
v_+\leq u_\delta<u_-.
\end{equation}
Define
\[v_-:=\lim_{t\to+\infty}T^-_tv_+.\]
By Proposition \ref{u-u+}, $v_-$ is a solution of \eqref{se}, and
\begin{equation}\label{thm33}
v_+\leq v_-\leq u_-,
\end{equation}
where we recall that $T^-_tv_+\leq T^-_tu_-=u_-$.

\medskip

\noindent {\bf Case 1.} $v_-=u_-$. Then $\lim_{t\to+\infty}T^-_t v_+=u_-$. By \eqref{thm32}, there exists $t_0>0$ such that $v_+<T^-_{t_0}v_+$. Hence, by Proposition \ref{T-T+}, we obtain
\[v_+=T^+_{t_0}v_+<T^+_{t_0}\circ T^-_{t_0}v_+\leq v_+,\]
which leads to a contradiction.

\medskip

\noindent {\bf Case 2.} $v_-\neq u_-$. By Theorem \ref{thm1}, $\|v_--u_-\|_\infty>\Delta$ and since (A3)$_1$ also holds for $v_-$, there is $\Delta_1\in (0,\delta)$ such that
\begin{equation}\label{thm34}
\lim_{t\to+\infty}T^-_t\varphi=v_-\quad \text{for all }\varphi\in C(M)\text{ with }\|\varphi-v_-\|_\infty\leq \Delta_1.
\end{equation}
Let \[\varphi(x):=\max\{v_+(x)+\Delta_1,v_-(x)\}.\]
By \eqref{thm32} and \eqref{thm33}, we have
\begin{equation}\label{thm35}
v_+< \varphi\leq v_-+\Delta_1,
\end{equation}
and
\begin{equation}\label{thm36}
\varphi\leq \max\{u_\delta+\delta,v_-\}\leq u_-.
\end{equation}
By \eqref{thm34},
\[v_-=\lim_{t\to+\infty}T^-_tv_+\leq \lim_{t\to+\infty}T^-_t\varphi\leq \lim_{t\to+\infty}T^-_t(v_-+\Delta_1)=v_-,\]
which implies
\begin{equation}\label{thm37}
\lim_{t\to+\infty}T^-_t\varphi=v_-.
\end{equation}
On the other hand, by Lemma \ref{Mt},
\[\sup_{s\geq t}T^+_tu_\delta(x)\to v_+(x)\quad \text{uniformly as }t\to+\infty.\]
By \eqref{thm35}, there is $t_1>0$ such that
\[T^+_{t_1}u_\delta\leq \varphi.\]
By \eqref{thm36}, we then obtain
\[T^-_tu_\delta\leq T^-_t\circ T^-_{t_1}\circ T^+_{t_1}u_\delta\leq T^-_{t+t_1}\varphi\leq T^-_{t+t_1}u_-=u_-.\]
Letting $t\to+\infty$, we get $\lim_{t\to+\infty}T^-_t \varphi=u_-$, which contradicts \eqref{thm37}.
\end{proof}

\begin{lem}\label{Q}
Let $\varphi\in C(M)$. If $T^+_t\varphi$ is unbounded from below, then for any $Q\in\R$, there exists $s>0$ such that $T^+_s\varphi(x)\leq Q$ for all $x\in M$.
\end{lem}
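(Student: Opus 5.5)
Unboundedness from below gives, for any $Q$, a time $t$ and a point $x_0$ with $T^+_t\varphi(x_0)$ very negative. The task is to spread this smallness from the single point $x_0$ to the whole manifold at a slightly later time. I plan to do this with the variational formula for $T^+$ and a uniform Lipschitz-type estimate over a fixed time step.

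\textbf{Step 1 (spreading estimate).} The claim is: there is a constant $K$ such that for every $\psi\in C(M)$ with $\psi\le 0$ somewhere, and for all $x\in M$,
\[
T^+_1\psi(x)\le \min_{y\in M}\psi(y)+K\Big(1+\big|\min_M \psi\big|\Big).
\]
To prove it, fix $x$ and use the definition of $T^+_1\psi(x)$ as a supremum over curves $\gamma$ with $\gamma(0)=x$. For any competitor curve, the term $\psi(\gamma(1))$ is what matters; the Lagrangian term $-\int_0^1 L_H(\gamma,\dot\gamma,T^+_{1-s}\psi)\,ds$ must be bounded above. Since $L_H(x,v,u)=L(x,v)-W(x,u)$ and $L\ge -\max_x G(x,0)$, we get
\[
-\int L_H\le C+\int_0^1 W\big(\gamma,T^+_{1-s}\psi(\gamma)\big)\,ds .
\]
By (A2), $|W(x,u)|\le |W(x,0)|+\Lambda|u|$. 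Proposition \ref{Tprop}(2), applied by comparison with a constant function, gives $|T^+_{r}\psi|$ bounded by $e^{\Lambda}(\|\psi\|_\infty+C)$. This bound involves $\|\psi\|_\infty$, not the minimum, which is too weak. I therefore handle the estimate differently, in Step 2.

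\textbf{Step 2 (use monotonicity instead).} Replace $\psi$ by its upper envelope. Take a Lipschitz cone function
\[
\phi_{x_0,m}(y):=m+R\,d(x_0,y)\ \ \text{capped at } \max\psi ,
\]
with $R$ large, $m=\psi(x_0)$ very negative and $\phi\ge\psi$ in a sense to be arranged. Monotonicity (Proposition \ref{Tprop}(1)) gives $T^+_t\psi\le T^+_t\phi$. For $T^+_1\phi(x)$, use the curve $\gamma$ that is the constant-speed geodesic from $x$ to $x_0$, of speed at most $\diam(M)$, along which $L$ is bounded by some $C_0$. For the forward semigroup the supremum is an upper bound, so a single curve does not bound the sup. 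The correct tool is the inequality $T^+_t\circ T^-_t\le \mathrm{id}$ (Proposition \ref{T-T+}), or equivalently a duality argument: $T^+_1\psi\le Q$ everywhere if $\psi\le T^-_1 Q'$ for a suitable constant $Q'$.

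Concretely, I will pick $\tilde Q$ very negative and estimate $T^-_1\tilde Q$ from below by $\tilde Q e^{\Lambda}-C$ using Proposition \ref{Tprop}(2). I then need
\[
T^+_s\varphi\le T^-_1\tilde Q \quad\text{everywhere for some }s,
\]
after which applying $T^+_1$ yields
\[
T^+_{s+1}\varphi\le T^+_1T^-_1\tilde Q\le \tilde Q .
\]

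\textbf{Main obstacle.} This again requires smallness everywhere, so genuinely pointwise-to-global spreading is needed. For that I would invoke Proposition \ref{equi} under (A1)': on any time interval where $T^+_t\varphi$ stays bounded, the Lipschitz constants are controlled, and the quantity $\max_M T^+_t\varphi-\min_M T^+_t\varphi$ is at most $\kappa\,\diam(M)$ with $\kappa$ uniform. The difficulty is that Proposition \ref{equi} presumes global boundedness. I would instead reprove a local-in-time version via the Erdmann-type argument of \cite{NW} on $[s-1,s]$, giving an oscillation bound depending affinely on $\|T^+_{s-1}\varphi\|_\infty$. I would combine this with an upper bound, which is available in the application by \eqref{thm31} ($T^+_t\varphi\le u_\delta$), to conclude that the whole function falls below $Q$ once its minimum is sufficiently negative.
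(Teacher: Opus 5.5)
\textbf{There is a genuine gap: none of your routes reaches the conclusion.}

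Your Step~1 inequality is false. Already for $W\equiv 0$ one has $T^+_1\psi(x)=\sup_y\{\psi(y)-h_1(x,y)\}\ge\max_M\psi-C$. Taking $\min_M\psi=0$ and $\max_M\psi$ large violates $T^+_1\psi\le\min_M\psi+K(1+|\min_M\psi|)$.

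The duality route of Step~2 is circular. It needs $T^+_s\varphi\le T^-_1\tilde Q$ on all of $M$, which is the conclusion itself.

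Your final plan proves a different statement, and it does not close.
\begin{itemize}
\item It imports an upper bound on $T^+_t\varphi$ from \eqref{thm31}. That bound is valid only for $\varphi=u_\delta$ and is not a hypothesis of the lemma.
\item It relies on a local-in-time Lipschitz estimate that you do not prove.
\item Even granting an oscillation bound $\max_M T^+_s\varphi-\min_M T^+_s\varphi\le a+b\|T^+_{s-1}\varphi\|_\infty$, set $\min_M T^+_s\varphi=-N$. You only get $\max_M T^+_s\varphi\le -N+a+b\|T^+_{s-1}\varphi\|_\infty$, which need not tend to $-\infty$.
\item To conclude you would need $b<1$ together with $\|T^+_{s-1}\varphi\|_\infty\le N+O(1)$, or sublinear rather than affine growth. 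You have neither.
\end{itemize}

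\textbf{The missing idea.} A single curve bounds the supremum defining $T^+$ from \emph{below}, and that is exactly what an argument by contradiction needs. You discarded single curves because they give no upper bound. Here is the paper's argument.
\begin{itemize}
\item Suppose that for some $Q$ and every $t>0$ there is $y_t$ with $T^+_t\varphi(y_t)>Q$.
\item Take $t_n\to+\infty$ and $x_n$ with $T^+_{t_n}\varphi(x_n)\to-\infty$. The times diverge because $T^+_t\varphi$ is bounded on $M\times[0,T]$.
\item Put $y_n:=y_{t_n-1}$, and let $\alpha:[0,1]\to M$ be a constant-speed geodesic from $x_n$ to $y_n$, so $|\dot\alpha|\le\diam(M)$.
\item Set $F(s):=Q-T^+_{t_n-s}\varphi(\alpha(s))$, so $F(1)<0$. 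If $F(0)\le0$ you already contradict $T^+_{t_n}\varphi(x_n)\to-\infty$. Otherwise let $\sigma$ be the first zero of $F$, so $T^+_{t_n-s}\varphi(\alpha(s))\le Q$ on $[0,\sigma]$.
\item Test the dynamic programming formula for $T^+$ with $\alpha|_{[\tau,\sigma]}$. Use $-W(x,u)\le|W(x,Q)|+\Lambda(Q-u)$ for $u\le Q$, with $C_L:=\max\{|L(x,v)|:\ x\in M,\ |v|\le\diam(M)\}$ and $C_{W,Q}:=\max_{x\in M}|W(x,Q)|$.
\item This gives $F(\tau)\le\int_\tau^\sigma(C_L+C_{W,Q}+\Lambda F)\,ds$ for $\tau\in[0,\sigma)$.
\item Gronwall yields $T^+_{t_n}\varphi(x_n)\ge Q-(C_L+C_{W,Q})e^{\Lambda}$ uniformly in $n$, a contradiction.
\end{itemize}
This uses only the variational (semigroup) formula and (A2). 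It needs no Lipschitz estimate, no (A1)', and no a priori upper bound on $T^+_t\varphi$.
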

\begin{proof}
From the assumption, there exists a sequence $\{(x_n,t_n)\}$ satisfying $t_n\to +\infty$ and
\[T^+_{t_n}\varphi(x_n)\to -\infty\quad \text{as }n\to +\infty.\]
We argue by contradiction. Assume that there is $Q\in\R$ such that for each $t>0$, there exists $y_t\in M$ such that $T^+_t \varphi(y_t)>Q$. We then take a sequence $y_n$ such that
\[T^+_{t_{n}-1}\varphi(y_n)>Q\quad \text{for all }n\in \N.\]
We take a geodesic $\alpha:[0,1]\to M$ satisfying $\alpha(0)=x_n$ and $\alpha(1)=y_n$ with a constant speed. Then $|\dot\alpha|\leq\text{diam}(M)$. Define
\[F(s)=Q-T^+_{t_n-s}\varphi(\alpha(s)),\quad s\in[0,1].\]
Then $F(1)<0$. If $F(0)\leq 0$, then $T^+_{t_n}\varphi(x_n)\geq Q$, which leads to a contradiction. So we only need to consider the case where $F(0)>0$. By continuity, there is $\sigma\in [0,1)$ such that $F(\sigma)=0$ and $F(s)>0$ for all $s\in [0,\sigma)$. Thus, for all $\tau\in [0,\sigma)$,
\begin{align*}
T^+_{t_n-\tau}\varphi(\alpha(\tau))&\geq T^+_{t_{n}-\sigma}\varphi(\alpha(\sigma))-\int_\tau^{\sigma}\Big(L(\alpha(s),\dot\alpha(s))-W(\alpha(s),T^+_{t_n-s}\varphi(\alpha(s)))\Big)\, ds
\\ &\geq Q-\int_\tau^\sigma \Big(C_L+C_{W,Q}+\Lambda F(s)\Big)\, ds,
\end{align*}
where
\[C_L:=\max\{|L(x,v)|:\ x\in M,\ |v|\leq \text{diam}(M)\},\]
and
\[C_{W,Q}:=\max\{|W(x,Q)|:\ x\in M\}.\]
We get
\[F(\tau)\leq  \int_\tau^\sigma\Big(C_L+C_{W,Q}+\Lambda F(s)\Big)\,ds\leq C_L+C_{W,Q}+\Lambda\int_\tau^\sigma F(s)\, ds.\]
By Gronwall's inequality, 
\[F(\tau)\leq (C_L+C_{W,Q})e^{\Lambda(\sigma-\tau)}.\]
Letting $\tau=0$, we obtain
\[F(0)=Q-T^+_{t_n}\varphi(x_n)\leq (C_L+C_{W,Q})e^{\Lambda},\]
which contradicts the assumption that $T^+_{t_n}\varphi(x_n)\to -\infty$.
\end{proof}

A similar argument yields the following result.
\begin{lem}\label{Q2}
Let $\varphi\in C(M)$. If $T^-_t\varphi$ is unbounded from above, then for any $Q\in\R$, there exists $s>0$ such that $T^-_s\varphi(x)\geq Q$ for all $x\in M$.
\end{lem}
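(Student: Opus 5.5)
The plan is to mirror the proof of Lemma \ref{Q} with the roles of sup/inf and the direction of time reversed, now working with the backward semigroup $T^-_t$ and its representation formula \eqref{T-}. Assume $T^-_t\varphi$ is unbounded from above. Then there is a sequence $(x_n,t_n)$ with $T^-_{t_n}\varphi(x_n)\to+\infty$; we may take $t_n\to+\infty$, since on bounded time intervals $T^-_t\varphi$ stays bounded by Proposition \ref{Tprop} (2) compared with a constant-in-time bound of any fixed solution or by direct estimates. Arguing by contradiction, suppose there is $Q\in\R$ such that for every $t>0$ some $y_t\in M$ has $T^-_t\varphi(y_t)<Q$. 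Choose $y_n$ with $T^-_{t_n-1}\varphi(y_n)<Q$.

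Let $\alpha:[0,1]\to M$ be a constant-speed geodesic from $y_n$ to $x_n$, so $|\dot\alpha|\le\diam(M)$, and define
\[F(s):=T^-_{t_n-1+s}\varphi(\alpha(s))-Q,\qquad s\in[0,1].\]
Then $F(0)<0$, and $F(1)\to+\infty$ by assumption. If $F(1)\le 0$ we have nothing to do (it contradicts $F(1)\to+\infty$ for large $n$), so assume $F(1)>0$ and let $\sigma\in(0,1]$ be the last zero before $1$: $F(\sigma)=0$ and $F>0$ on $(\sigma,1]$. By the dynamic programming principle underlying \eqref{T-} (using $\alpha$ as a test curve on $[\sigma,\tau]$), for $\tau\in(\sigma,1]$,
\[T^-_{t_n-1+\tau}\varphi(\alpha(\tau))\le T^-_{t_n-1+\sigma}\varphi(\alpha(\sigma))+\int_\sigma^\tau\Big(L(\alpha,\dot\alpha)-W\big(\alpha(s),T^-_{t_n-1+s}\varphi(\alpha(s))\big)\Big)\,ds .\]
Using $L_H=L-W$, $T^-_{t_n-1+\sigma}\varphi(\alpha(\sigma))=Q$ and $-W(x,u)\le -W(x,Q)+\Lambda|u-Q|=-W(x,Q)+\Lambda F(s)$ on $(\sigma,\tau]$, we get $F(\tau)\le C_L+C_{W,Q}+\Lambda\int_\sigma^\tau F(s)\,ds$, with $C_L$ and $C_{W,Q}$ as in Lemma \ref{Q}. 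If $L$ can be $+\infty$ along the geodesic, first modify $G$ outside a large ball as in Section \ref{Sec2} so that $L$ is finite and continuous; alternatively, use $C_L=\max\{L(x,v):|v|\le\diam(M)\}$ after that modification.

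By Gronwall's inequality, $F(1)\le (C_L+C_{W,Q})e^{\Lambda}$, a bound independent of $n$. This contradicts $T^-_{t_n}\varphi(x_n)\to+\infty$. Hence for every $Q$ there is $s$ with $T^-_s\varphi\ge Q$ on $M$.

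The main point needing care is the direction of the sub-optimality inequality. For $T^-$ the test curve gives an upper bound for the later time in terms of the earlier one, so the crossing point must be taken at the earlier time and the time parameter must increase along $\alpha$. This is exactly the reverse of the $T^+$ setting in Lemma \ref{Q}. The finiteness of $L$ on bounded velocities is the other technical issue.
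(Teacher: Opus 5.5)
Your argument is correct and is the time-reversed version of the paper's proof of Lemma \ref{Q}, which is what the paper means by ``a similar argument''. You take the \emph{last} zero $\sigma$ of $F(s)=T^-_{t_n-1+s}\varphi(\alpha(s))-Q$ before $s=1$, apply the sub-optimality inequality for $T^-$ along $\alpha$ forward in time, and close with Gronwall to get the $n$-independent bound $T^-_{t_n}\varphi(x_n)\le Q+(C_L+C_{W,Q})e^{\Lambda}$. One small remark: in Section \ref{Sec5} assumption (A1)' is in force, so $L$ is finite and continuous and $C_L<+\infty$ directly. Your fallback of modifying $G$ outside a ball is therefore unnecessary, and it would not obviously leave $T^-_t\varphi$ unchanged when $\varphi$ is merely continuous.
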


\begin{lem}\label{<u-}
For any $\varphi\in C(M)$ with $\varphi\leq u_-$, we have $\lim_{t\to+\infty}T^-_t\varphi=u_-$ uniformly.
\end{lem}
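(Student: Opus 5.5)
Since $\varphi\leq u_-$, monotonicity (Proposition \ref{Tprop}(1)) gives $T^-_t\varphi\leq T^-_tu_-=u_-$ for all $t$, so the solution is bounded from above. The plan is to pinch $T^-_t\varphi$ from below by something converging to $u_-$. We may assume $\varphi<u_-$, replacing $\varphi$ by $\varphi-1$ if needed; convergence for a smaller initial datum together with the upper bound $u_-$ yields convergence for $\varphi$ by monotonicity.

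\emph{Step 1.} Take $\delta\in(0,\Delta)$ as in \eqref{limudelta}, so that $u_\delta=u_--\delta$ satisfies $T^-_tu_\delta\to u_-$. By Lemma \ref{ubd}, $T^+_tu_\delta$ is unbounded from below. By Lemma \ref{Q}, applied with $Q:=\min_M\varphi$, there is $s>0$ with $T^+_su_\delta\leq\varphi$ on $M$.

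\emph{Step 2.} Using Proposition \ref{T-T+} and monotonicity,
\[u_\delta\leq T^-_s\circ T^+_su_\delta\leq T^-_s\varphi.\]
Hence, for all $t\geq0$,
\[T^-_tu_\delta\leq T^-_{t+s}\varphi\leq u_-.\]

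\emph{Step 3.} Let $t\to+\infty$. By \eqref{limudelta}, $T^-_tu_\delta\to u_-$ uniformly, so $T^-_{t+s}\varphi\to u_-$ uniformly. Thus $\lim_{t\to+\infty}T^-_t\varphi=u_-$ uniformly.

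The main obstacle has already been handled in Lemma \ref{ubd}: the unboundedness of $T^+_tu_\delta$, which relies on (A5) and on Theorem \ref{thm1} applied to any competing solution $v_-$. The only remaining point needing care is the direction of the inequality in Proposition \ref{T-T+}, namely $\psi\leq T^-_t\circ T^+_t\psi$ with $\psi=u_\delta$. This is exactly what Step 2 requires, so the argument closes.
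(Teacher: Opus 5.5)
Your proof is correct and is essentially the paper's argument. Lemmas~\ref{ubd} and~\ref{Q} give $s>0$ with $T^+_su_\delta\leq\min_M\varphi\leq\varphi$; Proposition~\ref{T-T+} together with monotonicity then yields $T^-_tu_\delta\leq T^-_{t+s}\varphi\leq u_-$, and \eqref{limudelta} forces the conclusion. Your preliminary reduction to $\varphi<u_-$ is never used and can simply be dropped.
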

\begin{proof}
Let
\[Q:=\inf_{x\in M}\varphi(x).\]
By Lemmas \ref{ubd} and \ref{Q}, there exists $s>0$ such that
\[T^+_su_\delta\leq Q\leq \varphi\leq u_-.\]
By Proposition \ref{T-T+},
\[T^-_tu_\delta\leq T^-_t\circ T^-_s\circ T^+_su_\delta\leq T^-_{t+s}\varphi\leq T^-_{t+s}u_-=u_-.\]
Letting $t\to+\infty$, we conclude that
\[\lim_{t\to+\infty}T^-_t\varphi=u_-\quad \text{uniformly}.\]
\end{proof}

\begin{lem}\label{uni}
$u_-$ is the unique solution of \eqref{se}.
\end{lem}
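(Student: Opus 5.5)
Let $v_-$ be any solution of \eqref{se}; we want $v_-=u_-$, and I would prove the two inequalities $v_-\le u_-$ and $v_-\ge u_-$ separately. Since (A5)$_1$ is required for every subsolution, it applies in particular to $v_-$. So every conclusion obtained so far for $u_-$ (Theorem \ref{thm1}, \eqref{limudelta}, \eqref{thm31}, Lemmas \ref{ubd} and \ref{<u-}) holds with $u_-$ replaced by $v_-$. In the lower direction, Lemma \ref{<u-} applied to $v_-$ gives $\lim_{t\to+\infty}T^-_t\varphi=v_-$ uniformly for every $\varphi\le v_-$.

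\textbf{Step 1: $v_-\le u_-$ and $u_-\le v_-$ on the min.} Take $\varphi:=\min\{u_-,v_-\}\in C(M)$. Then $\varphi\le u_-$, so Lemma \ref{<u-} gives $T^-_t\varphi\to u_-$. Also $\varphi\le v_-$, so the same lemma for $v_-$ gives $T^-_t\varphi\to v_-$. Uniqueness of uniform limits yields $u_-=v_-$. This already finishes uniqueness. The only point to verify is that Lemmas \ref{ubd} and \ref{<u-} used nothing about $u_-$ beyond its being a solution to which (A3)$_1$ applies. That is exactly what (A5)$_1$ provides for any subsolution, and hence for any solution.

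\textbf{Step 2 (if needed, for global stability).} For a later claim one would treat $\varphi\ge u_-$ symmetrically. I would use Lemma \ref{Q2} together with an analogue of Lemma \ref{ubd}: $T^-_tu^\delta$ is unbounded above unless it converges to another solution, and that alternative is now excluded by uniqueness. Then sandwich a general $\varphi$ between $\min\{\varphi,u_-\}$ and $\max\{\varphi,u_-\}$. For the present lemma, however, Step 1 suffices.

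The main obstacle is the bookkeeping in Step 1. One must check that the proof of Lemma \ref{ubd}, Case 2, invoked Theorem \ref{thm1} for another solution through (A3)$_1$, and that (A5)$_1$ supplies this for both $u_-$ and $v_-$. One must also check that the constant $\delta$ in Lemma \ref{<u-} may depend on the solution under consideration, which is harmless here.
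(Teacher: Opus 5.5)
Your argument is correct and is essentially the paper's: the paper also relies on Lemma \ref{<u-} holding for an arbitrary solution, which is justified by (A5)$_1$. It applies that lemma to the other solution in its case $w_-\ge u_-$, and in the mixed case it uses $\min\{u_-,w_-\}$ together with the monotonicity bound $T^-_t\min\{u_-,w_-\}\le w_-$, whereas your single application of Lemma \ref{<u-} to both solutions with $\varphi=\min\{u_-,v_-\}$ merges the paper's three cases into one (Step 2 is not needed for this lemma).
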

\begin{proof}
Assume that there is another solution $w_-$ of \eqref{se}.

\medskip

\noindent {\bf Case 1.} If $w_-\leq u_-$, by Lemma \ref{<u-},
\[\lim_{t\to+\infty}T^-_tw_-=u_-,\]
which implies $w_-=u_-$.

\medskip

\noindent {\bf Case 2.} If $w_-\geq u_-$, since $u_-$ in Lemma \ref{<u-} is an arbitrary solution of \eqref{se}, we obtain
\[\lim_{t\to+\infty}T^-_tu_-=w_-,\]
which implies $u_-=w_-$.

\medskip

\noindent {\bf Case 3.} If there exist $x_1,x_2\in M$ such that
\[w_-(x_1)<u_-(x_1)\quad \text{and} \quad w_-(x_2)>u_-(x_2),\]
we define
\[\ol u(x):=\min\{u_-(x),w_-(x)\},\quad x\in M.\]
By Lemma \ref{<u-},
\begin{equation}\label{thm38}
\lim_{t\to+\infty}T^-_t\ol u=u_-\quad \text{uniformly}.
\end{equation}
Since $w_-(x_1)<u_-(x_1)$, there is $\eta>0$ such that $w_-(x_1)<u_-(x_1)-\eta$. Hence,
\[T^-_t\ol u(x_1)\leq T^-_t w_-(x_1)=w_-(x_1)<u_-(x_1)-\eta,\]
which contradicts \eqref{thm38}.
\end{proof}

We point out that the proof of Lemma \ref{uni} does not rely on semiconcavity, in contrast to \cite{RWY}. This is because the weaker regularity assumptions on $H(x,p,u)$ in our setting do not guarantee such estimates; see, e.g., \cite{CS}.

\begin{lem}\label{>u-}
For any $\varphi\in C(M)$ with $\varphi\geq u_-$, we have $\lim_{t\to+\infty}T^-_t\varphi=u_-$ uniformly.
\end{lem}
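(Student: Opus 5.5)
We mirror the proof of Lemma \ref{<u-}, exchanging the roles of $T^-_t$ and $T^+_t$ and of sub/super-level comparisons. The symmetric input is the upper perturbation $u^\delta=u_-+\delta$, for which \eqref{limudelta} gives $T^-_tu^\delta\to u_-$ uniformly. Hence there is $\tau>0$ with $u^\delta\ge T^-_tu^\delta\ge u_-$ for $t\ge\tau$. By Proposition \ref{T-T+} (the inequality $\psi\le T^-_t\circ T^+_t\psi$ applied to suitable $\psi$), we aim for the analogue of \eqref{thm31}:
\[
T^+_t u^\delta \geq u^\delta > u_- \quad\text{for } t\ge \tau .
\]
This sign is not immediate from $T^+_t\circ T^-_t\le \mathrm{id}$ alone. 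I would first try monotonicity: $T^+_t u^\delta\ge T^+_tu_-$, and $T^+_tu_-$ decreases to $u_+\le u_-$ by Proposition \ref{u-u+}, which is the wrong direction. So instead I would argue directly that $T^+_tu^\delta$ is unbounded from above, which is what is actually needed.

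\textbf{Step 1 (analogue of Lemma \ref{ubd}).} Suppose $T^+_tu^\delta$ is bounded above. Since $T^+_tu^\delta\ge T^+_tu_-\ge u_+$, it is bounded, and Lemma \ref{Mt} (using (A1)') gives a forward weak KAM solution $v_+=\lim_t\sup_{s\ge t}T^+_su^\delta$. Then $v_-=\lim_tT^-_tv_+$ exists and is a solution of \eqref{se} by Proposition \ref{u-u+}. By Lemma \ref{uni}, $v_-=u_-$.

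Now I need $v_+>u_-$ somewhere in a usable form to reach the Case 1 contradiction. This is the main obstacle, because the sign information \eqref{thm31} came from $T^-_tu_\delta\le u_-$, whose mirror is not available. My fallback is to bypass forward semigroups entirely, as follows.

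\textbf{Step 2 (direct route).} Given $\varphi\ge u_-$, set $K=\|\varphi-u_-\|_\infty$, so that $u_-\le\varphi\le u_-+K$. By monotonicity it suffices to show $T^-_t(u_-+K)\to u_-$. The family $T^-_t(u_-+K)$ is bounded below by $u_-$.

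First, if $T^-_t(u_-+K)$ were unbounded above, Lemma \ref{Q2} would give $s$ with $T^-_s(u_-+K)\ge Q$ for all $Q$. Combining this with the symmetric statement for a forward solution obtained from Proposition \ref{u-u+}, the goal is a contradiction with (A5); this is the hardest point, and I would attempt it by comparing with the unique $u_-$ through Proposition \ref{T-T+}.

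Once boundedness holds, Lemma \ref{Mt-} shows that $\inf_{s\ge t}T^-_s(u_-+K)$ converges uniformly to a solution, which must equal $u_-$ by Lemma \ref{uni}. By Proposition \ref{limsup}, every lower relaxed limit is also $u_-$. For the upper limit, I would use equi-Lipschitz bounds (Proposition \ref{equi}) to extract uniform subsequential limits of $T^-_{t_n}(u_-+K)$, all lying above $u_-$. Each such limit $\psi$ would then be driven into the $\Delta$-neighborhood of Theorem \ref{thm1}, which yields exponential convergence to $u_-$.

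To reach that neighborhood, I would show that $\omega$-limit points $\psi\ge u_-$ satisfy $\min(\psi-u_-)=0$ and then apply (A5)$_2$ to the subsolution $\min\psi$-type functions, which yields uniform contraction.
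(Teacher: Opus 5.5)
Your proposal has a genuine gap, and it sits where the proof does its real work: ruling out that $T^-_t\varphi$ is unbounded from above. Step 2 only says this should contradict (A5) ``by comparing with $u_-$ through Proposition \ref{T-T+}''. That is not an argument.

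Step 1 cannot fill the gap, even though you can finish it. If $T^+_tu^\delta\le u_-+\delta/2$, then $u^\delta\le T^-_t(u_-+\delta/2)<u^\delta$ for large $t$. Hence $v_+\ge u_-+\delta/2$ somewhere, contradicting $v_+\le v_-=u_-$. The problem is that Proposition \ref{T-T+} gives an upper bound on a backward orbit only through $\varphi\le T^-_\tau\circ T^+_\tau\varphi$, that is, by pushing $\varphi$ itself forward and comparing $T^+_\tau\varphi$ with a function whose backward orbit is controlled. Information such as $T^+_su^\delta\ge\varphi$ yields nothing.

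This is the paper's mechanism, and nothing in your proposal plays its role:
\begin{itemize}
\item Put $\phi=\varphi+\delta$. Lemma \ref{Q2} gives $\sigma$ with $T^-_\sigma\phi>\phi$, hence $T^+_\sigma\phi\le T^+_\sigma\circ T^-_\sigma\phi\le\phi$.
\item Iterating, $T^+_t\phi$ is bounded above. It is bounded below by $T^+_tu_-\ge u_+$, so Lemma \ref{Mt} gives a forward weak KAM solution $\phi_+$.
\item If $T^+_\tau\phi\le u^\delta$ for some $\tau$, then $\phi\le T^-_\tau u^\delta$. So $T^-_t\phi\le T^-_{t+\tau}u^\delta$ is bounded by \eqref{limudelta}, a contradiction.
\item Otherwise $\phi_+(x_0)\ge u^\delta(x_0)$ at some $x_0$. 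Then $\lim_t T^-_t\phi_+\ge\phi_+$ is a solution (Proposition \ref{u-u+}) different from $u_-$, contradicting Lemma \ref{uni}.
\end{itemize}

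The bounded case is not closed either. Knowing $\min(\psi-u_-)=0$ for $\omega$-limit points does not place any of them in the $\Delta$-ball of Theorem \ref{thm1}. Moreover, (A5)$_2$ gives only local stability at solutions, not a contraction from far away, so ``apply (A5)$_2$ to $\min\psi$-type functions'' does nothing. Your instinct that an extra step is needed is right, though. Uniform convergence of $\inf_{s\ge t}T^-_s\varphi$ to $u_-$ (Lemma \ref{Mt-} plus Lemma \ref{uni}) does not by itself give a single $t_0$ with $T^-_{t_0}\varphi\le u^\delta$ on all of $M$; the paper asserts this in one line.

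One way to close it uses the unbounded-case result above:
\begin{itemize}
\item Let $\bar\psi:=\limsup_{t\to+\infty}T^-_t\varphi$. It is Lipschitz by Proposition \ref{equi}.
\item Compactness of the orbit together with Proposition \ref{Tprop} gives $\bar\psi\le T^-_r\bar\psi$ for all $r>0$.
\item Since $\bar\psi\ge u_-$, the previous argument shows $T^-_r\bar\psi$ is bounded. Hence it increases uniformly to a fixed point, which is $u_-$ by Lemma \ref{uni}.
\item Thus $u_-\le T^-_t\varphi$ and $\limsup_t T^-_t\varphi\le u_-$, and equi-Lipschitz continuity upgrades this to uniform convergence.
\end{itemize}
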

\begin{proof}
We distinguish two cases.

\medskip

\noindent {\bf Case 1.} $T^-_t\varphi(x)$ is bounded from above on $M\times [0,+\infty)$. By $T^-_t\varphi\geq u_-$, $T^-_t\varphi(x)$ is also bounded from below. Then by Lemma \ref{Mt-}, the limit
\[\varphi_\infty:=\lim_{t\to+\infty}\inf_{s\geq t}T^-_s\varphi(x)\]
exists, and is a solution of \eqref{se}. By Lemma \ref{uni}, $\varphi_\infty=u_-$. Thus, since $u_-<u^\delta$, there is $t_0>0$ such that
\[u_-\leq T^-_{t_0}\varphi\leq u^\delta.\]
By \eqref{limudelta}, we deduce that
\[\lim_{t\to+\infty}T^-_t\varphi=u_-\quad \text{uniformly}.\]

\medskip

\noindent {\bf Case 2.} $T^-_t\varphi(x)$ is unbounded from above on $M\times [0,+\infty)$. We show that this case cannot occur. Let
\[\phi:=\varphi+\delta\geq u_-+\delta=u^\delta>u_-.\]
By Proposition \ref{Tprop} (1), $T^-_t\phi(x)$ is unbounded from above on $M\times [0,+\infty)$. Let
\[N:=\sup_{x\in M}\phi(x)+1.\]
By Lemma \ref{Q2}, there is $\sigma>0$ such that
\[T^-_\sigma \phi(x)\geq N\quad \text{for all }x\in M,\]
which implies $T^-_\sigma\phi>\phi$. By Proposition \ref{T-T+},
\[T^+_\sigma\phi\leq T^+_\sigma \circ T^-_\sigma\phi\leq \phi.\]
Recalling that $(x,t)\mapsto T^+_t\phi(x)$ is continuous, there is a constant $\ol N>0$ depending on $\sigma$ such that
\[T^+_s\phi(x)<\ol N\quad \text{for all }x\in M\text{ and for all }s\in [0,\sigma].\]
For any $t>0$, we write
\[t=n\sigma+s,\quad n\in \N,\ s\in [0,\sigma].\]
It follows that
\begin{equation}\label{olN}
T^+_t\phi=T^+_{(n-1)\sigma+s}\circ T^+_\sigma\phi\leq T^+_{(n-1)\sigma+s}\phi\leq \dots\leq T^+_s\phi\leq \ol N.
\end{equation}
By Proposition \ref{u-u+},
\[u_+:=\lim_{t\to+\infty}T^+_tu_-\]
is a forward weak KAM solution of \eqref{se} and
\[T^+_t\phi\geq T^+_tu_-\geq u_+\quad \text{for all }t>0.\]
Recalling \eqref{olN}, $T^+_t\phi$ is bounded on $M\times [0,+\infty)$. By Lemma \ref{Mt},
\[\phi_+:=\lim_{t\to+\infty}\sup_{s\geq t}T^+_s\phi(x)\]
is a forward weak KAM solution of \eqref{se}. We now consider two subcases.

\medskip

\noindent {\bf Case 2.1.} $T^+_\tau\phi\leq u^\delta$ for some $\tau>0$. Then by Proposition \ref{T-T+},
\[T^-_\tau u^\delta\geq T^-_\tau\circ T^+_\tau \phi\geq \phi.\]
Thus,
\[T^-_t\phi\leq T^-_{t+\tau}u^\delta,\]
where the right-hand side is bounded according to \eqref{limudelta}, which contradicts that $T^-_t\phi$ is unbounded from above.

\medskip

\noindent {\bf Case 2.2.} For any $t>0$, there exists $x_t\in M$ such that $T^+_t\phi(x_t)>u^\delta(x_t)$. Then we can find sequences $\{t_n\}$ with $t_n\to +\infty$ and $\{x_n\}$ such that
\[T^+_{t_n}\phi(x_n)>u^\delta(x_n),\quad x_n\to x_0,\quad \lim_{n\to+\infty}\sup_{s\geq t_n}T^+_s\phi=\phi_+\quad \text{uniformly}.\]
It follows that $\phi_+(x_0)\geq u^\delta(x_0)$. By Proposition \ref{u-u+},
\[\phi_-:=\lim_{t\to+\infty}T^-_t\phi_+\]
is a solution of \eqref{se}, and $\phi_-\geq \phi_+$. By Lemma \ref{uni}, $\phi_-=u_-$. Hence,
\[\phi_-(x_0)\geq \phi_+(x_0)\geq u^\delta(x_0)>u_-(x_0),\]
which yields a contradiction.
\end{proof}
\begin{lem}\label{glo}
For any $\varphi\in C(M)$, we have $\lim_{t\to+\infty}T^-_t\varphi=u_-$ uniformly.
\end{lem}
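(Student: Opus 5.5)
To prove Lemma \ref{glo}, I would sandwich an arbitrary $\varphi\in C(M)$ between two initial data that are already handled by Lemmas \ref{<u-} and \ref{>u-}, and then use the monotonicity of the semigroup in Proposition \ref{Tprop} (1). Both of those lemmas treat one-sided data relative to $u_-$, so the natural choice is
\[\varphi_1:=\min\{\varphi,u_-\},\qquad \varphi_2:=\max\{\varphi,u_-\}.\]
Both functions are continuous and satisfy $\varphi_1\leq u_-\leq\varphi_2$ and $\varphi_1\leq\varphi\leq\varphi_2$.

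By Lemma \ref{<u-} applied to $\varphi_1$, we have $T^-_t\varphi_1\to u_-$ uniformly. By Lemma \ref{>u-} applied to $\varphi_2$, we have $T^-_t\varphi_2\to u_-$ uniformly.

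Proposition \ref{Tprop} (1) is stated for strict inequalities, whereas the sandwich here only gives non-strict ones. There are two ways to handle this. The first is to observe that the order-preserving property for non-strict inequalities holds as well; it follows from the comparison principle for \eqref{ee}, or from (1) applied to $\varphi_1-\eta<\varphi<\varphi_2+\eta$ combined with the Lipschitz estimate (2) as $\eta\to0$. The second is to avoid the issue by working with $\varphi_1-\eta$ and $\varphi_2+\eta$ directly. These still satisfy $\varphi_1-\eta\leq u_-\leq\varphi_2+\eta$, so Lemmas \ref{<u-} and \ref{>u-} apply to them as well, and the strict inequalities $\varphi_1-\eta<\varphi<\varphi_2+\eta$ then give
\[T^-_t(\varphi_1-\eta)<T^-_t\varphi<T^-_t(\varphi_2+\eta)\quad\text{for all }t>0.\]

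Both outer terms converge uniformly to $u_-$, so the squeeze yields $\lim_{t\to+\infty}T^-_t\varphi=u_-$ uniformly. No step here is a real obstacle; all the heavy work was done in Lemmas \ref{<u-}, \ref{uni} and \ref{>u-}. This lemma then yields the global asymptotic stability asserted in Theorem \ref{thm3}.
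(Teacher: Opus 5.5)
Your proposal is correct and is essentially the paper's proof. The paper also sandwiches $\varphi$ between $\beta=\min\{\varphi,u_-\}$ and $\alpha=\max\{\varphi,u_-\}$, applies monotonicity of $T^-_t$, and invokes Lemmas \ref{<u-} and \ref{>u-}. Your extra remark about passing from the strict order-preservation in Proposition \ref{Tprop}~(1) to non-strict inequalities is a harmless refinement that the paper leaves implicit.
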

\begin{proof}
It remains to consider the case where $\varphi(x_1)>u_-(x_1)$ and $\varphi(x_2)<u_-(x_2)$ for some points $x_1,x_2\in M$. Let
\[\alpha(x):=\max\{\varphi(x),u_-(x)\},\quad \beta(x):=\min\{\varphi(x),u_-(x)\}.\]
It is direct to see that $\beta\leq u_-\leq \alpha$ and $\beta\leq \varphi\leq \alpha$. Then
\[T^-_t\beta\leq T^-_t\varphi\leq T^-_t\alpha\quad \text{for all }t>0.\]
By Lemmas \ref{<u-} and \ref{>u-}, we conclude
\[\lim_{t\to+\infty}T^-_t\varphi=u_-\quad \text{uniformly}.\]
\end{proof}

\subsection{Proof of Corollary \ref{a>0A}}
\begin{lem}\label{lemv}\cite[Theorem 6.2]{FS}.
There exists a subsolution $v$ of \eqref{G=c} which is both strict and of class $C^\infty$ in $M\backslash \mathcal A$, meaning that
\[G(x,Dv)<c(G) \quad \text{for every }x\in M\backslash \mathcal A.\]
\end{lem}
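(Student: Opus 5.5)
The plan follows the classical Fathi--Siconolfi construction, which has two stages. First we build a continuous subsolution of \eqref{G=c} that is strict at every point of $M\backslash\mathcal A$. Then we smooth it on $M\backslash\mathcal A$ without destroying the subsolution property. Throughout, write $c=c(G)$ and use the modification of $G$ outside a large ball, as at the start of Section \ref{Sec2}. This makes $L$ superlinear and all subsolutions equi-Lipschitz with a common constant $\kappa$.

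\textbf{Step 1: local strict subsolutions.} For $y\in M$, the function $h(y,\cdot)$ is a (Lipschitz) subsolution of \eqref{G=c}, and it is a solution on $M\backslash\{y\}$. If $y\notin\mathcal A$, then $h(y,y)>0$. We claim there is a subsolution $u_y$ and a radius $r_y>0$ such that $G(x,Du_y)\le c-\eta_y$ a.e.\ on $B(y,r_y)$, for some $\eta_y>0$.

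To prove the claim, argue by contradiction. If no subsolution were strict near $y$, then one could produce a curve through $y$ of arbitrarily long duration with action $h_t(y,y)\to 0$ along a subsequence. Such a curve is obtained by concatenating near-calibrating curves, using that $h(y,\cdot)$ fails the supersolution test at $y$ only if $y$ is not in $\mathcal A$. This would give $h(y,y)=0$, a contradiction.

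Equivalently, and more directly: there exist $\eta>0$ and $r>0$ such that every curve leaving $B(y,r)$ and returning to $y$ has $L+c$-action bounded below by a positive constant. We then set
\[u_y(x):=\inf_{\gamma}\int (L+c-\eta\,\chi_{B(y,r)})\,ds\]
over curves from $y$ to $x$. This is a subsolution of $G\le c-\eta\chi_{B(y,r)}$, provided this modified Lagrangian still has a finite barrier, which the choice of $\eta,r$ ensures.

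\textbf{Step 2: gluing by convexity.} Since $M\backslash\mathcal A$ is $\sigma$-compact, choose countably many balls $B(y_n,r_{y_n}/2)$ covering it. Normalize $u_n:=u_{y_n}-u_{y_n}(x_0)$, so that $\|u_n\|_\infty\le\kappa\,\diam(M)$. Set
\[w:=\sum_n 2^{-n}u_n.\]
The series converges uniformly, so $w$ is $\kappa$-Lipschitz. By convexity of $G(x,\cdot)$ (applied a.e.\ to the gradients), $G(x,Dw)\le c-2^{-n}\eta_{y_n}$ a.e.\ on $B(y_n,r_{y_n}/2)$. Hence $w$ is a subsolution that is strict on every compact subset of $M\backslash\mathcal A$, with a locally positive margin $\eta(x)$.

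\textbf{Step 3: smoothing on $M\backslash\mathcal A$.} This is the step I expect to be the main obstacle. Take a locally finite partition of unity $\{\psi_k\}$ subordinate to relatively compact charts exhausting $M\backslash\mathcal A$. Let $w_k$ be the mollification of $w$ in chart $k$ with radius $\rho_k$, and set
\[v:=w\ \text{ on }\mathcal A,\qquad v:=\sum_k\psi_k w_k\ \text{ on }M\backslash\mathcal A.\]
Then
\[Dv=\sum_k\psi_k Dw_k+\sum_k(w_k-w)D\psi_k .\]
By Jensen's inequality and uniform continuity of $G$ on $M\times\{|p|\le\kappa+1\}$, we get $G(x,\sum_k\psi_kDw_k)\le c-\eta/2$ on $\operatorname{supp}\psi_k$, once $\rho_k$ is small. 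The second sum is bounded by $\sum_k\|w_k-w\|_\infty|D\psi_k|\le\sum_k\kappa\rho_k|D\psi_k|$. Choosing $\rho_k$ small enough relative to $\sup|D\psi_k|$ and to the local margin makes this perturbation cost less than $\eta/4$ in $G$. Then $v$ is $C^\infty$ on $M\backslash\mathcal A$ and satisfies $G(x,Dv)<c$ there.

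Since also $\rho_k\to0$ and $|v-w|\le\sum_k\kappa\rho_k\to0$ near $\mathcal A$, $v$ is continuous and equals $w$ on $\mathcal A$. Finally, $v$ is a viscosity subsolution on all of $M$, which we verify in two ways. It is Lipschitz and satisfies $G(x,Dv)\le c$ a.e., since $\mathcal A$ carries no extra constraint and $v$ is an a.e.\ subsolution away from $\mathcal A$. We also check that $v$ is Lipschitz across $\mathcal A$, with constant at most $\kappa+1$ by the same estimates. Convexity then upgrades the a.e.\ inequality to the viscosity sense.
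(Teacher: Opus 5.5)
You have the same three-step architecture as the result the paper imports from Fathi--Siconolfi: a subsolution strict near each $y\notin\mathcal A$, then a countable convex combination, then smoothing on the open set $M\backslash\mathcal A$. Steps 2 and 3 are essentially sound. The gap is Step 1. It is the only place where $y\notin\mathcal A$ is used, so it carries the real content of the lemma, and neither of your two arguments establishes it.

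In the contradiction sketch the object is wrong. The Peierls barrier $h(y,\cdot)$ is a viscosity solution on all of $M$, including at $y$, so it never fails the supersolution test. The function that is a solution only on $M\backslash\{y\}$, and fails at $y$ exactly when $y\notin\mathcal A$, is the Ma\~n\'e potential $S_y(x)$: the infimum of the $(L+c)$-action over curves from $y$ to $x$ of arbitrary duration. Even with $S_y$, you do not explain how ``no subsolution is strict near $y$'' produces loops of vanishing action.

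The ``direct'' construction is circular. Finiteness of $\inf_\gamma\int(L+c-\eta\chi_{B(y,r)})\,ds$ amounts to $\int_\gamma(L+c)\,ds\ge\eta\,|\{s:\gamma(s)\in B(y,r)\}|$ for every closed curve. That is precisely the existence of a subsolution with $G\le c-\eta$ on $B(y,r)$, which is what you want to prove. Your bound $\beta$ only concerns loops based at $y$ that exit $B(y,r)$. It says nothing about closed curves that linger in $B(y,r)$ or cross it many times, where the term $-\eta\cdot(\text{time in the ball})$ can dominate.

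Your ingredients do suffice once combined with the max trick.
\begin{enumerate}
\item Since $y\notin\mathcal A$, we have $\min_pG(y,p)<c$; otherwise $L(y,0)+c\le 0$ and the constant loops at $y$ force $h(y,y)=0$. Fix a minimizer $p_0$, work in a chart, and shrink $r$ so that $G(\cdot,p_0)<c$ on $B(y,r)$.
\item By Fenchel's inequality $L(x,v)\ge\langle p_0,v\rangle-G(x,p_0)$, every curve from $y$ to $x$ staying in $B(y,r)$ has action at least $\langle p_0,x-y\rangle$.
\item A curve from $y$ to $x$ that leaves $B(y,r)$ has action at least $\beta-C|x-y|$: close it with a short segment back to $y$ and apply your loop bound. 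That bound itself still needs proof. Iterate loops of nearly zero action, whose durations are bounded below by superlinearity; this contradicts $h(y,y)>0$.
\item Hence $S_y(x)\ge\langle p_0,x-y\rangle=\phi(x)+|x-y|^2$ near $y$, where $\phi(x):=\langle p_0,x-y\rangle-|x-y|^2$. Also $G(x,D\phi)<c-\eta$ on a small ball $B(y,\rho)$.
\item Set $u_y:=\max\{S_y,\phi+\varepsilon\}$ on $B(y,\rho)$ and $u_y:=S_y$ elsewhere, with $\varepsilon<\rho^2$. This is a global subsolution that coincides with the strict $C^1$ function $\phi+\varepsilon$ on a neighborhood of $y$.
\end{enumerate}

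A smaller point in Step 3: ``$\mathcal A$ carries no extra constraint'' is not an argument, because $\mathcal A$ may have positive measure. You need that $Dv=Dw$ at density points of $\mathcal A$ where both functions are differentiable, which holds because $v=w$ on $\mathcal A$. Lipschitz continuity across $\mathcal A$ follows by splitting a minimizing geodesic at its first and last points in $\mathcal A$.
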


\begin{lem}\label{Hi}\cite[Lemma 2.15]{Da2}. 
If $w$ is a subsolution to \eqref{se}, there exists a constant $A_+>0$ such that
\[\min_{x\in M}w(x)\leq \max_{x\in M}w(x)\leq \min_{x\in M}w(x)+A_+\Big(1+\Big|\min_{x\in M}w(x)\Big|\Big).\]
\end{lem}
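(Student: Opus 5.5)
The plan is a Gronwall-type argument along a short path joining a minimum point of $w$ to a maximum point of $w$. The key input is the domination property $w\prec L_H$ for subsolutions, combined with the fact that $\partial_u W$ is bounded, so that the $u$-dependence of $L_H$ grows at most linearly. I work in the setting of \eqref{se} with $H=G+W$ under (A1)--(A2); in the application, $W(x,u)=a(x)u-c(G)$.

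\textbf{Step 1 (Lipschitz bound and domination).} Let $w$ be a continuous subsolution. Since $w$ is bounded, $G(x,Dw)\le -W(x,w(x))\le C$ in the viscosity sense. Coercivity (A1) then makes $w$ Lipschitz. By convexity of $G$ in $p$, $w$ is an a.e. subsolution. The standard argument gives, for every absolutely continuous $\gamma:[a,b]\to M$,
\[
w(\gamma(b))-w(\gamma(a))\le \int_a^b L_H(\gamma,\dot\gamma,w(\gamma))\,ds .
\]
This argument regularizes $w$ in charts, or uses Fenchel's inequality $\langle Dw,\dot\gamma\rangle\le L+H$ at points of differentiability after a perturbation of the curve. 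Here $L_H(x,v,u)=L(x,v)-W(x,u)$.

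\textbf{Step 2 (a ball of admissible velocities where $L$ is bounded).} Since $G$ is only coercive, $L$ may equal $+\infty$, so I need a speed $r>0$ with $L$ bounded on $\{|v|\le r\}$. Convexity together with $\inf_x G(x,p)\to+\infty$ gives a uniform linear lower bound $G(x,p)\ge r|p|-C_1$. To see this, take $R$ with $G\ge \max_{x,|p|\le1}G+1$ on $\{|p|=R\}$ and use convexity along rays from $0$. Consequently
\[
L(x,v)\le C_1 \quad\text{for } |v|\le r .
\]
Set $C_W:=\max_x|W(x,0)|$. Then (A2) gives $-W(x,u)\le C_W+\Lambda|u|$.

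\textbf{Step 3 (Gronwall).} Let $x_0$ be a minimum point and $x_1$ a maximum point of $w$, and set $m=\min w$. Join $x_0$ to $x_1$ by a geodesic $\gamma$ of constant speed $\le r$, defined on $[0,T]$ with $T=\diam(M)/r$. Put $g(s):=w(\gamma(s))-m\ge0$, so that $|w(\gamma(s))|\le |m|+g(s)$. Step 1 applied on $[0,s]$ gives
\[
g(s)\le \int_0^s\big(C_1+C_W+\Lambda|m|+\Lambda g(\tau)\big)\,d\tau .
\]
Gronwall then yields
\[
\max w-\min w=g(T)\le T\,(C_1+C_W+\Lambda|m|)\,e^{\Lambda T}.
\]
This is the claim with $A_+:=T e^{\Lambda T}\max\{C_1+C_W,\Lambda\}$, which depends only on $G$, $W$ and $M$ and not on $w$. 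The left inequality $\min w\le\max w$ is trivial.

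\textbf{Main obstacle.} The delicate points are Step 2 and the justification of domination in Step 1 for a merely continuous convex $G$ whose Lagrangian may be infinite. The device of restricting to speeds at most $r$ avoids any superlinearity assumption, so the resulting constant is linear in $|\min w|$ rather than governed by a possibly nonlinear Lipschitz bound $K(|w|)$.
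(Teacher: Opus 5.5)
Your proof is correct. The paper itself gives no argument for this lemma; it is quoted from \cite[Lemma 2.15]{Da2}. So yours is a genuinely different, self-contained route.

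What your route buys is an explicit constant, $A_+=Te^{\Lambda T}\max\{C_1+C_W,\Lambda\}$ with $T=\diam(M)/r$. It depends only on the linear-growth constants $(r,C_1)$ of $G$, on $\Lambda$ and $\max_x|W(x,0)|$, and on $\diam(M)$. In particular it is uniform over all subsolutions $w$, which is what is needed when the lemma is applied to every subsolution of \eqref{au} in the proof of Corollary \ref{a>0A}.

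Your argument is the same geodesic-plus-Gronwall device the paper uses in Lemma \ref{Q}, with one improvement. There the geodesic runs on $[0,1]$, and the bound $C_L=\max\{|L(x,v)|:|v|\le\diam(M)\}<+\infty$ relies on the superlinearity in (A1)'. Your Step 2 (convexity plus uniform coercivity give $G\ge r|p|-C_1$, so $L\le C_1$ on $\{|v|\le r\}$, and you slow the geodesic down accordingly) works under (A1) alone.

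The step you flag as delicate is milder than it looks. Along your curve $L$ is finite, so only the standard domination property of Lipschitz a.e.\ subsolutions is used. Alternatively, you can bypass $L$ entirely in dual form. At points of differentiability $r|Dw|\le C_1+C_W+\Lambda|w|$, so with $k:=C_1+C_W+\Lambda|\min w|$ (taking $C_1>0$) the function $\ln\bigl(k+\Lambda(w-\min w)\bigr)$ has gradient bounded by $\Lambda/r$ a.e. It is therefore $(\Lambda/r)$-Lipschitz, which gives $\max w-\min w\le \frac{k}{\Lambda}\bigl(e^{\Lambda\diam(M)/r}-1\bigr)$ directly.
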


\medskip

Let $u_0$ be a solution of \eqref{G=c}. Since $a(x)\geq 0$, the functions
\[u^+_0:=u_0+\|u_0\|_\infty\quad \text{and}\quad u^-_0:=u_0-\|u_0\|_\infty\]
are, respectively, a supersolution and a subsolution of \eqref{au}, and satisfy $u^-_0\leq u^+_0$. By Perron's method, there exists a solution $u_-$ of \eqref{au}. Define \[a_0:=\min_{x\in\mathcal A}a(x)>0.\] For a.e. $x\in \mathcal A$ and for all subsolution $w$ of \eqref{au}, we have
\[w(x)\leq \frac{c(G)-G(x,Dw)}{a(x)}\leq \frac{c(G)+|\min_{(x,p)\in T^*M}G(x,p)|}{a_0}.\]
By Lemma \ref{Hi}, $w(x)$ is bounded from above by a constant $C_0>0$. Let $\zeta=C_0+1$. We take a subsolution $v$ of \eqref{G=c} as given in Lemma \ref{lemv}. For $x\in \mathcal A$,
\[a(x)w(x)+G(x,Dv)-c(G)-(C_0+1)a(x)\leq a(x)(w(x)-C_0-1)<0,\]
where for the first inequality, we used $G(x,Dv)\leq c(G)$, and for the second inequality, we used $w(x)<C_0+1$ and $a(x)>0$. For $x\notin\mathcal A$,
\[a(x)w(x)+G(x,Dv)-c(G)-(C_0+1)a(x)\leq G(x,Dv)-c(G)<0,\]
where for the first inequality, we used $a(x)\geq 0$ and $w(x)<C_0+1$, and for the second inequality, we used $G(x,Dv)<c(G)$. Then (A5)$_2$ holds. By Theorem \ref{thm3}, $u_-$ is the unique solution of \eqref{au} and is globally asymptotically stable.

\medskip

The proof is now complete.

\subsection{Proof of Proposition \ref{homo}}

Arguing as in \cite[Lemma 2.2]{E}, one can show
\[\partial_u \ol H(x,p,u)\geq \Lambda_1\quad \text{for a.e. }(x,p,u)\in\R^n\times\R^n\times \R.\]
It follows that the constant functions
\[-\frac{\|\ol H(x,0,0)\|_\infty}{\Lambda_1}\quad \text{and}\quad \frac{\|\ol H(x,0,0)\|_\infty}{\Lambda_1}\]
are, respectively, a subsolution and a supersolution of \eqref{olH}. By the comparison principle and Perron's method, there exists a unique solution $\ol u$ in $\BUC(\R^n)$ of \eqref{olH}. Consider
\[v^\ep(x,t)=\inf \bigg\{\ol u(\gamma(0))+\int_{0}^t L_H\Big(\gamma(s),\frac{\gamma(s)}{\ep},\dot{\gamma}(s),v^\ep(\gamma(s),s)\Big)\, ds\bigg\},\]
where the infimum is taken among absolutely continuous curves $\gamma:[0,t]\to \R^n$ with $\gamma(t)=x$. Then $v^\ep$ is the unique solution of
\begin{equation}\label{ve}
\begin{cases}
v^\ep_t+H(x,\frac{x}{\ep},Dv^\ep,v^\ep)=0\quad \text{for }(x,t)\in \R^n\times (0,\infty),
\\ v^\ep(x,0)=\ol u(x).
\end{cases}
\end{equation}

We first estimate $|v^\ep(x,t)-u^\ep(x)|$, the proof is similar to that of Theorem \ref{thm1}. Assume $v^\ep(x,t)-u^\ep(x)>0$. Let $\gamma:[-t,0]\to \R^n$ be a $u^\ep$-calibrated curve with $\gamma(0)=x$. Define
\[\ol v(s):=v^\ep(\gamma(s-t),s)-u^\ep(\gamma(s-t)),\quad s\in[0,t].\]
If there is $\sigma\in[0,t)$ such that $\ol v(\sigma)=0$, then by continuity, there exists $\sigma_0\in[\sigma,t)$ such that $\ol v(\sigma_0)=0$ and $\ol v(s)>0$ for $s\in (\sigma_0,t]$. For all $s\in (\sigma_0,t]$, we have
\begin{align*}
v^\ep(\gamma(s-t),s)\leq &v^\ep(\gamma(\sigma_0-t),\sigma_0)
\\ &+\int_{\sigma_0}^s L_H\Big(\gamma(\tau-t),\frac{\gamma(\tau-t)}{\ep}, \dot\gamma(\tau-t),v^\ep(\gamma(\tau-t),\tau)\Big)\, d\tau,
\end{align*}
and
\[u^\ep(\gamma(s-t))=u^\ep(\gamma(\sigma_0-t))+\int_{\sigma_0}^s L_H\Big(\gamma(\tau-t),\frac{\gamma(\tau-t)}{\ep},\dot\gamma(\tau-t),u^\ep(\gamma(\tau-t))\Big)\, d\tau.\]
By \eqref{L1L2}, \[\partial_u L_H(x,y,v,u)\leq -\Lambda_1<0\quad \text{for all }(x,y,v,u)\in \R^n\times\T^n\times\R^n\times\R.\]
We get $\ol v(s)\leq 0$ for $s\in (\sigma_0,t]$, which leads to a contradiction. Then $\ol v(s)>0$ for all $s\in[0,t]$. Similar to the proof of Theorem \ref{thm1}, we obtain
\begin{align*}
\dot{\ol v}(s)\leq &L_H\Big(\gamma(s-t),\frac{\gamma(s-t)}{\ep},\dot\gamma(s-t),v^\ep(\gamma(s-t),s)\Big)
\\ &-L_H\Big(\gamma(s-t),\frac{\gamma(s-t)}{\ep},\dot\gamma(s-t),u^\ep(\gamma(s-t))\Big)\leq -\Lambda_1 \ol v(s)\quad \text{for a.e. }s\in [0,t].
\end{align*}
By the comparison principle, we get
\[v^\ep(x,t)-u^\ep(x)\leq e^{-\Lambda_1 t}(\ol u(\gamma(-t))-u^\ep(\gamma(-t))\leq e^{-\Lambda_1 t}\|\ol u-u^\ep\|_\infty.\]
A similar argument gives the lower bound. Hence,
\[|v^\ep(x,t)-u^\ep(x)|\leq e^{-\Lambda_1 t}\|\ol u-u^\ep\|_\infty\quad \text{for all }(x,t)\in \R^n\times(0,+\infty).\]

Now we estimate $|v^\ep(x,t)-\ol u(x)|$. Note that $\ol v(x,t):=\ol u(x)$ solves
\begin{equation*}
\begin{cases}
\ol v_t+\ol H(x,D\ol v,\ol v)=0\quad \text{for }(x,t)\in \R^n\times (0,\infty),
\\ \ol v(x,0)=\ol u(x).
\end{cases}
\end{equation*}
By \cite[Theorem 1.1]{MN}, there is a constant $C(t)>0$ depending on $t$  and $H$ such that
\[|v^\ep(x,t)-\ol u(x)|\leq C(t)\sqrt{\ep}\quad \text{for all }x\in\R^n.\]
Combining the above estimates, we obtain that for all $x\in \R^n$,
\begin{align*}
|u^\ep(x)-\ol u(x)|&\leq |u^\ep(x)-v^\ep(x,t)|+|v^\ep(x,t)-\ol u(x)|
\\ &\leq e^{-\Lambda_1 t}\|\ol u-u^\ep\|_\infty+C(t)\sqrt{\ep}.
\end{align*}
Choosing $t=\frac{\log 2}{\Lambda_1}$, we conclude
\[\|u^\ep-\ol u\|_\infty\leq 2C\Big(\frac{\log 2}{\Lambda_1}\Big)\sqrt{\ep},\]
which completes the proof.

\section*{Acknowledgements}

The authors would like to thank Professor Hiroyoshi Mitake and Professor Hung V. Tran for helpful comments and suggestions. Panrui Ni is supported by the JSPS grant: KAKENHI \# 26KF0103 and by the National Natural Science Foundation of China (Grant No. 12571197). Jun Yan is supported by the National Natural Science Foundation of China (Grant Nos. 12171096, 12571197, 12231010).

\section*{Declarations}

\noindent {\bf Conflict of interest statement:} The authors state that there is no conflict of interest.

\medskip

\noindent {\bf Data availability statement:} Data sharing not applicable to this article as no datasets were generated or analysed during the current study.


\end{document}